 \numberwithin{equation}{section}
\theoremstyle{plain} % just in case the style had changed
\newcommand{\thistheoremname}{}
\newtheorem*{genericthm*}{\thistheoremname}
\newenvironment{namedthm*}[1]
  {\renewcommand{\thistheoremname}{#1}%
   \begin{genericthm*}}
  {\end{genericthm*}}
\begin{document}

%-------------------------------------------------------------------    Tikz Set     ---------------------------------------------------------------------%
\tikzset{cross/.style={cross out, draw=black, minimum size=2*(#1-\pgflinewidth), inner sep=0pt, outer sep=0pt},
cross/.default={3.5pt}}
\tikzset{crossb/.style={cross out, draw=blue, minimum size=2*(#1-\pgflinewidth), inner sep=0pt, outer sep=0pt},
crossb/.default={3.5pt}}
%-------------------------------------------------------------------------------------------------------------------------------------------------------%

%-------------------------------------------------------------------    Title Page    ---------------------------------------------------------------------%

\author[Hansol Hong]{Hansol Hong}
\address{Department of Mathematics \\ Yonsei University \\ 50 Yonsei-Ro \\ Seodaemun-Gu \\ Seoul 03722 \\ Korea} 
\email{hansolhong@yonsei.ac.kr}

\author[Hyunbin Kim]{Hyunbin Kim}
\address{Department of Mathematics \\ Yonsei University \\ 50 Yonsei-Ro \\ Seodaemun-Gu \\ Seoul 03722 \\ Korea} 
\email{hyunbinkim@yonsei.ac.kr}

\title[Morse superpotentials and blow-ups of surfaces]{Morse superpotentials and blowups of surfaces}

\begin{abstract}
We study the Landau-Ginzburg mirror of toric/non-toric blowups of (possibly non-Fano) toric surfaces arising from SYZ mirror symmetry. Through the framework of tropical geometry, we provide an effective method for identifying the precise locations of critical points of the superpotential, and further show their non-degeneracy for generic parameters. Moreover, we prove that the number of geometric critical points equals the rank of cohomology of the surface, which leads to its closed-string mirror symmetry due to Bayer's earlier result.  
 \end{abstract}
%\subjclass{...}
%\keywords{.......}
\maketitle

% Table of Contents
\setcounter{tocdepth}{1}
\tableofcontents

%-------------------------------------------------------------------------------------------------------------------------------------------------------%

%-------------------------------------------------------------------------------------------------------------------------------------------------------%
%-------------------------------------------------------------  ⤋ ⤋ ⤋   Section 1  ⤋ ⤋ ⤋   ----------------------------------------------------------------%

\section{Introduction}

There have been extensive studies on the effects of birational changes of the space on various quantum invariants. In particular, it appears to be a general phenomenon that a blowup at a point for a generic parameter introduces an additional 'independent' factor to the existing invariants prior to the blowup, such as a semi-simple field factor to the quantum cohomology \cite{Bayer} or a (semi-)orthogonal summand consisting of a single object in a semi-orthogonal decomposition of the derived category \cite{Orlov}, \cite{VXW}, etc.

In this paper, we investigate how blowups affect Lagrangian Floer theory of SYZ fibers in a class of complex surfaces, while establishing related mirror symmetry statements. Our focus centers on complex surfaces obtained via blowups on toric surfaces at distinct points, allowing for blowup centers in generic positions of the toric divisor (not fixed by the torus action). Such surfaces are, of course, no longer toric, and often referred to as the non-toric blowups of toric surfaces. They are important building blocks of log Calabi-Yau surfaces as they serve as toric models \cite{GHKK}, and recently their mirror symmetry questions were intensively studied, for e.g., by \cite{HK}. Amongst them, the approach of \cite{BCHL} pursues the Strominger-Yau-Zaslow (SYZ in short) aspects of the mirror construction of log Calabi-Yau surfaces in the realm of Lagrangian Floer theory, which will be taken as the basic geometric setup of this paper.

More specifically, \cite{BCHL} demonstrates that the complement of an anticanonical divisor in a non-toric blowup $X$ admits a special Lagrangian fibration away from a small neighborhood of the nodal fibers. The presence of singular fibers gives rise to complicated wall-crossing phenomena, due to the existence of Maslov $0$ holomorphic disks emanating from these fibers. The wall structure of the fibration can be explicitly computed using what is so called scattering diagrams, drawn on the base of SYZ fibration. Scattering diagrams are comprised of combinatorial data formed by affine rays (called \emph{walls}) of rational slopes satisfying a certain consistency. Each wall carries a cluster transformation, recording how the count of holomorphic disks changes when crossing over rays. We shall see in Lemma \ref{lemma:centralchamber} that in the case of our interest, the scattering diagram has an open chamber $R_0$ on which the count of Maslov $2$ holomorphic disks (of small energies, in non-Fano situation) remains constant. Through holomorphic/tropical correspondence, tropical disks can be used as substitutes for holomorphic disks. These are piecewise linear objects within the scattering diagram, which helps us perform our analysis on the mirror side efficiently in a combinatorial manner.

Provided that the torus fibers are weakly unobstructed\footnote{This is automatic by the degree reason when $X$ is semi-Fano, and will be shown in Lemma \ref{lem:torusweakunobs} for general non-Fano cases.}, the count of Malsov $2$ disks with boundaries on such fibers gives rise to a Landau-Ginzburg (LG in short) model, a two-variable Laurent series $W$ with $\Lambda$-coefficients called the \emph{potential}. We will mostly stick to the potential computed over a preferred chamber $R_0$ in the scattering diagram. Although the expression of $W$ differs from one chamber to another, we can pretend that the same expression continues throughout other chambers by analytic continuation as in \ref{subsubsec:globalizecoord} (with some interesting subtlety, see \ref{subsec:geomnon-toric}).

We remark that while the Laurent series $W$ may even be extended to a function on $(\Lambda \setminus \{0\})^2$ in this way, it is natural to exclude the region beyond the base of the fibration. Indeed, outer points are not geometrically meaningful in the sense that they do not admit geometric representatives on the symplectic side. To the contrary, recall that each point in the actual domain of $W$ corresponds to a torus fiber equipped with a local system along the original spirit of SYZ \cite{SYZ96}, or more recently family Floer theory \cite{Ab-famFl1, Yuan}. We will also consider its bulk-deformation $W^\mathfrak{b}$ by $\mathfrak{b} \in H^{even} (X;\Lambda_+)$ subject to a certain technical condition.\\

One of the primary goals of this paper is to establish that the potential $W$ acquired through this method is the genuine mirror of the non-toric blowup $X$ (possibly non-Fano) by proving close-string mirror symmetry in this context. Namely, we compare the (big) quantum cohomology $(QH^\ast_\mathfrak{b}, \star_\mathfrak{b})$ of $X$ and the Jacobian ideal ring of the bulk-deformed potential $W^\mathfrak{b}$. Due to Bayer \cite{Bayer}, it is known that $QH^\ast_\mathfrak{b} (X)$ is semi-simple for a generic K\"{a}hler form and $\mathfrak{b}$, i.e. completely decomposed into field factors. Thus our objective becomes verifying that the count of non-degenerate critical points of $W^\mathfrak{b}$ matches the rank of $QH^\ast_\mathfrak{b} (X)$. However, when naively treated as a function on $(\Lambda \setminus \{0\})^2$, the Laurent series $W^\mathfrak{b}$ typically shows an excessive number of critical points, hence it is crucial that we restrict our analysis to a more confined 'geometric' region, as mentioned above. See Definition \ref{def:geomcrit} for more details. This point has already appeared in earlier works such as \cite{GW}, \cite{FOOO10b}. 

Interestingly, ruling out non-geometric critical points is necessary only after higher Maslov index disks start nontrivially contributing to the potential (which can happen due to sphere bubbles with negative Chern numbers or $\mathfrak{b}$ with high codimension).
%a sufficient number of blowups, or more precisely, when a divisor with Chern number $0$ or less is introduced (when the surface becomes semi-Fano/non-Fano). 
If the surface remains (semi-)Fano, the explicit expression of the full potential $W$ is available, and is completely determined by the combinatorial data of the toric fans and the locations of the blowup centers. The critical points in this case can be easily counted using the classical theorem of Kushnirenko (see Theorem \ref{Kushnirenko}). In \cite[Section 7]{BCHL}, the locations of critical points of $W$ were calculated through direct computation for degree $5$ del Pezzo surface, that is one of the simplest Fano cases involving non-toric blowups.\\

Our main theorem is the following:
%-------------------------------------------------------------------------------------------------------------------------------------------------------%
\begin{namedthm*}{Theorem I}[Theorem \ref{main}]\label{thm:intromain}
Let $X$ be a non-toric blowup of a toric surface $X_{\Sigma}$, and let $W^{\mathfrak{b}}$ be the the bulk-deformed potential of $X$, defined on the SYZ mirror $\check{Y}$ of the anticanonical divisor complement $Y:=X\setminus D$. Then for a generic K\"{a}hler form and $\mathfrak{b} \in H^{even} (X;\Lambda_+)$, we have
\[
	\mathrm{Jac} (W^{\mathfrak{b}}) = QH^{*}_{\mathfrak{b}}(X)
\]
where the Jacobian ideal ring on left hand side is semi-simple, or equivalently, $W^{\mathfrak{b}}$ is Morse. Here $\mathfrak{b}$ is required to be the pull-back of a torus-invariant cycle in $X_\Sigma$ when $X$ is beyond semi-Fano.
\end{namedthm*}
%-------------------------------------------------------------------------------------------------------------------------------------------------------%

One of the novel points is that we cover non-Fano cases, where the complete formula of the potential is not available due to nontrivial sphere-bubble contributions. In Proposition \ref{prop:pindownclasses}, we carefully estimate the energy of such bubbled-off contributions, and show that they cannot serve as (symplectic-)energy minimizers at a geometric critical point of $W$. This observation is crucial since, under certain non-degeneracy conditions, the energy minimizers almost completely determine the critical point information of the potential. 
In the same spirit, for a generic K\"{a}hler form, the terms with nontrivial bulk-parameters fall within the higher-energy part of the potential, and do not influence the critical point calculation. We remark that for some special K\"{a}hler forms, however, the locations of critical points can depend on bulk-parameters, resulting in a\textit{ continuum of critical points}, as pointed out in \cite{FOOO-T2}. See Subsection \ref{subse:continuum} for a detailed example.

When non-degeneracy of leading terms is not guaranteed, the analysis becomes much harder, and handling this issue takes up the most delicate point of our argument. In this case, the leading (first order) term calculation is not enough to determine the critical points, and we have to further expand the potential to the second (or higher) order and carry out a complicated energy estimate on the expansion to find out critical points that are stable under the blowups. A detailed proof is given in Appendix \ref{APPENDIX}.

The remaining part of the proof is the tropical geometric visualization of the critical behavior of the leading terms of $W$ followed by an induction on the energy of holomorphic disks, an idea that essentially goes back to \cite{FOOO-T, FOOO-T2, FOOO10b} and \cite{FW}. The advantage of tropical geometry, under non-degeneracy conditions, is that it confines all critical points to vertices of the tropicalization of the leading term potential (Corollary \ref{cor:Wmin}). At these vertices, the problem reduces to applying the Kushnirenko Theorem to a handful of monomials. If non-degeneracy assumptions fail, critical points can occur over edges of the tropicalization, which requires a further analysis on the next order terms as mentioned above.
In either cases, the tropicalization can be easily calculated through a simple combinatorial procedure on the Newton polytope.

The effect of an individual point-blowup can be singled out from \textbf{Theorem I}. In Sections \ref{subsec:toricblowup1} and \ref{subsec:non-toriclocal}, we carefully extract and keep track of geometric critical points as we perform toric/non-toric blowups, while demonstrating that all such critical points are Morse critical points. In particular, the following is a parallel statement (upon mirror symmetry) to the result of Bayer \cite[Theorem 3.1.1]{Bayer} in our particular geometric situation. The toric blowup case below is particularly related to \cite{GW}, yet we allow $X$ to be non-Fano, and hence $W$ can be significantly different from the Hori-Vafa potential, accordingly.

%-------------------------------------------------------------------------------------------------------------------------------------------------------%
\begin{namedthm*}{Theorem II, III}[Corollary \ref{cor:toricnum}, Proposition \ref{thm:non-toricblowup}]\label{prop:blowupcrit}

Let $(X_{\Sigma}, D_\Sigma)$ be a toric surface with the mirror potential $W_{\Sigma}$. Consider the toric surface $X_{\widetilde{\Sigma}}$ obtained by taking a sequence of toric blowups of $X_\Sigma$, and the surface $X$ obtained by a sequence of non-toric blowups of $X_\Sigma$. Denote by $W_{\widetilde{\Sigma}}$ and $W$ for the mirror potential of $X_{\widetilde{\Sigma}}$ and $X$, respectively. Then generically:

\begin{enumerate}
\item[(1)]  There exists $r>0$ depending on $\omega$ such that if the sizes of exceptional divisors are smaller than $r$, then $W_{\widetilde{\Sigma}}$ has as many new non-degenerate critical points as the number of exceptional divisors for generic parameters. Moreover, every critical point of $W_{\Sigma}$ are extended to that of $W_{\widetilde{\Sigma}}$ in a valuation-preserving manner. If $W_{\Sigma}$ is Morse, then $W_{\widetilde{\Sigma}}$ is also Morse. 
\item[(2)] Each non-toric blowup point gives rise to a unique new geometric critical point of $W$ which is non-degenerate.
If the blowup point lies in $D_{\Sigma,j+1}$, then the corresponding new critical point is located near the corner $D_{\Sigma,j} \cap D_{\Sigma,j+1}$, while every critical point of $W_{\Sigma}$ are extended to that of $W$ in a valuation-preserving manner.  If $W_{\Sigma}$ is Morse, then $W$ is also Morse. 
\end{enumerate}\vspace*{-10pt}

Analogous statements of (1) and (2) also hold for the bulk-deformed potential $W^{\mathfrak{b}}$ (in place of $W_{\widetilde{\Sigma}}$ and $W$, respectively) for a generic bulk parameter $\mathfrak{b}$ given as in \bf{Theorem I}. \end{namedthm*}
%-------------------------------------------------------------------------------------------------------------------------------------------------------%

The first part of the statement concerns only toric geometry, and can be shown alternatively by appealing to the toric mirror symmetry of \cite{FOOO10b} and related facts about the quantum cohomology of $X_\Sigma$. Notice that the above theorem implies that the mirror potential $W$ (and $W^{\mathfrak{b}}$) of $X$ is always Morse for generic parameters.

In view of the tropicalization of $W$ (or that of its leading order terms), the new critical point lies over the vertex that is created when one of the unbounded edge of the tropicalization of $W_\Sigma$ branches into multiple edges as in Figure \ref{branching}.\\

Finally, we explore homological mirror symmetry aspects of non-toric blowups of toric surfaces. Due to lack of generation result for the Fukaya category in our geometric context, we consider the sub-Fukaya category $\mathcal{F}_0 (X)$ generated by Lagrangian torus fibers of the SYZ fibration on $X$. Conjecturally, critical fibers from non-toric blowups should generate the corresponding eigenvalue components (with respect to $c_{1}(X) \star -$) of the genuine Fukaya category, based on \cite[Corollary 1.12]{She} which is currently only valid under the monotone assumption. In fact, we shall see that critical points of $W$ have mutually distinct critical values, generically.

$\mathcal{F}_0 (X)$ admits the following simple structural decomposition.
First of all, we see that the torus fibers corresponding to critical points of $W$ have nontrivial Floer cohomologies by the same argument as in  \cite{CO, FOOO}. Therefore these fibers (together with suitable $\Lambda_U$-local systems) form nontrivial objects in $\mathcal{F}_0 (X)$. In particular, these fibers cannot be displaced from themselves by any Hamiltonian diffeomorphism, and we can explicitly locate all non-displaceable fibers from the tropicalization of $W$ (Section \ref{sec:tropcrit}).
We additionally show in Lemma \ref{prop:hffibers} that two different critical fibers have trivial Floer cohomology between them (this fact must be already well-known),
and that the endomorphism of a critical fiber is quasi-isomorphic to the Clifford algebra associated with the Hessian of $W$ at the corresponding point. The latter part uses the similar argument to \cite{cho05}.

\begin{remark}\label{rmk:nondisp}
For the purpose of locating a non-displaceable fiber, it is enough to find a single bulk-parameter $\mathfrak{b}$ that gives a nontrivial Floer cohomology, rather than generic $\mathfrak{b}$. For instance, this is the reason why no special treatment is needed in \cite[Theorem 4.7]{FOOO-T2} for the degenerate case whereas we deal with it separately in Proposition \ref{prop:gnc}.
\end{remark}

In view of \textbf{Theorem II, III}, we obtain a new object of $\mathcal{F}_0(X)$ each time we perform a non-toric blowup, and this new Lagrangian brane appears at a specific location. By comparing local models, we believe this object corresponds to what is called the \emph{exceptional brane} in \cite{VXW}. Interestingly, depending on the location of blowup center, the new brane can be supported on the nodal fiber (its small perturbation), regarded as an immersed Lagrangian $S^2$ boundary-deformed by immersed generators, which reveals necessity of including immersed branes in the Fukaya category. See \ref{subsec:geomnon-toric} for more details.

On the other hand, the mirror $B$-model category has exactly the same description as above. By \textbf{Theorem I}, we have that the singularity category of $W$ decomposes into skyscraper sheaves, or more precisely into their images in the quotient category by perfect complexes. It is well-known that their morphism spaces in the singularity category show the same feature (see \cite{Dy}, \cite{Tel}) for e.g.), which leads to its equivalence to $\mathcal{F}_0 (X)$.

%-------------------------------------------------------------------------------------------------------------------------------------------------------%
\begin{thm}[Theorem \ref{thm:HMS}]
Let $X_\Sigma$ be a toric surface, and $X$ a non-toric blowup of $X_\Sigma$. Suppose that every exceptional divisor from non-toric blowups has small enough symplectic volume. If $\mathcal{F}_0 (X)$  denotes the sub-Fukaya category generated by torus fibers in $X$, then there is an equivalence between $D^b \mathcal{F}_0 (X)$  and $\oplus_\lambda D^{b}_{sing} (W^{-1} (\lambda)) (\cong MF(W))$ for generic K\"{a}hler forms.

Both categories admit orthogonal decomposition with respect to critical (potential) values $\lambda$, and each summand in the decomposition is generated by the skyscraper sheaf at the unique critical point whose value is $\lambda$ or its corresponding SYZ fiber.
\end{thm}
%-------------------------------------------------------------------------------------------------------------------------------------------------------%
The other direction of homological symmetry was studied in the work of Hacking-Keating \cite{HK}.  \\

Along the way, we develop a technique for analyzing critical points of a Laurent series $W(z)$ over $\Lambda$, which involves two steps: (i) localizing the series at a certain valuation level $\val(z)=c$ by classifying possible energy minimizing terms, and (ii) inductively solving the critical point equation order-by-order. For step (ii), we may need to look at the second order terms (or higher, depending on the non-degeneracy of first order terms), which is a new feature that has not been addressed in existing literature.
We believe that this method can be applied in more general situations including the mirror potentials induced from almost-toric fibrations as well as higher dimensional toric manifolds, which will be left for future investigations.
\\

The paper is organized as follows. In Section \ref{sec:SYZ}, SYZ mirror symmetry of (non-toric) blowup of toric surfaces will be reviewed. Along the way, basic geometric setup will be provided. In Section \ref{sec:tropcrit}, we look into the tropicalization of a given Laurent polynomial $W$ over $\Lambda$ and its connection with critical points of $W$. In Section \ref{sec:singLG}, we analyze how critical points of the mirror potential change under blowups and prove our main theorem. Finally, homological mirror symmetry aspects of blowups will be studied in Section \ref{sec:HMS}. 
%Our method here should straightforwardly extends to almost toric manifolds/orbifolds, or other higher dimensional SYZ fibration as long as one has the non-degeneracy of the leading term potential. 
%Without non-degeneracy....continuum....
%
%
%{\color{red} ANYTHING TO ADD? -FUTURE: almost toric / higher dimension / orbifold / flag varieties after perturbing them to be non-monotone?
%
%MENTION importance of finding locations of critical points: non-displaceable Lagrangians
%}

%-------------------------------------------------------------------------------------------------------------------------------------------------------%
\subsection*{Notations}
Throughout we use the following notations:
\begin{align*}
&\Lambda:= \left\{ \sum_{i=0}^{\infty} c_i T^{\lambda_i}: c_i \in \mathbb{C},\, \lim_{i\to \infty} \lambda_i = \infty \right\},
\\
&\Lambda_0:= \left\{ \sum_{i=0}^{\infty} c_i T^{\lambda_i} \in \Lambda: \lambda_i \geq 0 \right\}, \\
&\Lambda_+:= \left\{\sum_{i=0}^{\infty} c_i T^{\lambda_i} \in \Lambda: \lambda_i > 0\right\}, \\
&\Lambda_U:=\mathbb{C}^\ast \oplus \Lambda_+ .
\end{align*}
We define $\val  : \Lambda \to \mathbb{R}$ by
\[
	\val:  \sum_{i=0}^{\infty} c_i T^{\lambda_i} \mapsto \min_i \{\lambda_i: i=0,1,2,\cdots\}.
\]
For given a Laurent polynomial (or series) $W=  \sum_{v \in \mathbb{Z}^{n}} \alpha_{v}z^{v}$, we write
\begin{align*}
&\supp W:=\{\,  v \in \mathbb{Z}^{n}   \;\, \rvert \;\,  \alpha_{v} \neq 0  \,\},\\
&\Delta_W:=\mbox{the Newton polygon of}\,\,W.
\end{align*}
For a toric manifold $X_\Sigma$ constructed from a fan $\Sigma$, we denote the generators of $1$-cones of $\Sigma$ by
\[
	\nu_{1},\cdots , \nu_N
\]
and their corresponding toric divisors by
\[
	D_{\Sigma}:=D_{\Sigma,1} \cup \cdots \cup D_{\Sigma,N}.
\]
We write $\Delta_\Sigma$ for its moment polytope.
Finally, for a disk class $\beta \in H_{2} (X_\Sigma, L)$, we define 
\[
	\delta(\beta) := \omega(\beta) \,\,\mbox{for}\,\,L\,\, \mbox{the moment fiber over the origin of the moment polytope}.
\]

%-------------------------------------------------------------------------------------------------------------------------------------------------------%

%-------------------------------------------------------------------------------------------------------------------------------------------------------%
\begin{center}
{\bf Acknowledgement}
\end{center}
We thank Yoosik Kim, Yu-Shen Lin, Matt Young, Arend Bayer for valuable discussions. 
  The work of the first named author is supported by the National Research Foundation of Korea (NRF) grant funded by the Korea government (MSIT) (No. 2020R1C1C1A01008261 and No.2020R1A5A1016126).
%-------------------------------------------------------------------------------------------------------------------------------------------------------%

%-------------------------------------------------------------  ⤊ ⤊ ⤊   Section 1  ⤊ ⤊ ⤊   ----------------------------------------------------------------%
%-------------------------------------------------------------------------------------------------------------------------------------------------------%

%-------------------------------------------------------------------------------------------------------------------------------------------------------%
%-------------------------------------------------------------  ⤋ ⤋ ⤋   Section 2  ⤋ ⤋ ⤋  ----------------------------------------------------------------%

\section{The SYZ mirror of a log Calabi-Yau surface}\label{sec:SYZ}

Our main object is to study the singularity information of the Laurent series that emerges as the mirror of a non-toric blowup of toric surfaces. Non-toric blowup is a crucial step in the toric model of a log Calabi-Yau surface, which enables us to understand its geometric structure through toric geometry. It is known that any log Calabi-Yau surface is isomorphic to some non-toric blowup, with potential modifications of a boundary (anticanonical) divisor. This allows us to follow Auroux's program \cite{auroux07} and construct a special Lagrangian fibration on the anticanonical divisor complement of the surface, where the associated holomorphic disk counting leads to the LG model mirror to the surface. 
In this section, we will review the construction of this LG mirror. In particular, the SYZ mirror can be calculated entirely by tropical geometric terms due to \cite{BCHL}.

\subsection{Lagrangian Floer theory}\label{subsec:LFT}
We begin with a more general situation of Lagrangian torus fibration. Consider a special Lagrangian fibration $\varphi : X \setminus D \to B$ on the complement $X \setminus D$ of a anticanonical divisor $D \subset X$. Then $B$ admits two distinguished system of coordinates as follows.
The complex affine coordinates on $B$ are given by
\[
	 u_i (u) := \int_{C_i} \mathrm{Im}\, \Omega \qquad i=1,\cdots, n
\]
at $u \in B$ where $C_i$ is a $n$-dimensional chain swept out by $f_i$ along a path from the fixed reference point $u_0$ in $B$ to $u \in L$, and $\{e_{1},\cdots, e_{n}\}$ is a chosen basis of $H_{n-1} (L_u;\mathbb{Z}) =H^1 (L_u;\mathbb{Z})$. On the other hand, the symplectic affine coordinates are given by
\begin{equation}\label{eqn:sympaff} 
x_i (u)=  \int_{A_i} \omega \qquad i=1,\cdots,n
\end{equation}
where $A_i$ is a cylinder analogously obtained from a chosen basis $\{f_{1},\cdots, f_{n}\}$ of $H_{1} (L_u;\mathbb{Z})$. For convenience, we assume $\{f_i\}$ and $\{e_j\}$ are dual to each other, $(f_i,e_j) = \delta_{ij}$.

Following \cite{auroux07}, the SYZ mirror of $X$ is given by first taking dual torus fibration $\check{Y}^\mathbb{C}$ of $\varphi$ and equipping it with a Laurent series $W^\mathbb{C}$ called the \emph{potential}, determined by the count of holomorphic disks that intersect $D$. Identifying $\check{Y}^\mathbb{C}$ as 
\[
	\check{Y}^\mathbb{C} = \{(L_u:=\varphi^{-1} (u),\nabla) : u \in B, \nabla \in \Hom (H_{1}(L_u), \mathrm{U}(1)) \}.
\]
The potential $W^\mathbb{C} :\check{Y}^\mathbb{C} \to \mathbb{C}$ can be then written as
\begin{equation}\label{eqn:Wanticancc}
W^\mathbb{C} (L_u,\nabla) = \sum_{\beta \in \pi_{2} (X,L_u), \mu(\beta)=2} N_\beta  e^{ - \int_\beta \omega} hol_{\partial \beta} \nabla,
\end{equation}
where $N_\beta$ is the number of holomorphic disks bounding $L_u$ in class $\beta$ (passing through a generic point of $L_u$). Assuming Fano condition, $N_\beta$ counts disks intersecting $D$ exactly once, and \eqref{eqn:Wanticancc} turns out to be a finite sum. In general, one needs to introduce the non-Archimedean valuation ring $\Lambda$, and substitute $T=e^{-1}$, i.e., 
\begin{equation}\label{eqn:Wanticanll}
W (L_u,\nabla) = \sum_{\beta \in \pi_{2} (X,L_u), \mu(\beta)=2} N_\beta  T^{  \int_\beta \omega} hol_{\partial \beta} \nabla,
\end{equation}
which always converges in the $T$-adic topology. Introducing local coordinates 
\begin{equation}\label{eqn:mirrorzi}
z_i=T^{x_i} hol_{f_i} \nabla,
\end{equation}
the potential $W$ can be written as 
\[
	W(z_{1},\cdots, z_{n})=\sum N_\beta T^{\int_{A_{\partial \beta}} \omega}  z^{\partial \beta},
\]
where $z^{\gamma} := z_{1}^{(\gamma,e_{1})} \cdots z_{n}^{(\gamma,e_{n})}$ for $\gamma \in H_{1} (L_u;\mathbb{Z})$, and $T^{\int_{A_{\partial{\beta}}} \omega}$ is the flux between $L_{u_0}$ and $L_u$ defined similarly to \eqref{eqn:sympaff}.

Alternatively, one can obtain $W$ via family Floer theory of the SYZ fibration. In this case, $W$ is constructed as a gluing of the fiberwise Lagrangian Floer potentials $W_u$ defined implicitly by $\sum_k m_k (b,\cdots, b) = W_u (b) \cdot [L_u]$. Here, $b = \sum y_i e_i$ for a chosen basis $\{e_i\}$ of $H^1 (L,\mathbb{Z})$ and $y_i( \in \Lambda_0)$ are subject to weakly unobstructedness condition. This is related to the previous formulation \eqref{eqn:Wanticancc} by
\begin{equation}\label{eqn:resttofiberwiseW}
W_u (\uz_{1}, \cdots,\uz_{n}) =  W (L_u, \nabla^{(\uz_{1},\cdots, \uz_{n})})
\end{equation}
for $(\uz_{1}, \cdots, \uz_{n}) := (e^{y_{1}},\cdots, e^{y_{n}}) \in \Lambda_U$ where $\nabla^{(\uz_{1},\cdots,\uz_{n})} \in \Hom (H_{1}(L_u) , \Lambda_U)$ is a flat connection having $\uz_i$ as a holonomy along the Poincar\'e dual of $e_i$ ($W$ in \eqref{eqn:Wanticanll} is extended to allow $\Lambda_U$-connections, which does not create any problem). 

In the Family-Floer perspective, the mirror $\check{Y}$ should be replaced by the rigid analytic variety $\check{Y}$. It is still fibered over $B$, but with $T^n$-fibers in $\check{Y}$ replaced by $(\Lambda_U)^n$. The previous complex coordinates are extended so that $(L_u, \nabla^{(\uz_{1},\cdots, \uz_{n})})$ has coordinates on $\check{Y}$ given as $z_i = T^{x_i(u)} \uz_i$.
We will denote the resulting fibration (SYZ-dual to $X \setminus D \to B$) by $\check{\varphi} :\check{Y} \to B$. We will mostly write $\val$ for $\check{\varphi}$ in the main application, since $\check{\varphi}$ can be identified with the restriction of the map $\val:(\Lambda^\times)^n \to \mathbb{R}^n$ when written in terms of coordinates $z_i$. See \cite{Yuan} for a detailed construction of the rigid analytic mirror in the realm of family Floer theory.

\begin{example}\label{ex:toric}
Let $X_\Sigma$ be an $n$-dimensional toric Fano manifold with the toric fan $\Sigma$ whose primitive rays are generated by integral vectors $\nu_{1}, \cdots,  \nu_N$, and $\Delta_\Sigma$ the moment polytope of $X_\Sigma$ (which determines the K\"{a}hler form on $X_\Sigma$) given as
\[
	\langle x, \nu_i \rangle \geq -\lambda_i \quad i=1, \cdots, N.
\]
For later use, we denote by $D_{\Sigma,i}$ the (irreducible) toric divisor associated with $\nu_i$.
In this case, the mirror $\check{Y}$ can be identified as a subset of $\Lambda^n$ consisting of elements whose valuations lie in the interior $B$ of $\Delta$, and the dual fibration can be identified with $\val : \check{Y} \to B$. 

\sloppy There is a one-to-one correspondence between the free $\mathbb{Z}$-module generated by $\{\nu_{1},\cdots,\nu_N\}$ and $H_{2} (X_\Sigma, L)$ for $L$ a toric fiber, and hence we have the corresponding basis $\{\beta_{\nu_{1}},\cdots, \beta_{\nu_{2}}\}$ of $H_{2} (X_\Sigma,L)$. Their boundaries are precisely given as $\nu_{1},\cdots,\nu_N$ in $H_{1}(L)$.                                                                                                                                                                                 The classification result of \cite{CO} tells us that each $\beta_{\nu_i}$ can be represented by a Maslov index $2$ holomorphic disk intersecting $D_{\Sigma,i}$ exactly once.
Consequently, each primitive ray (hence each toric divisor) contributes a monomial $z^{\partial \beta_{\nu_i}}=z^{\nu_i}$ to the potential $W_\Sigma$ on $\check{Y}$ according to the classification of Malsov 2 disks. (Here, $\partial \beta_{\nu_i}$ is viewed as a class in $H_{1}(L,\mathbb{Z})$ for a toric fiber $L$.) These disks are usually called the \emph{basic disks}.
If $X_\Sigma$ is semi-Fano, then there can be additional sphere bubble contributions \cite{chan-lau}, and not much is known about the precise computation of the potential beyond this case.
\end{example}

In the presence of singular fibers, the count $N_\beta$ in \eqref{eqn:Wanticancc} shows a certain discontinuity, which results in the wall-crossing of $W(L_u,\nabla)$. In our main applications below, walls are given as the union of affine lines or rays in the base $B$ with respect to the complex affine structure, and $N_\beta$ remains constant as long as $u$ does not go across one of these walls. When going across the wall, $W$ is changed by a certain cluster-type transformation whose rough shape appears in \eqref{eqn:wctrans}. In general, a wall appears as an affine line segment in $B$  of an integral slope $\gamma$ such that if $u$ is on this wall, $L_u$ bounds Maslov $0$ disks whose boundary class is $\gamma$. (More precisely, the algebraic count of such disks is nonzero for $L_u$.)

In Family Floer perspective, having discontinuity of $W$ accounts for gluing different local charts of the mirror rigid analytic variety by nontrivial coordinate transitions which link local $W_u$'s. The transition map can be actually computed by comparing Fukaya $A_\infty$-algebras (especially their weak bounding cochains) of two nearby fibers $L_u$ and $L_{u'}$ in the adjacent charts via pseudo-isotopies (as known as \emph{Fukaya's trick}). It induces an $A_\infty$-quasi isomorphism between $CF(L_u,L_u)$ and $CF(L_{u'},L_{u'})$, which determines the coordinate change between $W_u$ and $W_{u'}$ given in the form of
\begin{equation}\label{eqn:wctrans}
 W_{u} (z_{1},\cdots, z_{n})= W_{u'} (z_{1}',\cdots, z_{n}'),\qquad z_i' =  T^{\epsilon_i} z_i (1 + f_i) \,\, i=1,\cdots,n
\end{equation}
where $f_i \cong 0$ modulo $\Lambda_+$ and $\epsilon_i$ is a flux that limits to zero as $u$ and $u'$ get closer to each other. We will have to allow a negative (but arbitrarily close to $0$) valuation of $f_i$ in the main application, which is a source of a few interesting features. For instance, see Remark \ref{rmk:failac} and, in that regard, \ref{subsec:geomnon-toric} in addition.

\subsection{Wall-crossing in $\dim_\mathbb{C} X =2$}

Now we suppose $\dim_\C X =2$. Given a generic nodal fiber (topologically a once-pinched torus) of $X \setminus D \to B$, we describe more concretely the shape of the associated wall structure and its wall-crossing transformation.  Let $L_0=\varphi^{-1} (u_0)$ denote the nodal fiber, and consider the vanishing cycle\footnote{It becomes a genuine vanishing cycle in Picard-Lefschetz theory after the hyperk\"{a}hler rotation.} in $H_{1} (L_u ; \mathbb{Z})$ for $L_u$ a nearby fiber. 
The corresponding vanishing thimble produces a holomorphic disk of Maslov $0$. Let us denote its class by $\beta_0$. Torus fibers bounding this $\beta_0$-disk are aligned in an affine line with respect to complex affine coordinates emanating from $u_0$ (see for instance, \cite[Proposition 5.6]{Lin}) which we call an \emph{initial wall} or \emph{initial ray}. Maslov index 2 disks can be glued with this to produce new Maslov index 2 disks which results in nontrivial change of the potential. 
 Note that the affine structure has a nontrivial monodromy around the singular fiber, and one usually chooses a branch-cut (some infinite ray starting from $u_0$) in $B$, across which the affine coordinates jump by this monodromy. 
  
When we go across the initial wall positively as in Figure \ref{fig:wcori}, the two potentials (or the corresponding local mirror charts) are related by the coordinate change
\begin{equation}\label{eqn:wctransgamma}
 z^\gamma \mapsto z^\gamma (1+ f(z^{\partial \beta_0} ))^{\langle \partial \beta_0,\gamma \rangle}  
 \end{equation}
where $f$ can be formulated in terms open Gromov-Witten invariants \cite{Lin}.

\begin{figure}[h]
	\begin{center}
		\includegraphics[scale=0.5]{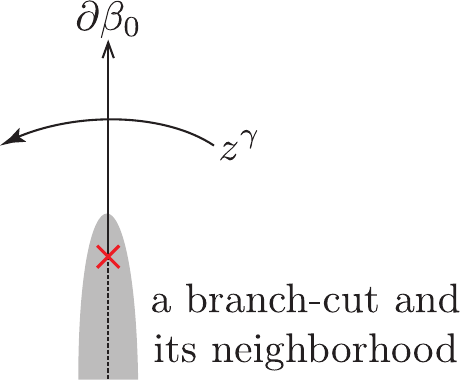}
		\caption{}
		\label{fig:wcori}
	\end{center}
\end{figure}

Initial walls form building blocks of the entire wall structure on $B$. In fact, removing arbitrarily small neighborhoods $B_{sing}$ of branch-cuts, the union of all the walls can be identified as the minimal consistent scattering diagram containing these rays drawn on 
\[
	B_{reg}:=B \setminus B_{sing}.
\]
Here, the scattering diagram is the set of walls (affine rays with integral slopes) coupled with the wall-crossing transformations (of the form \eqref{eqn:wctransgamma}), and the diagram is called \emph{consistent} if for any loop in $B_{reg}$, the composition of all wall-crossing transformations for the intersecting walls is the identity. 
We will denote by $\mathfrak{D}$ the consistent scattering diagram on $B_{reg}$ consisting of walls.
It is known that the additional rays (those other than initial rays) in $\mathfrak{D}$ are produced by colliding of initial rays which are completely determined by consistency. We refer readers to \cite{BCHL} for more details about the scattering diagram and its appearance in SYZ mirror construction.

It is more convenient to understand the wall structure in terms of tropical geometry. In fact, points $u \in B_{reg}$ on the wall can also be characterized as possible ends of Maslov 0 tropical disks in $B_{reg}$ which are defined (in our particular geometric situation) as follows.

\begin{definition}\label{def:trop0} A Maslov $0$ tropical disk in $B_{reg}$ with end at $u \in B_{reg}$ is the image of a continuous map $h : T \to B_{reg}$  away from the union of branch-cuts  satisfying the following. Denote by $T_0$ and $T_{1}$ the sets of vertices and edges of $T$, respectively. 
\begin{itemize}
\item[(i)] $T$ is a rooted tree with a unique root $x$ and $h(x) =u$;
\item[(ii)] $h|_e$ embeds $e \in T_{1}$ onto an affine line segment with integral slope (with respect to complex affine coordinates);
\item[(iii)] $h$ maps a leaf (that belongs to a finite edge) to one of singular fibers $u_0$; 
 \item[(iv)] there exists a weight function $w: T_{1} \to \mathbb{Z}_{>0}$ such that at any $v \in T_0$, one has the balancing condition
\begin{equation}\label{eqn:bal}
 \sum_i w(e_i) v(e_i) = 0
\end{equation}
where the sum is taken over all edges incident to $v$, and $v(e_i)$ denotes the primitive vector along $h(e_i)$ pointing away from $v$. 
\end{itemize}
To a tropical disks, one can assign a relative homology class 
\begin{equation}\label{eqn:classmi0trop}
\beta:=\sum_{x_i \,\, \mbox{a leaf}}  \beta_{h(x_i)} \in H_{2} (X\setminus D , L)
\end{equation}
where $\beta_{h(x_i)}$ is the class of vanishing thimbles for the singular fiber $L_{h(x_i)}$. (Hence, one can also make sense of a boundary class($=\partial \beta \in H_{1} (L_u)$) of the tropical disk.)
\end{definition}

Notice that the initial walls explained above are precisely the simplest kind of Malsov $0$ tropical disks.
Moreover, if there exist two Maslov $0$ tropical disks ending at $u \in B_{reg}$, then they can glue together to produce a new class of disks. Namely, we take the union of these two disks possibly with nontrivial multiplicities and add an additional edge emanating from $u$ which is determined by \eqref{eqn:bal}. This is precisely the reason (in tropical side) we obtain a consistent scattering diagram from the wall structure. 
%{\color{red} TROPICAL DISK OF MI=0; especially, WEIGHTS}

\subsubsection{Extension of local coordinates to the gluing of local mirrors}\label{subsubsec:globalizecoord}

Consider two small regions $U_{1}$ and $U_{2}$ on opposite sides of a single wall in $B_{reg}$, which do not contain any walls themselves.
Suppose $U_{1}$ and $U_{2}$ above are both away from a neighborhood of the branch-cuts. Denote by $W_{1}$ and $W_{2}$ the local mirror potentials defined on these regions. Each $W_i$ can be described as a single convergent Laurent series on  $U_i \times (\Lambda_U)^2$ after trivializing the fibration (that induces coordinates \eqref{eqn:mirrorzi}).
We can choose a basis $\{f_{1},f_{2}\}$ of $H_{1} (L_u;\mathbb{Z})$ for $u \in U_{1} \cup U_{2}$ such that the associated coordinates $(z_{1},z_{2})$ and $(z_{1}',z_{2}')$ on respectively $U_{1}$ and $U_{2}$ make the wall-crossing transformation in the simple form  
\begin{equation}\label{eqn:clusterf}
z_{1}' = T^{\langle f_{1}, u  - u' \rangle} z_{1}, \quad z_{2}' = T^{\langle f_{2}, u  - u' \rangle} z_{2} (1+ f(z_{1})) 
\end{equation}
compared at $u(=\val (z_{1},z_{2})) \in U_{1}$ and $u'(=\val (z_{1}',z_{2}')) \in U_{2}$, where $f$ is a power series appearing in the wall-crossing transformation \eqref{eqn:wctrans}.
%with $f \equiv 0 \!\! \mod \Lambda_+$.
(For e.g., choose $f_{1}$ in the basis to be the class of $\partial \beta_0$ where $\beta_0$ is the class of Maslov zero disks responsible for the wall between $U_{1}$ and $U_{2}$.)

Notice that if $f$ in \eqref{eqn:clusterf} belongs to $\Lambda_+$, then the transformation does not create any extra energy (the exponent of $T$) except the flux between different points in $B$ which only indicates the difference of the locations of $u$ and $u'$. 
Therefore it is possible to extend the coordinates $z_{1},z_{2}$ throughout the second chart $U_{2}$ simply by solving $z_i' = z_i (1+ f_i(z))$ in $z_i$ as long as the solution $z_i$ is well-defined (i.e convergent as power series in $z_i'$ over $\Lambda$). 

\begin{remark}\label{rmk:failac}
In the actual application, $f_i$ in \eqref{eqn:clusterf} can be of negative valuation (see, for e.g., \eqref{eqn:wcformualepsilon}), and it is possible for a certain $z_{1}$ that $\val(z_{2}')$ and $\val(z_{2})$ are significantly different. For this reason, this analytic continuation from $U_{1}$ cannot cover some codimension $1$ subset in $U_{2}$, and vice versa. 
\end{remark}

More concretely, we define $\tilde{z}_i$ on $U_{1} \cup U_{2}$ in such a way that $\tilde{z}_{1}= T^{\int_{A_{1}} \omega} hol_{f_{1}} \nabla(=z_{1}=z_{1}')$ everywhere on $U_{1} \cup U_{2}$ and $\tilde{z}_{2} = z_{2}$ on $U_{1}$, but $\tilde{z}_{2} = z_{2}' (1+ f(z_{1}'))^{-1}$ on $U_{2}$.
In these coordinates, one only needs to keep the expression $W_{1}$ to describe the mirror over $U_{1} \cup U_{2}$ since
$W_{2}(z_{1}',z_{2}') = W_{1} (\tilde{z}_{1},\tilde{z}_{2})$
by definition of $\tilde{z}_i$ as long as the right hand side converges.
%{\color{blue}(as long as the right hand side converges. This should be the case in our main application?)}

%In particular, the problem of finding critical points of $W_{1} \sharp W_{2}$ (the gluing) over $\val ^{-1} (U_{1} \cup U_{2})$ is equivalent to that of $W_{1}$ over $\val ^{-1} (U_{1} \cup U_{2})$ although in the latter case, the coordinates for critical points sitting in $U_{2}$ should be converted appropriately in view of $W_{2}(z_{1}',z_{2}') = W_{1} (\tilde{z}_{1},\tilde{z}_{2})$.

%
%
%{\color{red} THIS DOES NOT COVER CERTAIN REGION WHICH CAN BE COVERED ONLY BY CONSIDERING NODAL FIBERS.  NO MATTER WHAT: We shall prove in \ref{} that there does not exist geometrically meaningful (in the sense of \ref{subsec:geomcrit}) critical points outside $R_0$ if blowup points are close enough to the corners.
%}

%In our main application this will be always the case, as $f_i$ has finitely many terms?? (MOVE THIS AFTER 2.2; dim=2 is enough)
%}
%
%Accordingly, one can extend the local mirror $(U_{1},W|_{U_{1}})$ (where $W_{U_{1}}$ is the gluing of all $W_u$ for $u \in U_{1}$) to $U_{1} \cup U_{2}$ keeping the same (formula of) $W_{U_{1}}$ 
%(by gluing $U_{2}$ to $U_{1}$ via the above transformation first) as long as the expression of $W_{U_{1}}$ converges over $U_{2}$. This is obviously possible if $W$ consists of only finally many Laurent monomials, and one can write the mirror LG model using a single global chart (with the single expression for $W$). %This helps us simplifying the argument in the main application as we shall see in \ref{}. 
% 

\subsection{Log CY surfaces and toric models}\label{subsec:toricmodel}
%{\color{red}(MODIFY below if we are not to mention blow-down later.)}{\color{cyan} Should we include include non-trivial example of blowdown?}
Our main interest is the mirror LG model for the complex surface $X$ obtained by a non-toric blowup of a toric surface $X_\Sigma$, that is, we allow the blowup center to consist of generic points in a toric divisor $D_\Sigma$. $X$ forms a log Calabi-Yau pair together with $D$ the proper transform of $D_\Sigma$. More generally, by \cite[Proposition 1.3]{GHK}, any log Calabi-Yau surface $(X',D')$ can be represented as a blowdown of such a surface $X$ where the blowdown this time contracts divisors in $D$. Namely, for any $(X',D')$, one can find a diagram
%paired with the anti-canonical divisor $D$ which has nodes. Such a pair $(X,D)$ is usually referred to as Log Calabi-Yau, and it is well-known from the work of \cite{} that any log CY surface admits a toric model up to a modification of the boundary divisor. More precisely, there exists a toric surface $(\bar{X},\bar{D})$ and the set of generic points on $\bar{D}$ (non-torus fixed points) such that the \emph{non-toric blowup} $(\tilde{X},\tilde{D})$ is equivalent to $(X,D)$ after contracting some $(-1)$-curves in $\tilde{D}$. Namely, we have 
\begin{equation}\label{eqn:looijenga}
\xymatrix{ & (X, D ) \ar[dr]^{\pi} \ar[dl]_{\pi'} & \\ 
(X',D') & & (X_\Sigma, D_\Sigma)}
\end{equation}
where $\pi$ is the non-toric blowup, and $\pi'$ is a blowup of $X$ along nodal points in $D'$ ($\pi'$ is often called a toric blowup). \eqref{eqn:looijenga} is called a toric model of $(X',D')$.

Throughout, we assume that the symplectic size of all the exceptional divisors are small enough, so they are located near infinity of $B$ when written in complex affine coordinates. Thus the class of the symplectic form $\omega=\omega_X$ on $X$ is given as $[\omega]=[\pi^\ast \omega_{X_\Sigma} - \sum \epsilon_i E_i]$ where $E_i$'s are exceptional classes and $0< \epsilon_i \ll 1$.  As before, we choose a small neighborhood of each branch cut near $E_i$ and consider its complement $B_{reg}$.
In \cite{BCHL}, the first author constructed a Lagrangian fibration on the divisor complement, $X \setminus D= X' \setminus D' \to B$, which is special on $B_{reg}$, and showed that the resulting scattering diagram on $B_{reg}$ agrees with the one in \cite{GrPS} induced by some algebraic curve counting. Torus fibers are automatically weakly-unobstructed if $D$ is positive for degree reason, and it is the case for general $D$ by Lemma \ref{lem:torusweakunobs}, below.

 If we take a non-toric blowup at a generic point in $D_{\Sigma}$ of the toric model, then the fibration is obtained by gluing in the local model appearing in \cite{auroux09} to the Lagrangian fibration pulled-back from $X_\Sigma$ away from exceptional divisors. 
Each point in the blowup center corresponds to one nodal fiber in the torus fibration on the blowup. If $z_{1}$ denotes the coordinate induced from the boundary class $\partial \beta_0$ of the associated Maslov 0 disk, then the wall crossing formula is given as
\begin{equation}\label{eqn:wcformualepsilon}
\begin{array}{l}
 z_{1} = z_{1}' (1+ T^{-\epsilon} z_{2}') \\
 z_{2} = z_{2}'
 \end{array}
 \end{equation}
where $-\epsilon$ in the formula accounts for the difference between $\pi^\ast \omega_{X_\Sigma}$ and $\omega$.

% {\color{red} Further specify the K\"ahler class ``$\epsilon$" for later use?}

Each exceptional divisor shares one point with the associated nodal fiber, and hence all the nodal fibers are sitting close to infinity as well. The nontrivial monodromy of the affine structure near the nodal fiber (singularity of the affine structure) can be pushed to a single branch cut, which we take to be the ray from the singular fiber toward infinity. See Figure \ref{fig:wcori}.

\subsection{The mirror LG model for $(X,D)$ and geometric critical points}\label{subsec:geomcrit}

In the situation of Proposition \ref{prop:wcountbrl},
if $u$ remains in a chamber $R$ (a connected component of $B_{reg} \setminus \mbox{union of walls}$), then $W_u$ varies continuously, and hence gives a well-defined function on $\check{Y}_R:=\val ^{-1} (R)$ where $\val : \check{Y} \to B_{reg}$ is a dual torus fibration in rigid analytic setting appearing in \ref{subsec:LFT}.\footnote{Strictly speaking, one has to take $\check{Y}$ in this context to be the family Floer mirror of $\varphi^{-1} (B_{reg}) (\subset X) \to B_{reg}$.} We obtain a global function by gluing these local pieces via wall-crossing transformations \eqref{eqn:wctrans}. 

In view of discussion in \ref{subsubsec:globalizecoord}, it is enough to consider some fixed expression of $W_R$ valid only over some chamber $R$, but with an enlarged domain. The expression \eqref{eqn:Wunon-toricblowup} a priori defines a function on $(\Lambda^\ast)^2$ (as long as it converges).
Note that the coordinates $z_{1},z_{2}$ are actually global as we have removed small neighborhoods of branch-cuts. Indeed, the image of the embedding of $B_{reg}$ into $\mathbb{R}^2$ by (global) symplectic affine coordinates $(\val (z_{1}),\val (z_{2}))$ converges to the moment polytope $\Delta_\Sigma$ as $B_{sing}$ shrinks. This naturally leads to the following definition.

\begin{definition}\label{def:geomcrit}
The critical point of $W_{R}$ 
considered as a function on the maximal domain convergence is called \emph{geometric} if its valuation lies in $B_{reg} \approx \Delta_\Sigma$.
\end{definition}

It is possible that geometric critical points sit over the codimension $1$ subset not covered by the analytic continuation (Remark \ref{rmk:failac}). We can handle this problem in two different ways. First, we can choose the location of the blowup center cleverly so that there exists a chamber which contains every geometric critical points (Lemma \ref{lem:nooutr0}). Alternatively, we can add an extra chart coming from Floer deformation theory of nodal fibers as in \ref{subsec:geomnon-toric}. The latter works only for the semi-Fano case.

 In practice, we will work with some special chamber $R=R_0$ defined as follows. 
% 
% $R=R_0$ in Lemma \ref{lemma:centralchamber}, and choose $p_i$ in the lemma arbitrarily close to the corner. 
% Therefore, for the purpose of analyzing the critical point of the mirror LG model $W$ on $\check{Y}$, it is enough to find geometric critical points of $W_{R_0}$.
Recall that the wall structure associated to the torus fibration on $X \setminus D $ is given as the minimal consistent scattering digram that contains initial rays determined by Maslov zero disks emanating from singular fibers.  
For convenience, we will always choose locations of points in the blowup center (for $X \to X_\Sigma$) to be near the corners of $D_\Sigma$, or more precisely right after the corner when traveling around $D_\Sigma$ counterclockwise. If a point in the blowup center lies in an irreducible component $D_{\Sigma,i}$ of $D_\Sigma$, then we require its valuation to be close enough to the corner $D_{\Sigma,i} \cap D_{\Sigma, i-1}$. In this case, the corresponding initial ray (the wall) in the scattering diagram is parallel to $\nu_i$. See Figure \ref{fig:blowuppts}.

%--------------------------------------------------------------------------------------------------------------------------------------------------------%
\begin{figure}[h]
\centering
%=================================%
%=================================%
\subcaptionbox{%
	Choice of blowup centers.
	\label{fig:blowuppts1}%
	}
	{\makebox[0.45\linewidth][c]
		{\begin{tikzpicture}[scale=0.84]
				
		\coordinate (A) at (0,0);
		\coordinate (B) at (4,0);
		\coordinate (C) at (0,4);

		\draw[line width =0.7pt] (0.5,0) node[cross] {};
 		\draw[line width =0.7pt] (0.2,0) node[cross] {};
		\draw[line width =0.7pt] (0,3.5) node[cross] {};
		\draw[line width =0.7pt] (0,3.2) node[cross] {};
		\draw[line width =0.7pt] (3.7,0.3) node[cross, rotate = 45] {};
		\draw[line width =0.7pt] (3.5,0.5) node[cross, rotate = 45] {};

		\coordinate[label=below:$D_{\Sigma,1}$](c) at ($ (A)!.35!(B) $);
		\coordinate[label=left:$D_{\Sigma,3}$] (b) at ($ (A)!.5!(C) $);
		\coordinate[label=right:$D_{\Sigma,2}$](a) at ($ (B)!.5!(C) $);

		\draw [line width=1pt] (A) -- (B) -- (C) -- cycle;

		\end{tikzpicture}%
	}
	}\hfill%
%=================================%
%=================================%
\subcaptionbox{%
	The resulting scattering diagram
	\label{fig:blowuppts}%
	}
	{\makebox[0.45\linewidth][c]	
		{\begin{tikzpicture}[scale=0.5]
		
		\draw[fill=green!30, opacity=0.4, draw=none] (0.5,-2.4)--(0.5,3.2)--(6.2,3.2);

		\draw [line width=0.7pt] (0.5,-4) -- (0.5,5);
		\draw [line width=0.7pt] (0.2,-4) -- (0.2,5);
		\draw [line width=0.7pt] (-1,3.5) -- (8,3.5);
		\draw [line width=0.7pt] (-1,3.2) -- (8,3.2);
		\draw [line width=0.7pt] (-0.1,-3.4) -- (7.4,4);
		\draw [line width=0.7pt] (-0.3,-3.2) -- (7.2,4.2);

		\coordinate[label=right:$R_0$, text opacity=1](r) at (1.5,1.2);

		\end{tikzpicture}
		}
	}%
%=================================%
%=================================%
\caption{}
\label{fig:convention}
\end{figure}%--------------------------------------------------------------------------------------------------------------------------------------------------------%

We claim that with this choice of a blowup center, there always exists an open chamber $R_0$ (a connected component of $B_{reg}$ minus walls that has a nontrivial area) in the scattering diagram that is surrounded by initial rays.

\begin{lemma}\label{lemma:centralchamber}
Let $p_i$ be the blowup point in $D_{\Sigma,i}$ farthest from the corner of $D_{\Sigma}$ that comes right before $D_{\Sigma,i}$ when traveling $D_{\Sigma}$ counterclockwise. Denote by $R_0$ the region in $B$ enclosed by initial rays for $p_i$'s (see Figure \ref{fig:blowuppts}). Then no point in the interior of $R_0$ bounds a Maslov 0 tropical disk in $X \setminus D$.
\end{lemma}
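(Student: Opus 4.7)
The plan is to show that $R_0$ is a chamber of the full consistent scattering diagram $\mathfrak{D}$ on $B_{reg}$ --- equivalently, no wall of $\mathfrak{D}$ meets $\mathrm{int}(R_0)$. Once this is established, the lemma follows immediately from the characterization (stated earlier) of points on walls as the possible ends of Maslov $0$ tropical disks.

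For the base case, no initial ray of $\mathfrak{D}$ enters $\mathrm{int}(R_0)$, essentially by the very definition of $R_0$: the choice of $p_i$ as the blowup center in $D_{\Sigma,i}$ farthest from the preceding corner makes its initial ray (parallel to $\nu_i$) the innermost among the rays associated to $D_{\Sigma,i}$. Together these $N$ innermost rays enclose $R_0$, with all other initial rays --- the parallel ones from the same divisor, and rays from different $D_{\Sigma,j}$ --- bounding $R_0$ from the outer side.

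For the scattered walls I plan to argue by induction on the scattering order. At any intersection point $p$ of two existing walls with primitive directions $\nu,\nu'$, the scattered walls emerging from $p$ have primitive directions of the form $a\nu+b\nu'$ with $a,b>0$. At a corner of $R_0$ where two bounding initial rays of directions $\nu_i,\nu_j$ meet, a direct geometric check --- using the cyclic convex arrangement of the fan rays $\nu_1,\ldots,\nu_N$ in $\mathbb{R}^2$ together with the near-corner placement of the singular fibers --- shows that both rays $\pm(a\nu_i+b\nu_j)$ emanating from $p$ lie outside the wedge that $R_0$ occupies at that corner. For higher scattering orders the relevant intersection points already lie outside $\mathrm{int}(R_0)$ by the inductive hypothesis, and the same positive-cone argument forces the newly scattered walls to remain outside $\mathrm{int}(R_0)$ as well. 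Hence no wall of $\mathfrak{D}$ enters $\mathrm{int}(R_0)$.

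The main obstacle is exactly this combinatorial-geometric verification at the first scattering order: confirming that from every corner of $\partial R_0$, the positive cone of scattered-wall directions exits the $R_0$-wedge rather than re-entering it. This is where the specific placement of the blowup centers close to the previous corners of $\Delta_\Sigma$, together with the convex cyclic ordering of the fan rays, plays the essential role --- absent either condition, scattering could in principle produce a wall passing through $\mathrm{int}(R_0)$.
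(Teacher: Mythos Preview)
Your outline is close to the paper's argument --- both are inductions on the complexity of the tropical disk / scattering order --- but your inductive invariant is too weak to close the loop, and this is exactly the ``main obstacle'' you flag.

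Knowing only that the incoming walls at a collision point $p$ do not meet $\mathrm{int}(R_0)$ does \emph{not} force the scattered ray from $p$ (in direction $a\nu+b\nu'$) to stay out of $\mathrm{int}(R_0)$. Concretely: if $R_0$ lies in the half-plane $\{\det(\nu,\,\cdot-p)\le 0\}$ but in $\{\det(\nu',\,\cdot-p)\ge 0\}$ (opposite sides with respect to the two incoming directions), then $\det(a\nu+b\nu',\,\cdot-p)$ has no fixed sign on $R_0$, and the scattered ray can cut straight through it. So the positive-cone argument by itself is not enough; you need to know not just that each wall misses $\mathrm{int}(R_0)$, but on \emph{which} side of each wall $R_0$ sits, and that this side is consistent.

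The paper supplies precisely this missing piece: orient every edge of a Maslov $0$ tropical disk toward its end, and prove the stronger invariant that $R_0$ lies on the \emph{right} of the line extending each such edge. The base case (initial rays) holds because of the specific near-corner placement of the blowup centers. The inductive step is then a one-line linear algebra fact: if $R_0\subset\{\det(v_1,\,\cdot-p)\le 0\}$ and $R_0\subset\{\det(v_2,\,\cdot-p)\le 0\}$, then for any positive combination $v_0=w_1v_1+w_2v_2$ (which is what the balancing condition gives for the outgoing edge) one has $R_0\subset\{\det(v_0,\,\cdot-p)\le 0\}$. No corner-by-corner case analysis is needed, and no appeal to convexity of the fan beyond what is already encoded in the base case.
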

 
 \begin{proof}
 
We auxiliary orient each edge of Maslov $0$ tropical disks in such a way that it points toward the end. We claim that for any edge of a Maslov $0$, $R_0$ lies on the right side of the line extending the edge (with respect to the orientation of the line chosen as above). To see the claim, we proceed with the induction on the number of vertices of tropical disks. Initial rays clearly satisfy this condition.

Suppose now a Maslov 0 tropical disk $h: T \to B_{reg}$ is given, and consider the first vertex $v$ that we meet when starting from $u$ walking in the reverse orientation. Removing the edge $e_0$ of $h(T)$ incident to the end $u$, one obtains two Malsov 0 tropical disks intersecting at $v$, both of which should satisfy the induction hypothesis. We then only need to check the condition for the removed edge $e_0$ of $h(T)$, which is obvious from the balancing condition \eqref{eqn:bal}.
 \end{proof}
 
 %{\color{red}(MODIFY below to include non-Fano situations)}

Suppose now that the divisors $D_\Sigma$, $D$ and $D'$ appearing in \eqref{eqn:looijenga} do not contain any rational curve with a negative Chern number. (Obviously, it is enough to check this for $D$.) 
%As in the previous section, we locate blowup centers for $\tilde{X} \to \bar{X}$ near corners for convenience, and denote the central chamber in the associated scattering diagram by $R$.
Note that the disk counting (the number $N_\beta$ for each disk class $\beta$) remains constant on $R_0$. The corresponding local mirror is $\check{Y}_{R_0}$ fibered over $R_0$ with $(\Lambda_U)^2$-fibers, and the count of Malsov index $2$ holomorphic disk defines an analytic function on $\check{Y}_{R_0}$. We then globalize this local LG model on $\check{Y}_{R_0}$ by the trick mentioned in \ref{subsubsec:globalizecoord}. We shall give the tropical description of this in short.

\begin{remark}\label{rmk:r0nonfano}
 If the boundary divisor has spheres with negative Chern numbers, there could be additional Malsov $0$ disks in $X$ obtained as stable disks consisting of higher Maslov disks attached with negative sphere bubbles. Hence, it is possible to have stricitly more walls than those intrinsic to $X \setminus D$. In this case, we can only guarantee that the coefficients of certain leading order terms of $W$ remain constant over $R_0$, which is enough for our purpose of analyzing critical points.
 \end{remark}

\subsection{Tropical description of the Landau-Ginzburg mirror}\label{subsec:LGtoricmodel}

Suppose now that $D$ is positive i.e., $(X,D)$ being Fano. We write $D_{i}$ for the proper transform of $D_{\Sigma,i}$. In this case, the mirror potential can be computed tropically as shown in \cite{BCHL}. We briefly review the definition of tropical disks, especially which accounts for the potential. The count of tropical disks matches that of holomorphic disks due to the correspondence between tropical and holomorphic disks established therein. Let us first focus on $(X,D)$ together with its associated special Lagrangian fibration on $X \setminus D$ explained above. 

\begin{definition} A tropical disk in $B$ with end at $u \in B_{reg}$ is the image of a continuous map $h : T \to B_{reg} $ away from the union of branch-cuts  that satisfies conditions (i) - (iv) in Definition \ref{def:trop0}, but additionally allowing an edge $e \in T_{1}$ to be unbounded subject to the condition below. (Alternatively such $e$ may also be  viewed as an edge incident to a leaf sitting at infinity.) 
\begin{itemize}
\item[(v)] if $e \in T_{1}$ is unbounded, then $h(e)$ is an affine ray in $B_{reg}$ (with respect to complex affine coordinates) which approaches infinity along $- \nu_i$ where $\nu_i$ is a primitive generator of the fan of $\bar{X}$.
\end{itemize}
Finally, the Maslov index of a tropical disk is defined as the number of unbounded edges.
\end{definition}

%===============The counting of Maslov $2$ tropical disks; boundary classes====
 
Fix a point $u$, or equivalently the fiber $L_u$. It is automatically weakly-unobstructed by degree reason. Let us first look at a basic Maslov $2$ disk bounding $L_u$, that is, a disk intersecting $D$ exactly once. Therefore, it corresponds to a tropical disk which has a unique unbounded edge $e$ with $h(e)$ an affine ray in $B_{reg}$ approaching infinity along the direction 
%of $\partial \beta$ where $\beta \in H_{2}(X,L)$ is the class of the basic disk.
perpendicular to $D$. Indeed a basic disk does project to such an affine ray in the base (when written in the complex affine coordinates). By definition, any unbounded edge of a tropical disk has its corresponding basic Maslov $2$ disk, and hence the Maslov index of a tropical disk is a tropical interpretation of the index formula which equates the Maslov index of a holomorphic disk and the intersection number of the anticanonical divisor \cite{auroux07}.

Suppose a tropical disk $h:T\to B_{reg}$ has unbounded edges $e_{1},\cdots, e_l$ that are perpendicular to $D_{i_{1}},\cdots, D_{i_l}$ (proper transforms of $D_{\Sigma,i_{1}},\cdots, D_{\Sigma,i_l}$), and its finite leaves map to singular fibers at $x_{1},\cdots, x_{l'}$. Then one can assigns a relative homology class to this tropical curve (which will be also referred to as the class of $h(T)$) given by
\begin{equation}\label{eqn:classMI2trop}
\beta = \sum_{a=1}^l \widetilde{\beta_{\nu_{i_a}}} + \sum_{b=1}^{l'} \beta_{h(x_b)} \in H_{2} (X,L)
\end{equation}
where $\widetilde{\beta_{\nu_i}}$ denotes the proper transform  of the class $\beta_{\nu_i}$ of the basic Maslov $2$ disk that hits $D_{\Sigma,i}$ exactly once away from the blowup center, and $\beta_{h(x_b)}$ is the class of the vanishing thimble as in \eqref{eqn:classmi0trop}. $\beta_{h(x_b)}$ and $\beta_{h(x_{b'})}$ give the same class if singular fibers at $x_b$ and $x_b'$ arise from blowup points lying in the same toric divisor.

Observe that the intersection of $h(T)$ and the walls in the scattering diagram $\mathfrak{D}$ is a disjoint union of Maslov $0$ tropical disks, say $h_i:T_i \to B$ for $i=1,2,\cdots, l''$. In fact, $h(T)$ joins $h_{1}(T_{1}),\cdots, h_{l''}(T_{l''})$ and initial rays $e_{1},\cdots, e_l$, and the balancing condition  determines the remaining part of $h(T)$. It is easy to see that the sum of classes of $h_i(T_i)$ equals the second summand of \eqref{eqn:classMI2trop}. First introduced in \cite{GrPS}, the complement $h(T) \setminus \cup_i h_i(T_i)$ is usually called a \emph{broken line}, from which one can completely recover $h(T)$ itself by the above discussion. It is a piecewise linear curve in the scattering diagram, whose non-smooth points lie in walls. At a wall, it can bend towards the direction that is given as a positive multiple of the primitive direction of the wall (or the corresponding Maslov $0$ tropical disk). See Figure \ref{fig:brokenlines}. 
The class of a broken line is defined as the class of its associated Maslov $2$ tropical disk.
For a more formal definition of the broken line, see for e.g., \cite[Definition 4.2]{CPS}.

%-------------------------------------------------------------------------------------------------------------------------------------------------------%
\begin{figure}[h]
\centering
	\begin{tikzpicture}[scale=0.55]

		\draw[line width=0.7pt, dashed] (-1,-2)--(-1,6);
		\draw[line width=0.7pt, dashed] (1,-2)--(1,6);
		\draw[line width=1pt] (-4,0)--(-1,0)--(1,2)--(2.5,5);
		\filldraw (2.5,5) circle (1pt);

		\draw [blue, line width=0.7pt] (1,-1) ellipse (0.3cm and 0.1cm);
		\draw[blue, line width=0.7pt] (1,-1) +(0:0.3cm and 0.1cm) arc (0:-180:0.3cm and 1.1cm);

		\draw [blue, line width=0.7pt] (-1,-1) ellipse (0.3cm and 0.1cm);
		\draw[blue, line width=0.7pt] (-1,-1) +(0:0.3cm and 0.1cm) arc (0:-180:0.3cm and 1.1cm);

		\draw [blue, line width=0.7pt, rotate around={270:(-4,0)}] (-4,1) ellipse (0.3cm and 0.1cm);
		\draw[blue, line width=0.7pt, rotate around={270:(-4,0)}] (-4,1) +(0:0.3cm and 0.1cm) arc (0:-180:0.3cm and 1.1cm);

		\draw (-4.5,0) node[above] {\small{$\widetilde{\beta}$}};

	\end{tikzpicture}
\caption{Broken Lines} \label{fig:brokenlines}
\end{figure}
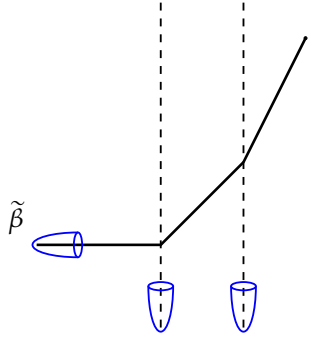
%-------------------------------------------------------------------------------------------------------------------------------------------------------%

In summary:
\begin{prop}\label{prop:wcountbrl}
Suppose $(X,D)$ is a log Calabi-Yau surface, and each irreducible component of $D$ has a positive Chern number. If $X \setminus D$ carries a special Lagrangian fibration with at worst nodal fibers, then the potential $W_u$ at $u \in B_{reg}$ can be calculated by counting broken lines with their ends at $u$. More precisely,
\begin{equation}\label{eqn:Wunon-toricblowup}
W_u = \sum_{\beta  \in H_{2}(X,L_u)} N^{trop}_{\beta} T^{\omega(\beta)} z^{\partial \beta}
\end{equation}
where $N^{trop}_{\beta}$ is the number of broken lines in class $\beta$.
\end{prop}

The mirror potential for a general log Calabi-Yau surface $(X',D')$ fitting into the toric model \eqref{eqn:looijenga} can be obtained from $W_u$ \eqref{eqn:Wunon-toricblowup} above by removing $\beta$ that nontrivially intersects divisors in $D$ contracted under $\pi'$.

One can extend the discussion above to semi-Fano situation which further allows Chern number zero spheres. We refer readers to \cite{BCHL} for this generalization. Notice that a holomorphic disk attached with such a sphere bubble contributes the same monomial as the disk itself, but with a coefficient that has higher energy (valuation).

\subsection{Non-Fano situation}
When the divisor $D$ has negative Chern number spheres, the weakly-unobstructedness of torus fibers are no longer guaranteed merely by degree consideration. One can connect $(X,D)$ with another toric surface  by inductively moving the blowup point to corners, and make use of the strategy of \cite{FOOO-T, FOOO-T2}. Hence \cite{Yuan} produces the mirror LG model defined on some rigid-analytic domain.

\begin{lemma}\label{lem:torusweakunobs}
A smooth Lagrangian torus fiber $L$  is weakly unobstructed. More precisely, for any $b \in H^1(L;\Lambda_+)$ and $\mathfrak{b} \in H^{even} (X;\Lambda_+)$ pulled-back from a torus invariant cycle in $X_\Sigma$, we have
\begin{equation}\label{eqn:bulkdefunobs}
 m_0^{\mathfrak{b}}(1) + m_{1}^{\mathfrak{b}} (b) + m_{2}^{\mathfrak{b}} (b,b) + \cdots = W^\mathfrak{b} (b) \cdot [L]
\end{equation}
for some $W^\mathfrak{b}(b) \in \Lambda$.
\end{lemma}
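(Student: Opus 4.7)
The plan is to follow \cite{FOOO-T, FOOO-T2} and \cite{Yuan}: reduce to the toric case via a one-parameter deformation of $X$ to a toric surface, and then transport the weak bounding cochain equation along the resulting pseudo-isotopy of filtered $A_\infty$-algebras. Concretely, I would slide each non-toric blowup point along its toric divisor $D_{\Sigma,i}$ into the adjacent corner of $D_\Sigma$, converting every non-toric blowup into a toric one; since non-toric and toric blowups at points of $X_\Sigma$ yield the same underlying smooth $4$-manifold, this produces a smooth family $(\omega_t,J_t)_{t\in[0,1]}$ of compatible almost K\"ahler structures with $t=1$ giving $X$ and $t=0$ giving a toric surface $X_\Sigma'$ equipped with a $T^n$-invariant $J_0$, together with a compatible family $L_t$ of Lagrangian torus fibers extending $L$.

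On the toric end $t=0$ I would invoke the weak unobstructedness theorem of \cite{FOOO-T, FOOO-T2}: after choosing $T^n$-equivariant Kuranishi structures on moduli of stable disks (including those carrying negative Chern-number sphere bubbles, which is the non-Fano extension of \cite{FOOO-T2}), the operations $m_k^{\mathfrak{b}_\Sigma}$ are $T^n$-equivariant for any torus-invariant bulk $\mathfrak{b}_\Sigma\in H^{even}(X_\Sigma';\Lambda_+)$. Combined with the cyclic symmetry of the $A_\infty$-structure, this forces
\[
\sum_k m_k^{\mathfrak{b}_\Sigma}(b,\dots,b) = W^{\mathfrak{b}_\Sigma}_{\Sigma'}(b)\cdot [L_0]
\]
for every $b\in H^1(L_0;\Lambda_+)$, with a well-defined potential value $W^{\mathfrak{b}_\Sigma}_{\Sigma'}(b)\in\Lambda$.

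To propagate this conclusion to $t=1$, I would build from the family $(X_t,J_t,L_t)$ a pseudo-isotopy of filtered $A_\infty$-algebras on $H^\bullet(L;\Lambda)$ joining the $t=0$ and $t=1$ structures, as carried out in \cite{FOOO-T2} and in the family Floer setup of \cite{Yuan}. Weak bounding cochains propagate along pseudo-isotopies via the parametrized Maurer--Cartan equation: the initial datum $(b(0),\mathfrak{b}_\Sigma)$ extends to a smooth path $(b(t),\mathfrak{b}(t))$, with $\mathfrak{b}(t)$ the family pullback of $\mathfrak{b}_\Sigma$, satisfying
\[
\sum_k m_k^{t,\mathfrak{b}(t)}(b(t),\dots,b(t)) = W^{\mathfrak{b}(t)}(b(t))\cdot [L_t]
\]
for all $t\in[0,1]$; evaluation at $t=1$ yields the desired $b$, $\mathfrak{b}$ and $W^{\mathfrak{b}}(b)\in\Lambda$ on $X$. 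The restriction in the hypothesis that $\mathfrak{b}$ be pulled back from a torus-invariant cycle is imposed precisely so that $\mathfrak{b}(0)=\mathfrak{b}_\Sigma$ is available as a torus-invariant class in $X_\Sigma'$, enabling the equivariant argument at $t=0$.

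The main obstacle is constructing the pseudo-isotopy with Kuranishi perturbations that are simultaneously $T^n$-equivariant at $t=0$, continuous across the family, and compatible with the negative Chern-number sphere bubbles that appear at intermediate $t$. This is precisely the difficulty resolved by the machinery of \cite{FOOO-T2}, and I expect it to transplant to our non-toric geometric setup with no essentially new analytic input, since the deformation is by a parameter entirely transverse to the Lagrangian geometry of $L$.
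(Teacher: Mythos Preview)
Your overall strategy---degenerate the non-toric blowup to a toric surface by sliding blowup centers to corners, then invoke \cite{FOOO-T,FOOO-T2} at the toric end---is exactly the idea the paper uses. The execution differs, and the paper's is cleaner.

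Rather than a continuous one-parameter family and a pseudo-isotopy of filtered $A_\infty$-algebras, the paper proceeds by \emph{induction on the number of non-toric blowup points} and moves a single point at a time. For one chosen blowup point $p\in D_{\Sigma,i}$, there is a symplectomorphism $F$ of $X_\Sigma$, supported away from $\pi(L)$, carrying $p$ to the adjacent torus fixed point $p'$. This lifts to a symplectomorphism $\tilde F:X'\to X$, where $X'$ is the surface obtained by replacing the non-toric blowup at $p$ with the toric blowup at $p'$ (and keeping the remaining $k$ non-toric blowups). Since $\tilde F$ is the identity near $L$, it identifies $(X,L)$ with $(X',L')$ \emph{symplectically}, hence identifies their Fukaya $A_\infty$-algebras on the nose (with the pulled-back bulk class). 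Now $X'$ has one fewer non-toric blowup point, and the induction hypothesis applies.

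What this buys: the paper never needs a family of Kuranishi structures, never needs to argue that the pseudo-isotopy induced map $\phi_*$ carries $H^1(L;\Lambda_+)$ bijectively onto itself, and sidesteps entirely the ``main obstacle'' you flag in your last paragraph. Your approach would work (for a $2$-torus the degree count forces $\phi_*(b)\in H^1$, and $\phi_1=\mathrm{id}$ modulo $\Lambda_+$ makes $\phi_*$ a filtered bijection), but it invokes substantially heavier machinery than necessary.
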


\begin{proof}
We use induction on the number of points in the blowup center $C$ for the non-toric blowup $\pi : \widetilde{X} \to X_\Sigma$. If $C$ is empty, that is, if the surface is the toric surface $X_\Sigma$ itself, then the fiber is simply the fiber of the moment map, and is shown to be weakly unobstructed by \cite[Corollary 10.5]{FOOO-T} and \cite[Corollary 6.6]{FOOO-T2} using torus-invariant Kuranishi perturbation.

Suppose now that the statement holds when the blowup center (for the non-toric blowup) consists of less than $k+1$ points, and consider $\pi : \widetilde{X} \to X_\Sigma$ which is the blowup of a toric surface $X_\Sigma$ at $C$ consisting of $k+1$ generic points in the interior of toric divisors. Choose any point $p \in X_\Sigma$ in $C$. There exists a symplectomorphism $F$ sending $p$ to a torus fixed point, say $p'$, in $X_\Sigma$ supported away from $\pi(L)$. 
Denote by the (toric) blowup of $X_\Sigma$ at $p'$ by $X_{\Sigma'}$, and the non-toric blowup of $X_{\Sigma'}$ at $C \setminus \{p\}$ (or, more precisely, its image in $X_\Sigma'$) by $X'$. We require that the exceptional divisor in $X_{\Sigma'}$ associated with $p'$ has the same symplectic size as $\pi^{-1} (p)$.
By construction, the inverse image $L'$ of $L$ in $X'$ is a Lagrangian torus fiber of the SYZ fibration on $X'$. 
Then we have the following commutative diagram
\begin{equation*}
\xymatrix{ X' \ar[rr]^{\tilde{F}} \ar[d] && X \ar[d]^{\pi}\\
X_{\Sigma'} \ar[rr]^{F} && X_{\Sigma}}
 \end{equation*}
where $\tilde{F}$ is a symplectomorphism that lifts $F$. Clearly $F$ maps $L'$ to $L$, and hence, it suffices to prove the weakly unobstructedness of $L'$, which follows from the induction hypothesis.
\end{proof}

\begin{remark}\label{rmk:nodalweakunobs}
We speculate the analogous is true for nodal fibers i.e. any immersed generators form weak bounding cochains for any bulk-parameters. This is the case when $X$ is semi-Fano by degree reason. Notice that the left hand side of \eqref{eqn:bulkdefunobs} can have terms in degree other than $0$ only when there exist contributions from  negative Maslov index disks.
\end{remark}

%-------------------------------------------------------------  ⤊ ⤊ ⤊   Section 2  ⤊ ⤊ ⤊  ----------------------------------------------------------------%
%-------------------------------------------------------------------------------------------------------------------------------------------------------%

%-------------------------------------------------------------------------------------------------------------------------------------------------------%
%-------------------------------------------------------------  ⤋ ⤋ ⤋   Section 3  ⤋ ⤋ ⤋  ----------------------------------------------------------------%

\section{Critical points of Laurent Polynomials}\label{sec:tropcrit}

In this section, we look into the problem of finding the number of critical points of a given Laurent polynomial. In fact, a well-defined combinatorial formula already exists, given in terms of Newton polytopes, due to Kushnirenko \cite{Kush}(or see Theorem \ref{Kushnirenko}). However, we require a more refined version of this formula, as we wish to further estimate the non-Archimedean valuations of critical points. For this purpose, we will study the tropicalization of the Laurent polynomial, and eventually prove that the critical points must sit over vertices of the resulting tropical curve (Proposition \ref{prop:tropcrit}). This is essentially known from the work of Gonz\'{a}lez-Woodward \cite{GW}, although not phrased in the realm of tropical geometry. Our local argument is also based on the energy induction thereof. On the other hand, our mirror potential has been calculated tropically as shown in \cite{BCHL}, and hence the approach here fits more into our geometric setup.

\subsection{The Kushnirenko Theorem}\label{Kush.}

We first briefly review the classical result of Kushnirenko, which determines the number of critical points of a given Laurent polynomial. Let $W \in \Bbbk[z_{1}^{\pm}, \ldots , z_{n}^{\pm}]$ be a Laurent polynomial with coefficients in some closed field $\Bbbk$ of characteristic $0$ (we will take $\Bbbk = \Lambda$ in our geometric setup). Identifying the lattice $\mathbb{Z}^{n}$ with Laurent monomials, i.e. $z^{v} = z_{1}^{v_{1}}\cdots z_{n}^{v_{n}}$ for $v \in \mathbb{Z}^{n}$, we write 
\[
W =  \sum_{v \in \mathbb{Z}^{n}} \alpha_{v}z^{v}.
\]
Recall that the \emph{Newton polytope} $\Delta_{W}$ of $W$ is defined as the convex hull of the support of $W$, $\supp W := \{\,  v \in \mathbb{Z}^{n}   \;\, \rvert \;\,  \alpha_{v} \neq 0  \,\}$. 

\begin{definition}
A Laurent polynomial $W$ is said to be \emph{convenient} if the point $0 \in \mathbb{R}^{n}$ does not belong to any supporting plane of all $d$-dimensional faces  of $\Delta_{W}$ for $1\leq d \leq n-1$. 
\end{definition}
Note that when $n=2$, a Laurent polynomial $W$ is convenient if and only if two adjacent vertices $v_{1}, v_{2} \in \partial \Delta_{W}$ do not lie on a line through the origin.

For any closed subset $F$ of $\mathbb{R}^{n}$, we write $W_{F} := \sum_{v \in F \cap \mathbb{Z}^{n}}\alpha_{v}z^{v}$. A Laurent polynomial $P$ is \emph{non-degenerate} if for any closed face $F$ of $\Delta_{P}$, the system 
\[
	\Bigl( z_{1}\frac{\partial W_F}{\partial z_{1}}\Bigl)_{F} = \dots  = \Bigl( z_{n}\frac{\partial W_F}{\partial x_{n}}\Bigl)_{F}=0
\]
has no solution in $(\Bbbk^\times)^n$. In this case, the Newton polytope $\Delta_{W}$ is also called non-degenerate.

%	In our situation, $\Bbbk = \Lambda$, that is, $\alpha_{i} = T^{a_{i}}$, where $a_{i} \in \mathbb{R}$.

\begin{thm}\cite[Theorem III]{Kush} \label{Kushnirenko}
Let $\Bbbk$ be an algebraically closed field with characteristic 0. If a convenient Laurent polynomial $W$ is non-degenerate,
\[
\rvert \mathrm{Crit}(W) \rvert = n!V_{n}(\Delta_{W})
\]
where $\rvert \mathrm{Crit}(W) \rvert$ is the number of critical points of $P$ counted with multiplicity, and $V_{n} (\Delta_W)$ is the $n$-dimensional volume of the Newton polytope $\Delta_W$.
\end{thm}

 We are mainly interested in the case $n=2$. In this case, it is not difficult to see that once $P$ is convenient, it is non-degenerate for generic choice of coefficients. Analogous statement should be true in arbitrary dimension since $\Bigl( z_{1}\frac{\partial f}{\partial z_{1}}\Bigl)_{F} = \dots  = \Bigl( z_{n}\frac{\partial f}{\partial x_{n}}\Bigl)_{F}=0$ is overdetermined.

\begin{example}
Consider $W(z) = a z_{1}z_{2} + b z_{1}z_{2}^{2} +  c z_{1}^{2}z_{2}^{2} $ for some $a,b,c \in \Bbbk$. $W$ has no critical points for generic $a,b,c$, whereas $\Delta_W$ has a positive volume. Indeed, $P$ is not convenient due to the two terms $z_{1} z_{2}$ and $z_{1}^{2}z_{2}^{2}$ that are colinear. It has infinitely many critical points for $b=0$, on the other hand.
\end{example}

\subsection{Tropicalization of Laurent Polynomials and the Duality Theorem}\label{subsec:tropdual}

We recall some basics from tropical geometry (mostly without proof). Readers are referred to any expository article in tropical geometry for more details, for e.g., see \cite{BIMS}. Throughout, we fix our base field as the Novikov field $\Lambda$.
Let us take a Laurent polynomial $W(z_{1},\cdots,z_{n})$ over $\Lambda$, and write it as 
$W = \sum_{i=1}^{N}a_{i} z^{v_{i}}$ for $v_i \in \mathbb{Z}^n$. Then its tropicalization is a polyhedral complex in $\mathbb{R}^n$ (e.g., a piecewise linear graph on $\mathbb{R^2}$ when $n=2$) defined as follows:

\begin{defn}\label{Tropicalization}
The \emph{tropicalization} of a Laurent polynomial $W = \sum_{i =1}^{N}a_{i} z^{v_{i}}$ in $\Lambda[z_{1}^{\pm}, \cdots, z_{n}^{\pm}]$ is a subset of $\mathbb{R}^{n}$ given as the corner locus of the piecewise linear function 
\[
	\tau_W : \mathbb{R}^n \to \mathbb{R}, \quad \,\,
	(x_{1},\cdots, x_{n}) \mapsto \min_{i} \, 
	\Bigl\{\,   \lambda_{i} + \left\langle v_{i} \, , \, (x_{1},\cdots,x_{n}) \right\rangle    \;\, \rvert \;\,  i \in \support W  \,\Bigr\}
\]
where $\lambda_i = \val (a_i)$ and $\support W := \bigl\{v_i \in \mathbb{Z}^n \mid  a_i \neq 0\bigr\}$. We denote the tropicalization of $W$ by $\mathrm{Trop}(W)$.
\end{defn}

The tropicalization $\mathrm{Trop}(W)$ consists of polygonal faces of different dimensions. For generic coefficients (hence generic $\lambda_i$), every $(n-1)$-dimension face of $\mathrm{Trop}(W)$ is the locus where exactly two linear functions $\lambda_i + \langle v_i, (x_{1},\cdots, x_{n})$ coincide. We assume that this is always the case from now on. One can assign a \emph{weight} $w_{E}$ to each $(n-1)$-face in the following way. Suppose $\lambda_i + \langle v_i, (x_{1},\cdots, x_{n}) \rangle$ and $\lambda_j + \langle v_j, (x_{1},\cdots, x_{n}) \rangle$ agree along $E$. Then $w_E$ is a maximal positive integer for which $\frac{v_i - v_j}{w_E}$ is an integer vector.

If a $(n-2)$-face $V$ of $\mathrm{Trop}(W)$ is incident to edges $(n-1)$-faces $E_{i_{1}},\cdots, E_{i_k}$, one has the \emph{balancing condition}
\[
	\sum_{l=1}^{k}w_{i_l}\nu_{i_l} = 0,
\]
where $w_{i_l}$ is the weight of $E_{i_l}$ and $\nu_{i_l}$ is the primitive integer vector perpendicular to $E_{i_{1}}$ such that $x+ \delta \nu_{i_l}$ belongs to $E_{i_l}$ for $x \in E_{i_l}$ and a small enough positive $\delta$. When $n=2$, $\nu_{i_l}$ is simply the outward primitive vector along the edge $E_{i_l}$  (pointing away from the vertex $V$).

\begin{remark}\label{rmk:tT}
The tropicalization of a complex hypersurface $\mathcal{C}$ in $(\mathbb{C}^\times)^n$ can be obtained as a degeneration of \textit{tropical amoebas} of a family of hypersurfaces $\mathcal{C}_{t}$ parametrized by (real powers of) $t$;
\[
	\mathrm{Trop}(\mathcal{C}) := \lim_{t\rightarrow \infty}Log_{t}(\mathcal{C}_{t})
\]
where $\mathrm{Log}_{t} : (\mathbb{C}^{*})^{n} \rightarrow \mathbb{R}^{n}$ is given by $(z_{1},\cdots, z_{n}) \mapsto (\log_{t}\rvert z_{1}\rvert,\cdots, \log_{t}\rvert z_{n}\rvert)$. Formally, our Novikov variable $T$ is related with $t$ by $t = 1 /T$, and hence, the limit $t \to \infty$ corresponds to the large structure limit $\omega \to \infty$ in the sense of substitution $T = e^{-\omega}$.

\end{remark}

There is a convenient way for determining the combinatorial type of the tropicalization $\mathrm{Trop}(W)$ using duality between $\mathrm{Trop}(W)$ and the Newton polytope $\Delta_W$. Let $\psi_W : \Delta_W \to \mathbb{R}$ be defined as the \emph{maximal} piecewise linear function satisfying
\begin{equation}\label{eqn:liftedpsi}
\psi_W (v_i) = \lambda_i (=\val (a_i)),
\end{equation} 
which is completely determined by $W$.
The domains of linearity of $\psi_W$ determines a subdivision $\mathcal{S}_W$ of $\Delta_W$ into lattice polygons. $\mathcal{S}_{W}$ is usually refered to as the \emph{Newton subdivision} of the Newton polytope $\Delta_W$. Note that the intersection of two cells in the subdivision must occur along their common lower dimensional cell.  

The tropicalization $\mathrm{Trop}(W)$ is constructed as the dual complex of the polyhedral decomposition $\mathcal{S}_W$ of $\Delta_W$. Specifically, each top-dimensional cell of $\mathcal{S}_W$ corresponds to a vertex in $\mathrm{Trop}(W)$. Two vertices of $\mathrm{Trop}(W)$ are joined by an edge, say $E$, whenever their corresponding top cells in $\mathcal{S}_W$ intersect along a shared $(n-1)$-dimensional cell, say $E^\ast$, with the edge $E$ being perpendicular to $\mathcal{S}_W$. Consequently, the subdivision $\mathcal{S}_W$ completely determines the combinatorial type of $\mathrm{Trop}(W)$ (see Figure \ref{fig:dualintcpx} for an example when $n=2$).

%-------------------------------------------------------------------------------------------------------------------------------------------------------%

\begin{figure}[h] 
	\begin{tikzpicture}[every node/.style={draw}, scale=0.55]

		\coordinate (A) at (3,0);
		\coordinate (B) at (0,3);
		\coordinate (C) at (-3,-3);
		\coordinate (D) at (-3,0);
		\coordinate (H) at (3,3);
		\coordinate (I) at (3,6);

		\draw [line width=1pt] (A) -- (B);
		\draw [line width=1pt] (C) -- (B); 
		\draw [line width=1pt] (A) -- (C); 
		\draw [line width=1pt] (D) -- (B);
		\draw [line width=1pt] (C) -- (D); 
		\draw [line width=1pt] (A) -- (B); 
		\draw [line width=1pt] (A) -- (H);
		\draw [line width=1pt] (H) -- (B);
		\draw [line width=1pt] (I) -- (B);   
		\draw [line width=1pt] (I) -- (H); 

		\filldraw (-0.9,-0.9) circle (3pt);
		\filldraw (-2.2,-0.3) circle (3pt);
		\filldraw (2,2) circle (3pt);
		\filldraw (2,4) circle (3pt);

		\draw [line width=1pt] (-0.9,-0.9) -- (-2.2,-0.3);
		\draw [line width=1pt] (-5.8,-0.3) -- (-2.2,-0.3);
		\draw [line width=1pt] (-5.1,2.7) -- (-2.2,-0.3);
		\draw [line width=1pt] (-0.9,-0.9) -- (0.3,-3.3);
		\draw [line width=1pt] (2,2) -- (-0.9,-0.9);
		\draw [line width=1pt] (2,2) -- (2,4);
		\draw [line width=1pt] (0.5,5.5) -- (2,4);
		\draw [line width=1pt] (2,4) -- (5,4);
		\draw [line width=1pt] (2,2) -- (5,2);
	\end{tikzpicture}
\caption{The dual subdivision $\mathcal{S}_{W}$ determined by $\Gamma_{W}$}
\label{fig:dualintcpx}
\end{figure}
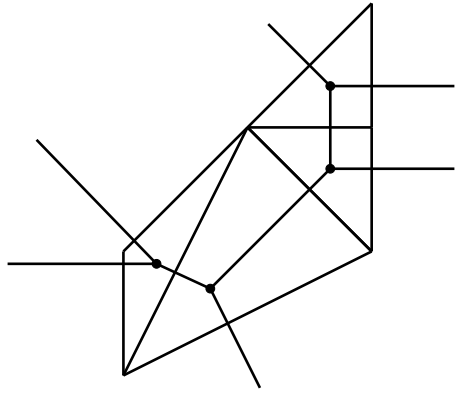

%-------------------------------------------------------------------------------------------------------------------------------------------------------%

The following Proposition summarizes our discussion so far.

\begin{prop}\label{prop:Duality}
The tropicalizaton $\mathrm{Trop}(W)$ is \textit{dual} to the subdivision $\mathcal{S}_{W}$ of $\Delta_{W}$ determined by the domain of linearity of $\psi_W : \Delta_{W} \to \mathbb{R}$ defined in \eqref{eqn:liftedpsi}.
\end{prop}
\begin{remark}
Due to difference of conventions as in Remark \ref{rmk:tT}, $\mathrm{Trop}(W)$ should be rotated $180$ degrees to become precisely the dual of the subdivision $\mathcal{S}_{W}$.
\end{remark}
Observe that the weight of a top dimensional face $E$ of $\mathrm{Trop}(W)$ equals the integral length of its dual 1-cell $E^\ast$ in $\mathcal{S}_W$. Similarly, we define the weight $|V|$ of a vertex $V$ of $\mathrm{Trop}(W)$ to be the volume $|V|:= n! vol_{n} (F^\ast)$ of the $n$-cell $F^\ast$ in $\mathcal{S}_W$ dual to $V$. 
If exactly $(n+1)$ top dimensional faces intersect at a vertex $V$ of $\mathrm{Trop}(W)$, its multiplicity equals $w_{i_{1}}\cdots w_{i_{n}} \rvert \mathrm{det}(\nu_{i_{1}} \cdots  \nu_{i_{n}})\rvert$ where $w_{j}$ and $\nu_j$ are the weight and primitive of the face $E_j$ incident to $V$, $j=i_{1},i_{2},\cdots, i_{n}$. The balancing condition ensures that the multiplicity is well-defined, i.e. independent of the choice of $n$ incident edges.

Each connected component of $\mathbb{R}^n \setminus \mathrm{Trop}(W)$ is the domain of linearity of $\tau_W$, and hence, is associated with a monomial $a_i z^{v_i}$ of $W$ (or the corresponding linear function $\lambda_i + \langle v_i, (x_{1},\cdots, x_{n})\rangle$). That is, if $\val (z)$ falls in such component, then the \emph{valuation of $a_i z^{v_i}$ becomes the minimum} among the valuations of all monomials of $W$ (this directly follows from the definition of $\mathrm{Trop}(W)$). 

Having this interpretation in mind, the dual top dimensional face $F^\ast$ in $\mathcal{S}_W$, dual to a vertex $V \in \mathrm{Trop}(W)$, can be described as follows:
Let $V$ be a vertex of $\mathrm{Trop}(W)$, and suppose that $a_{i_{1}} z^{v_{i_{1}}}, \cdots, a_{i_k} z^{v_{i_k}}$ are the monomials associated to open components adjacent to the vertex $V$. Then $F^\ast$ is precisely the Newton polytope of $\sum_{l=1}^k a_{i_l} z^{v_{i_l}}$. If $\val (z) = V$, then the valuations of all the monomials $a_{i_{1}} z^{v_{i_{1}}}, \cdots, a_{i_k} z^{v_{i_k}}$ coincide at $z$. In other words, a vertex is where adjacent monomials simultaneously attains minimal valuation.

%-------------------------------------------------------------------------------------------------------------------------------------------------------%

\subsection{Tropicalization and Critical points of $W$}
We finally extract critical point information of $W$ from its tropicalization $\mathrm{Trop}(W)$. The argument is essentially a tropical interpretation of \cite[Theorem 4.37]{FW} or \cite[Theorem 10.4]{FOOO-T}, which relates critical points of the leading order $W_0$ (with respect to the energy, $\val $) and those of $W$ itself.

\begin{lemma}\label{Woodward}
Let $W = W_{0} + W_{1}$ be a decomposition of $W$ into the terms $W_0$ of the lowest valuations (of ``leading order") and the rest $W_{1}$.
%with  (consisting of the terms of lowest valutaion) $W_{0} = T^{\lambda_{0}}\overline{W}_{0}$, where $\overline{W}_{0}$ in a Laurent polynomial over $\mathbb{C}^{*}$ . 
\begin{enumerate}
\item[(a)]
 The lowest order term $\alpha_0$ of a critical point $\alpha$ of $W$ is a critical point of $W_0$.
\item[(b)]
Suppose that $\alpha_0 \in (\Lambda_U)^n$ is a critical point of $W_{0}$ and $W_{0}$ has non-vanishing Hessian $\mathrm{Hess}(W_{0})$ at $\alpha_0$. Then, $\alpha_0$ uniquely extends to a critical point $\alpha$ of $W$, in the sense that $\alpha$ is given by $\alpha = \alpha_0 + \alpha_{1}$ where $val (\alpha_0) < val (\alpha_{1})$. 
\end{enumerate}
\end{lemma}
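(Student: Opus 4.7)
For part (a), the plan is a leading-order comparison. Since logarithmic differentiation $z_i\partial_{z_i}$ acts on a monomial $a_v z^v$ by multiplication by the integer $v_i$, it preserves the valuation filtration; thus $z_i\partial_{z_i}W = z_i\partial_{z_i}W_0 + z_i\partial_{z_i}W_1$ is still a splitting into lowest-order and strictly-higher-order pieces. Writing $\alpha = \alpha_0 + \alpha_1 \in (\Lambda_U)^n$ with $\alpha_0 \in (\mathbb{C}^\times)^n$ the leading representative and $\val(\alpha_1) > 0$, I would substitute into $z_i\partial_{z_i}W(\alpha) = 0$ and read off the lowest valuation level. The $W_1$-contribution lives strictly higher (since $\alpha \in (\Lambda_U)^n$ preserves valuations on monomials), so the leading level is precisely $z_i\partial_{z_i}W_0(\alpha_0) = 0$, identifying $\alpha_0$ as a critical point of $W_0$.

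For part (b), the plan is a non-Archimedean Newton iteration. Seek $\alpha = \alpha_0 + \eta$ with $\val(\eta) > 0$ solving $\nabla W(\alpha) = 0$. A Taylor expansion at $\alpha_0$ gives
\begin{equation*}
0 = \nabla W(\alpha_0) + H(\alpha_0)\,\eta + R(\eta),
\end{equation*}
where $H(\alpha_0) := \mathrm{Hess}(W)(\alpha_0)$ and $R(\eta) = O(\eta^2)$. Since $\alpha_0$ kills $\nabla W_0$, we have $\nabla W(\alpha_0) = \nabla W_1(\alpha_0)$, whose valuation is at least the energy gap $e := \val(W_1) - \val(W_0) > 0$. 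The leading part of $H(\alpha_0)$ is $\mathrm{Hess}(W_0)(\alpha_0)$, invertible over $\mathbb{C}$ by the non-vanishing Hessian hypothesis, so $H(\alpha_0)$ is invertible over $\Lambda$. Rearranging gives the fixed-point equation
\begin{equation*}
\eta = -H(\alpha_0)^{-1}\bigl(\nabla W_1(\alpha_0) + R(\eta)\bigr),
\end{equation*}
which I would solve by iterating from $\eta^{(0)} = 0$.

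The valuation bookkeeping, using $\val(R(\eta)) \ge \val(W_0) + 2\val(\eta)$ and $\val(H(\alpha_0)^{-1}) = -\val(W_0)$, shows that $\val(\eta^{(k+1)} - \eta^{(k)})$ roughly doubles at each step once it exceeds $e$, yielding $T$-adic convergence to a solution $\eta$ with $\val(\eta) \ge e$. Uniqueness within the positive-valuation regime follows by the same mechanism: for two solutions $\eta,\eta'$, subtracting gives $H(\alpha_0)(\eta - \eta') = R(\eta') - R(\eta)$, and a bound $R(\eta) - R(\eta') = O\bigl(\min(\val\eta,\val\eta')\bigr)\cdot(\eta-\eta')$ forces $\eta = \eta'$ after inverting $H(\alpha_0)$.

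The main (and essentially only) obstacle is this valuation bookkeeping, since $H(\alpha_0)^{-1}$ carries a negative valuation that must be compensated by the gap $e$ at every step of the iteration; once the first correction $\eta^{(1)} = -H(\alpha_0)^{-1}\nabla W_1(\alpha_0)$ is shown to lie in $\val \ge e$, the quadratic nature of $R$ takes over and propagates convergence automatically. Conceptually, this lemma is the non-Archimedean Morse/implicit function theorem adapted to the Novikov setting, mirroring \cite[Theorem 4.37]{FW} and \cite[Theorem 10.4]{FOOO-T}.
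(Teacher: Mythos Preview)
Your proposal is correct and matches the paper's own treatment, which only sketches the argument: expand $\alpha = \alpha_0 + \cdots$ in energy-increasing order and solve for the higher-order corrections one step at a time, each step reducing to a linear equation governed by $\mathrm{Hess}(W_0)(\alpha_0)$. Your Newton-iteration packaging and explicit valuation bookkeeping are in fact more detailed than what the paper provides, but the underlying idea and the references to \cite{FW} and \cite{FOOO-T} are identical.
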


In particular, if $W_0$ is Morse, then there exists a valuation-preserving one-to-one correspondence between the set of critical points of $W_0$ and that of $W$. By Kushnirenko's theorem (Theorem \ref{Kushnirenko}),   $W_0$ is Morse if and only if it has precisely $2\Delta_{W_0}$ mutually distinct critical points. 
Lemma \ref{Woodward} can be proven by expanding a critical point $p=p_0 + \cdots$ in energy-increasing order with its first order term being one of the critical points $p_0$ of $W_0$, and solving higher order terms order-by-order. At each stage in solving higher order terms, one essentially solves a linear equation determined by the Hessian of $W_0$ at $p_0$. It is thus clear that the valuation of $p$ agrees with that of $p_0$.\\

We first clarify how the decomposition $W= W_{0} + W_{1}$ arises in our context more precisely. In fact, the energy of the terms in $W : (\Lambda^\times)^n \to \Lambda$ depends on the the point in $(\Lambda^\times)^n$ at which we expand $W$ (our main interest is the case $n=2$). A point in $val^{-1} (x_{1}, \cdots, x_{n})$ ($x_i \in \mathbb{R}^n$, $i=1,\cdots, n$) can be written as $(T^{x_{1}} \uz_{1}, \cdots, T^{x_{n}} \uz_{n})$ with $val(\uz_{1}) =\cdots = val(\uz_{n}) =0$. We then take the restriction of $W$ at $val^{-1} (x_{1},\cdots, x_{n})$, that is,
\[
	W_{(x_{1},\cdots, x_{n})} : (\Lambda_U)^n \to \Lambda \qquad (\uz_{1},\cdots, \uz_{n}) \mapsto W(T^{x_{1}} \uz_{1},\cdots, T^{x_{n}} \uz_{n}).
\]
Notice that only coefficients of $W_{(x_{1},\cdots,x_{n})} (\uz_{1},\cdots, \uz_{n})$ can carry nonzero valuation. It is therefore reasonable to write $W_{(x_{1},\cdots,x_{n})}$ as the sum $W_{0} + W_{1}$, where $W_0$ consists of the terms in $W_{(x_{1},\cdots,x_{n})}$ with minimum valuation coefficients. Note that up to a common factor, $W_0$ is essentially a Laurent polynomial on $(\mathbb{C}^\times)^n$, or $T^{-\delta} W_0 \in \mathbb{C}[\uz_{1}^{\pm}, \cdots, \uz_{n}^{\pm}]$ for the lowest valuation $\delta$ of $W$.

One useful consequence of Lemma \ref{Woodward} (a) is that the tropicalization of $W$ confines the possible locations of critical points of $W \in \Lambda [z_{1}^\pm, \cdots, z_{n}^\pm]$. Primarily, we cannot have any critical points that project to $\mathbb{R}^n \setminus \mathrm{Trop}(W)$ under $val$. Indeed, as observed in subsection \ref{subsec:tropdual}, the lowest order terms $W_0$ of $W_{(x_{1},\cdots, x_{n})}$ for $(x_{1},\cdots, x_{n}) \in \mathbb{R}^n \setminus \mathrm{Trop}(W)$ consists a single monomial assigned to the connected component of $\mathbb{R}^n \setminus \mathrm{Trop}(W)$ containing $(x_{1},\cdots,x_{n})$. Clearly, a single Laurent monomial cannot have any critical point in $(\Lambda_U)^n$. 

In general, consider any $k$-dimensional face $E$ of $\mathrm{Trop}(W)$ for $k>0$. Along $E$, the lowest order terms $W_0$ of the restriction $W_{(x_{1},\cdots,x_{n})}$ ($(x_{1},\cdots,x_{n}) \in E$) are those monomials assigned to the components of $\mathbb{R}^n \setminus \mathrm{Trop}(W)$ whose boundaries contain $E$. Note that the exponent $(v_{1},\cdots, v_{n})$ of any such monomial in $W_0$ is orthogonal to the $k$-dimensional linear subspace of $\mathbb{R}^n$ parallel to $E$. (They lie in a cell of $\mathcal{S}_W$ orthogonal to $E$.) Therefore if $W_0$ of the restriction of $W$ along $E$ is convenient, then generically it has no critical points since its Newton polytope is degenerate. Hence we have proven:

\begin{prop}\label{prop:tropcrit}
Let $W$ be a Laurent polynomial over $\Lambda$, $W \in \Lambda [ z_{1}^\pm,\cdots,z_{n}^{\pm}]$.
\begin{enumerate}
\item[(a)] If $z\in (\Lambda^\times)^n$ is a critical point of $W$, then $val(z) \in Trop (W)$.
\item[(b)]
Suppose that the lowest order terms of $W$ restricted at a point in the interior of a positive dimensional cell $E$ of $\mathrm{Trop}(W)$ is convenient. Then the interior of $E$ cannot support a critical point of $W$. 
\item[(c)]
In particular, if the lowest order terms of $W$ is convenient on the interior of any positive dimensional cell, then the critical point of $W$ should map to one of vertices of $\mathrm{Trop}(W)$ under $val$.
\item[(d)]
Suppose that the lowest order terms of $W_{(x_{1},\cdots,x_{n})}$ is convenient for some vertex $V= (x_{1},\cdots,x_{n})$ of $\mathrm{Trop}(W)$. Then for generic coefficients, there exists exactly $|V|$-many non-degenerate critical points over the vertex $V$.
\end{enumerate}
\end{prop}

In locating the critical point of a Laurent polynomial $W$, the ideal situation is therefore when $W$ satisfies the condition in Proposition \ref{prop:tropcrit} (c), and additionally, the restriction of $W$ at each vertex of $\mathrm{Trop}(W)$ is convenient as in (d). 

\begin{definition}\label{def:locconv}
A Laurent polynomial $W$ over $\Lambda$ is said to be \emph{locally convenient} if it satisfies the condition in (c) and (d) of Proposition \ref{prop:tropcrit}, i.e., any subcollection of monomials in $W$ determined by a cell in the subdivision $\mathcal{S}_W$ of its Newton polytope forms a convenient Laurent polynomial. 
\end{definition}

In this case, for generic coefficients, the critical points can be essentially read off from the decomposition $\mathcal{S}_W$ of the Newton polytope of $W$. Recall  $|V| = n! Vol_{n} (F^\ast)$ where $F^\ast$ is a top cell in $\mathcal{S}_W$ dual to $V$.

\begin{remark}
We remark that the condition in Proposition \ref{prop:tropcrit} (c) is not guaranteed for generic coefficients. For instance, $W = a z_{1} +  \frac{b}{z_{1}} +c z_{2} +  \frac{d}{z_{2}}$ (the mirror $\mathbb{P}^1 \times \mathbb{P}^1$) satisfies the condition only when $a=b=c=d$. Such cases will be dealt with separately.
\end{remark}

A more straightforward approach in locating critical points would be to investigate the intersection $\mathrm{Trop}(z_{1}\frac{\partial}{\partial z_{1}}W) \cap \mathrm{Trop}(z_{2}\frac{\partial}{\partial z_{2}}W)$ of the tropicalization of the log partials of $W$, as in \cite{KLS}. However, it poses some difficulties for our purpose, such as ensuring transversality of the intersection or determining how many (if any) critical points are supported by non-displaceable fibers and if they are non-degenerate.

%-------------------------------------------------------------  ⤊ ⤊ ⤊   Section 3  ⤊ ⤊ ⤊  ----------------------------------------------------------------%
%-------------------------------------------------------------------------------------------------------------------------------------------------------%

%-------------------------------------------------------------------------------------------------------------------------------------------------------%
%-------------------------------------------------------------  ⤋ ⤋ ⤋   Section 4  ⤋ ⤋ ⤋  ----------------------------------------------------------------%

\section{Singularities of mirror Landau-Ginzburg potentials}\label{sec:singLG}

Combining the existing works of \cite{FOOO10b} and \cite{Bayer}, one has that for a toric surface $X_\Sigma$, the bulk-deformed potential $W^{\mathfrak{b}}_{\Sigma}$ has as many non-degenerate critical points as the rank of quantum cohomology of $X_\Sigma$ (for generic parameters).
The main objective of this section is to generalize this result to a larger class of algebraic surfaces, mainly non-toric blowups $X$ of toric surfaces, and establish closed string mirror symmetry. While our main result pertains to bulk-deformed potentials (with restricted classes of bulk-insertions when $X$ is non-Fano), we initially concentrate on potentials without bulk for sake of simplicity. Towards the end of the section, we extend our findings to encompass potentials with bulk $\mathfrak{b} \in H^{4}(X;\Lambda_{+})$.

Our key observation is presented in Theorem \ref{prop:pindownclasses}, which essentially asserts that a holomorphic disk with ``excessive energy'', such as disks with sphere bubbles, has little contribution to geometric critical points (Definition \ref{def:geomcrit}) or, at least no contribution to their locations. This allows us to handle situations where the full potential may be unknown.

For instance, we will see (Theorem \ref{prop:pindownclasses}) that the ``minimal energy terms of $W$" essentially determine the critical behavior of $W$. These terms consist of the Hori-Vafa part of $W$ together with contributions from what we will call basic broken disks (see Definition \ref{def:basicbd} below). We apply the combinatorial tools developed in Section \ref{sec:tropcrit} to this leading order part of $W$.\footnote{In the absence of non-degeneracy, however, one needs to take into account the second order terms as well, which requires a more delicate control on energies. See Proposition \ref{prop:gnc} below.}
We remark that Gonzalez-Woodward \cite{GW} also used a similar method to deduce the analogous result for the Hori-Vafa mirror potentials of toric orbifolds.

We first spell out our geometric setup in detail, mainly to fix notations and terminologies.

%-------------------------------------------------------------------------------------------------------------------------------%
% Setup
%-------------------------------------------------------------------------------------------------------------------------------%

\subsection{Geometric setup}

Let $(X_{\Sigma},D_\Sigma)$ be a smooth compact toric surface (not necessarily Fano) from the complete fan $\Sigma$ generated by $\nu_i$, $i=1,\cdots, N$. Denote by  $D_{\Sigma,1},\dots, D_{\Sigma, N}$ their corresponding (irreducible) toric divisors. Then its mirror potential takes the form of
\begin{equation}\label{eqn:potential1}
	W_{\Sigma} = \underbrace{\sum_{i=1}^{N} a_i z^{\nu_{i}}}_{ W^{HV}_{\Sigma} } + \sum_{v} a_{v}z^{v}.
\end{equation}
The first summand is the Hori-Vafa potential $W^{HV}_{\Sigma}$, contributed by disks intersecting one of $D_{\Sigma,i}$ (see Example \ref{ex:toric}), whereas the second summand consists of disks with sphere bubbles whose disk component possibly has a higher Maslov index. If we set its moment polytope to be
\begin{equation}\label{eqn:momentpoly}
 \Delta_{\Sigma}:= \bigl\{ x : \langle x, \nu_i \rangle \geq -\lambda_i \quad i=1, \cdots, N\bigr\}
\end{equation}
as in Example \ref{ex:toric} (which amounts to fixing symplectic affine coordinates), then $a_i$ is given by $a_i = T^{\lambda_i}$.  
We will frequently use the map 
\[
	\val : (\Lambda^\times)^2  \to \mathbb{R}^2,
\]
and a point $z$ will be called \emph{geometric} in the sense that it corresponds to a Lagrangian torus fiber (see \eqref{eqn:mirrorzi}) if $x=\val(z)$ satisfies \eqref{eqn:momentpoly}, i.e. $x$ lies in $\Delta_\Sigma$. 
Note that this is consistent with Definition \ref{def:geomcrit}.

A sequence of non-toric blowups on $X_\Sigma$ gives rise to a surface $X$, coupled with an anticanonical divisor $D$ proper-transformed from $D_\Sigma$. We require that the symplectic sizes of all exceptional divisors are \textit{sufficiently small}, and that they are generic compared to the toric divisors. That is, we choose our symplectic form $\omega$ on $X$ in the class
\[
	[\omega]=[\pi^\ast \omega_\Sigma - \sum e_i E_i ]
\]
as in \ref{subsec:toricmodel} where $E_i$'s are exceptional classes, and we assume $\epsilon_i$'s are generic and arbitrarily small. We have a Lagrangian torus fibration with nodal fibers (in one-to-one correspondence with $\{E_i\}$) which is special away from a small neighborhood of the branch-cut at each nodal fiber. 

Consider a Maslov $2$ disk $u$ in $X$ belonging to the class $\widetilde{\beta_{\nu_j}} + \sum_{i} a_i \beta_i$, where $\widetilde{\beta_{\nu_j}} $ represents the proper transform of the basic disk $\beta_{\nu_j}$ that intersects $D_{\Sigma,j}$ exactly once away from the blowup center, and $\beta_i$ is the proper transform of Maslov $2$ disk passing through the blowup point associated with $E_i$ exactly once (see \ref{subsec:LGtoricmodel}). Then the area of $u$ with respect to the above choice of symplectic form is 
\begin{equation}\label{eqn:areabrd}
\omega_\Sigma(\beta_{\nu_j}) + \sum_i a_i \omega_{\Sigma} (\beta_i) - \sum_i a_i \epsilon_i.
\end{equation}
Here, we used the same notation $\beta_i$ to denote the disk class in  $X_{\Sigma}$ which is proper-transformed to $\beta_i$.
Notice that the symplectic area of $u$ is strictly smaller that that of its projection to $X_\Sigma$ (the sum of first two terms in \eqref{eqn:areabrd}), and the area difference limits to zero as $\epsilon_i$ goes to zero. 
Under holomorphic/tropical correspondence, such $u$ corresponds to a broken line whose infinite edge is parallel to $\nu_j$, and bends nontrivially at the wall induced by $E_i$ if $a_i \neq 0$. To distinguish from basic Maslov $2$ disks lifted from $X_\Sigma$, we will refer to such disks as \emph{broken disks} (of Maslov index $2$).

Among Maslov $2$ broken disks in $X$, we will be particularly interested in those described as follows. Recall that the scattering diagram associated with a special Lagrangian fibration on $X\setminus D$ has a special chamber $R_0$ enclosed by groups of parallel initial rays (Lemma \ref{lemma:centralchamber}). Let $p_{i;1},\cdots,p_{i;l}$ be the points of intersection of the blowup center and the toric divisor $D_{\Sigma,i}$, each of which gives rise to a wall (a ray)  parallel to $\nu_i$. Due to our specific choice of locations of $p_{i;j}$ (as outlined in \ref{subsec:toricmodel}, Figure \ref{fig:blowuppts1}), these walls are  close enough to the divisor $D_{\Sigma,i-1}$ where we label generators $\nu_i$ of $\Sigma$ counterclockwise. Then the basic disks stemming from $D_{\Sigma,i-1}$ are glued with Maslov $0$ disks (proper transforms of basic disks in $X_{\Sigma}$ hitting $p_{i;j}$'s), resulting in Maslov index 2 broken disks that could potentially enter the region $R_0$. Tropically, these disks are represented by broken lines approaching infinity along the $-\nu_{i-1}$-direction and bending only when encountering the walls induced by the blowups at $p_{i;j}$'s.

\begin{definition}\label{def:basicbd}
Let $p_{i;1},\cdots,p_{i;l}$ be the part of the blowup center lying in $D_{\Sigma,i}$ for $\pi:X \to X_\Sigma$. A \emph{basic broken disk} is defined to be the Maslov $2$ holomorphic disk in $X$ which is a proper transform of a disk in $X_\Sigma$ that hits $D_{\Sigma,i-1}$ exactly once and intersects $D_{\Sigma,i}$ possibly multiple times but only at $p_{i;j}$'s. See Figure \ref{fig:basicbrokendisks}.
%and $x_{i;1},\cdots, x_{i;l}$ be 
%A Maslov $2$ holomorphic disk in $X$ corresponding to the tropical curve glued from the initial rays from $x_{i;j}$
%Basic broken disks are 
\end{definition}

Observe that basic broken disks are in the class $\beta_{\nu_{i-1}} + k \beta_{\nu_i}$ (more precisely its proper transform under $\pi:X \to X_{\Sigma}$) where $k$ is the number of bends in their corresponding broken lines. These disks are responsible for the terms $z^{\partial(\beta{\nu_{i-1}} + k \beta_{\nu_i})} = z^{\nu_{i-1} + k \nu_i}$ in the potential. We will see that these basic broken disks have minimal energies among all Maslov $2$ disks in $X$ when the Lagrangian boundary is positioned close enough to the corner formed by $D_{\Sigma,i-1}$ and $D_{\Sigma,i}$.

%--------------------------------------------------------------------------------------------------------------------------------------------------------%
\begin{figure}[h]
\centering
%=================================%
%=================================%
\subcaptionbox{%
	Tropical disks corresponding to basic broken disks.
	\label{basicbrokendisks}%
	}
	{\makebox[0.45\linewidth][c]
		{\begin{tikzpicture}[scale=0.65]
				
		\draw[line width=0.7pt, dashed] (-1,-2)--(-1,6);
		\draw[line width=0.7pt, dashed] (1,-2)--(1,6);
		\draw[line width=1pt] (-4,0)--(-1,0)--(1,2)--(2.5,5);
		\draw[line width=1pt] (-4,2)--(-1,2)--(2.5,5);
		\filldraw (2.5,5) circle (1pt);
		\draw[line width =0.7pt] (-1,-2) node[cross] {};
		\draw[line width =0.7pt] (1,-2) node[cross] {};

		\end{tikzpicture}%
	}
	}\hfill%
%=================================%
%=================================%
\subcaptionbox{%
	Tropical disks corresponding to non-basic broken disks.
	\label{nonbasicbrokendisks}%
	}
	{\makebox[0.45\linewidth][c]	
		{\begin{tikzpicture}[scale=0.65]
		
		\draw[line width=0.7pt, dashed] (-1,-2)--(-1,6);
		\draw[line width=0.7pt, dashed] (-4,1)--(4,1);

		\draw[line width=1pt] (-4,0)--(-1,0)--(0,1)--(0,4);

		\filldraw (0,4) circle (1pt);
		\draw[line width =0.7pt] (-1,-2) node[cross] {};
		\draw[line width =0.7pt] (4,1) node[cross] {};
	
		\end{tikzpicture}
		}
	}%
%=================================%
%=================================%
\caption{}
\label{fig:basicbrokendisks}
\end{figure}%--------------------------------------------------------------------------------------------------------------------------------------------------------%

In the chamber $R_0$, the symplectic affine coordinates are pulled back from $X_\Sigma$, providing the mirror coordinates $z=(x,y)$. Once coordinates are fixed, one can write down the projection of $z \in \check{Y}$ onto the base $B_{reg}$ as a map from $\check{Y}$ to $\mathbb{R}^2$, which we still denote by $\val$. As discussed in \ref{subsec:geomcrit}, this is possible away from a small neighborhood of the branch-cut from each nodal fiber.

Throughout this section, we will work with the potential obtained by counting disks bounding Lagrangians sitting over this chamber. As discussed in \ref{subsec:geomcrit}, geometric interpretation, within the context of SYZ mirror symmetry, is present only when considering geometric critical points of the potential $W$ that resides in the SYZ base. Namely, while $W$ can a priori be defined on a bigger domain, we will only be interested in critical points whose valuations live in $B_{reg} \approx \Delta_\Sigma$, the moment polytope of $X_\Sigma$. It turns out that when $R_0$ sufficiently covers the interior of the moment polytope (expressed in symplectic affine coordinates), it can be proven that every geometric critical point indeed lies within $R_0$ (Lemma \ref{lem:nooutr0}).
%there are critical points of $W$ that are located arbitrarily close to the boundary of $\Delta_\Sigma$. Note that we include these points as geometric ones as well. This is along the same spirit as in \eqref{subsubsec:globalizecoord}, yet with a slightly more subtle point.
%We shall explain their Floer theoretic meaning at the end of the section, see \ref{subsec:geomnon-toric}. 

%Recall from \ref{subsubsec:globalizecoord} that the number of critical points is invariant under the wall-crossing formula, endowing us the freedom to select any chamber for analysis. For our purposes, we fix the chamber of interest to be the \emph{central chamber}, the region $R_0$ described in Lemma \ref{lemma:centralchamber}. 

%-------------------------------------------------------------------------------------------------------------------------------------------------------%

\subsection{Geometric critical points and their energy minimizing disks.}

As mentioned earlier, working with non-Fano surfaces poses several challenges, the obvious one being sphere bubbling phenomena due to divisors with negative Chern number. This results in (possibly infinite) correction terms being added to the Hori-Vafa potential, and it is difficult to write down explicit formulas for the full potential. While there are a few examples where such formulas have been obtained, they are often limited to specific examples or require indirect methods (see \cite[3.2]{auroux09} for an example). 
%However, our approach does not revolve around explicitly deriving the full potential. Instead, our main focus lies in identifying and locating the desired number of Morse critical points within the moment polytope.

Surprisingly, however, it turns out that the few terms with minimal energy determine the critical behavior of the potential in our situation. In fact, the precise expressions of higher energy terms in the potential are not required for our purpose, as far as the locations of critical points are concerned.
%
%long as the induction argument (Lemma \ref{Woodward}) for solving the critical point equation works, which only depends on the shape of minimal energy terms. It is worthwhile to mention that

%Recall that from the discussion following Proposition \ref{prop:Duality}, a vertex $V$ of $\mathrm{Trop}(W)$ is where three or more monomials achieve the minimum valuation at $z$ with $\val (z)=V$. Together with Proposition \ref{prop:tropcrit}, we have the following geometric description.  
We will use (a) of Proposition \ref{prop:tropcrit} crucially, which says that, under $\val:X\setminus D \to B$ (defined away from branch-cuts),
a critical point of $W$ maps to a point where at least two (or three if $\mathcal{S}_{W}$ is  locally convenient) Maslov 2 disks attain the minimum energy simultaneously. More concretely, if $z=\alpha$ is a critical point of $W$, then there exists distinguished classes $\beta_{1},\cdots,\beta_{m \geq 2} \in H_{2}(X,L_{\val(\alpha)})$ supporting Maslov index 2 (stable) disks, such that at $z=\alpha$,
%subset $\{\xi_{1},\cdots, \xi_m\}$ of $\support W$ with $m \geq 2$ such that
\begin{equation}\label{eqn:minenergyxi}
	 \omega(\beta_{1}) = \cdots =  \omega(\beta_m) \leq  \omega(\beta),
\end{equation}
for all  $\beta$ supporting Maslov index $2$ disks. For obvious reasons, these disks of class $\beta_{1},\dots,\beta_m$ will be referred to as \emph{energy minimizing disks} at $\alpha$.
They give rise to monomials $T^{\delta (\beta_i)} z^{\partial \beta_i}$ for $1\leq i \leq m$ in the potential, where
\begin{equation}\label{eqn:deltabetai}
\delta (\beta_i) := \omega(\beta_i) - \langle \val(\alpha), \partial \beta_i \rangle.
\end{equation}
For instance, we have $\delta(\beta)= \lambda_i$ when $\beta$ is a proper transform of a basic disk in class $\beta_{\nu_i}$, but does not intersect any points in the blowup center. For simplicity, disks in such classes will still be referred to as basic disks when  there is no danger of confusion.
In general, $\delta(\beta)$ for a class $\beta$ supporting a stable holomorphic disk away from exceptional divisors is a linear combination of $\lambda_j$ and $\omega(D_{\Sigma,j})$ ($j=1,\cdots,N$) over nonnegative integers.
%
%{\color{red} $\Lambda_{\xi_i}$ is not determined by $\xi_i$ and $\alpha$....$T^{\omega(\beta)} \alpha^{\partial \beta}$ .....$\omega(\beta) = \sum_i a_i \langle \val(\alpha), \nu_i \rangle + a_i \lambda_i = \langle \val(\alpha),\partial \beta \rangle + \sum a_i \lambda_i ...$ when $\beta= \sum_{i=1}^N a_i \beta_i$. 
%
%WRITE $\beta$ for disk class, $\partial \beta$ for lattice$=H_{1}(L_u)$. }

\begin{prop}\label{prop:pindownclasses}
Let $\alpha$ be a geometric critical point of $W$, and let $\beta_{1},\cdots, \beta_m$ be energy minimizing disks given as in \eqref{eqn:minenergyxi}, which are responsible for the terms $T^{\delta (\beta_{1})} z^{\partial \beta_i}, \cdots, T^{\delta(\beta_m)} z^{\partial \beta_i}$ with $\delta (\beta_i)$ given in \eqref{eqn:deltabetai}. Then $\beta_i$ should be of the class of a basic disk (from $X_\Sigma$) or a basic broken disk (Definition \ref{def:basicbd}). In particular, it cannot have any sphere bubbles.
\end{prop}

\begin{proof}
%Let $(x_{0},y_{0}):=\val(\alpha)=\val (\alpha)$. 
Since we are only interested in geometric critical points, the valuation $\val(\alpha)$ of $\alpha$ must satisfy 
\begin{equation}\label{condi:geo}
	\langle \val(\alpha), \nu_i \rangle  + \lambda_i \geq 0
\end{equation}
for all $i$ so that it lies inside $\Delta_\Sigma$ \eqref{eqn:momentpoly}.
On the other hand, applying \eqref{eqn:minenergyxi} to the basic disks in $X_\Sigma$ (or more precisely, their proper-transforms in $X$), we have
%recall that by Proposition \ref{prop:tropcrit}, $\alpha$ is mapped to a point of $\mathrm{Trop}(W)$, and therefore exists at least two associated lattices $\xi_{1},\dots, \xi_{m} \in \support W$ such that the monomial $T^{\lambda_{\xi_{i}}}z^{\xi_{i}}$ attains the global minimum valuation at $\alpha$. 
\begin{equation}\label{condi:trop}
\langle \val(\alpha), \partial \beta_{1} \rangle + \delta(\beta_{1})=\cdots =\langle \val(\alpha), \partial \beta_m \rangle + \delta(\beta_m) 
\leq \, \langle \val(\alpha), \nu_i \rangle + \lambda_{i}   
\end{equation}
for $i=1,\cdots, N$.
%for all $v \in \support W$.

Since $\nu_{1},\cdots,\nu_N$ generates a complete fan $\Sigma$, $\partial \beta_i := (p,q)$ is contained in some cone of the fan, say, the cone spanned by $\nu_{1}$ and $\nu_{2}$ denoted by 
\[
	\mathrm{Cone}(\nu_{1},\nu_{2}): =\bigl\{ a \nu_{1} + b \nu_{2} : a,b \in \mathbb{Z}_{\geq 0} \bigr\}.
\]
After appropriate affine coordinate change, we may assume that $\nu_{1}=(1,0)$, $\nu_{2}=(0,1)$ and $\lambda_{1}=\lambda_{2}=0$. In this case all  other $\lambda_i$'s must be strictly positive, as they represent (the limits of) energies of basic disks $\beta_{\nu_{i}}$ whose boundary lies in the fiber over the origin of the moment polytope, stemming from a divisor inside the first quadrant of $\mathbb{R}^2$.
Since $\delta(\beta_i)$ is a linear combination of $\lambda_j$'s and $\omega(D_{\Sigma,j})$'s with nonnegative coefficients, it follows that $\delta(\beta_i) \geq 0$.
 
Applying the inequalities \eqref{condi:geo} and \eqref{condi:trop} to $\nu_{1}=(1,0)$ and $\nu_{2}=(0,1)$, we see that $(x_{0},y_{0}):=\val(\alpha)$ satisfies
\begin{equation}\label{condi:standard}
	x_{0} \geq 0, \quad y_{0} \geq 0, \quad px_{0} +qy_{0}+\delta(\beta_i) \leq x_{0}, \quad px_{0} +qy_{0}+\delta(\beta_i) \leq y_{0}, \quad p,q \geq 0.
\end{equation}

%An important observation here is 
We claim that $\alpha$ is a geometric critical point only if $\delta(\beta_i) = 0$. We first assume that the sizes of exceptional divisors arising from the non-toric blowup $X \to X_\Sigma$ is zero, $\epsilon_{j}=0$, which can be thought of as $X$ equipped with a degenerate K\"{a}hler form.
% for the sake of argument. 
\vspace{0.15cm}

\noindent \emph{(Case I: $p=0$).} Note that since $(0,0) \notin \support W$, we must have $q \geq 1$. 
\[
	px_{0} +qy_{0}+\delta(\beta_i) \leq y_{0}  
\,\, \Leftrightarrow \,\, \underbrace{(q-1)y_{0}}_{\geq 0} + \underbrace{\delta(\beta_i)}_{\geq 0} \leq 0 
\,\, \Leftrightarrow \,\, (q-1)y_{0}=\delta(\beta_i)=0.
\]
\noindent \emph{(Case II: $p\geq 1$).} Similarly,
\[
	px_{0} +qy_{0}+\delta(\beta_i) \leq x_{0}
\, \Leftrightarrow \, \underbrace{(p-1)x_{0}}_{\geq 0} + \underbrace{qy_{0}}_{\geq 0} + \underbrace{\delta(\beta_i)}_{\geq 0} \leq 0
\, \Leftrightarrow \,(p-1)x_{0} = qy_{0} = \delta(\beta_i) = 0.
\]
 
If $\epsilon_j$ is nonzero, then the left hand side of inequalities above additionally have $-\epsilon_j$. Still, it can be negative only when $\delta(\beta_i)=0$ since any nonzero $\delta(\beta_i)$ is dominantly positive compared to $\epsilon$ which is arbitrarily small.

Notice that $\delta(\beta_i) = 0$ if and only if $\pi_\ast(  \beta_i) (\in H_{2}(X_\Sigma, \pi(L_u))) $ is a linear combination of $\beta_{\nu_{1}}$ and $\beta_{\nu_{2}}$'s. First of all, we see that $\beta_i$ cannot involve sphere-bubble components, as otherwise it could only be attached with some exceptional classes (in $\ker \pi_\ast$) which have positive Chern numbers contradicting $\mu(\beta_i)= 2$. 
%i.e., for a Malsov $2$ class $\beta$ in $X$, $\pi_\ast(\beta) = \beta_{\nu_i}$ if and only if $\beta$ is the proper transform of $\beta_{\nu_i}$ not intersecting the blowup center.

If $\pi_\ast (\beta_i) = \beta_{\nu_{1}}$ or $\pi_\ast(\beta_i)=\beta_{\nu_{2}}$, then it is a basic disk.
Now suppose that $\pi_\ast(\beta_i)=a_{1} \beta_{\nu_{1}} + a_{2} \beta_{\nu_{2}}$ with $a_{1},a_{2} >0$. In order for $\beta$ to be of Maslov $2$ in $X$, the only possibility is that all of $\beta_{\nu_{1}}$ and $\beta_{\nu_{2}}$ involved are vanishing thimbles emanating from nodal fibers (initial rays) except one which intersects $D_{\Sigma}$ once. Due to our special choice of the chamber $R_0$, this happens only when $a_{1}=1$ and $\beta_i$ is a class of a basic broken disk. It is obvious that basic broken lines are the only broken lines bending only at walls normal to $D_{\nu_{1}}$ and $D_{\nu_{2}}$ still entering $R_0$. In conclusion,
 \begin{equation}\label{eqn:possiblemin}
\pi_\ast(\beta_i)=\beta_{\nu_{1}}, \beta_{\nu_{2}}, \beta_{\nu_{1}} + k \beta_{\nu_{2}}
\end{equation}
where $k\geq 1$ is at most the number of blowup points located in $D_{\Sigma,2}$.
Applying the above argument on each cone $\mathrm{Cone}(\nu_{j},\nu_{j+1})$, our proof is complete.

\end{proof}

We denote by $W_{\mathfrak{min}}$ the sum of contributions of basic disks and basic broken disks. 
In other words, this is the collection of all (low energy) terms of $W$ that can possibly have the minimal energy at some geometric critical points.
More concretely, $W_{\mathfrak{min}}$ can be expressed as follows. Let $\nu_{1},\cdots, \nu_N$ in the fan $\Sigma$ be counterclockwisely ordered. We define $\mathcal{A}_j$ as
\[
	\mathcal{A}_{j} := \Bigl\{\, \beta_{\nu_{j}}+k \beta_{\nu_{j+1}} \,\, : \,\, 1 \leq k \leq \left|\mbox{blowup points in} \,\, D_{\Sigma,j+1}\right| \,\Bigr\}  %\stackrel{\partial}{\longrightarrow} \mathrm{Cone}(\nu_{j},\nu_{j+1}),
\]
so that $\partial \mathcal{A}_j \subset \mathrm{Cone}(\nu_{j},\nu_{j+1})$. In other words, $\mathcal{A}_{j}$ is the set of all disk classes with boundaries in $\mathrm{Cone}(\nu_j,\nu_{j+1})$ that can be represented by basic broken lines (bending at walls perpendicular to $D_{\Sigma,j+1}$). Then we have
\[
	W_{\mathfrak{min}} := W^{HV}_{\Sigma} + \sum_{j=1}^{n}\sum_{\pi _\ast (\beta) \in \mathcal{A}_{j}}N_\beta T^{\delta_{\beta}}z^{\partial\beta}
\]
where, as before, $N_\beta$ is the count of disks in class $\beta$ passing through a generic point in $L_u$. Observe that we can have $z^{\partial \beta} = z^{\partial \beta'}$ for different $\beta$ and $\beta'$. This happens when both $\beta$ and $\beta'$ are basic broken disks which project to the same class in $X_\Sigma$ (i.e., $\pi_\ast (\beta) = \pi_\ast (\beta')$), but passing through different points in the blowup center. 
Exponents appearing in $W$ are boundaries of classes in \eqref{eqn:possiblemin}. Note that the expression of the leading terms $W_\mathfrak{min}$ in $W$ remains constant on $R_0$ even when $X$ is non-Fano as mentioned in Remark \ref{rmk:r0nonfano}.

On the other hand, by the duality in Proposition \ref{prop:Duality}, a monomial of $W$ is assigned to each chamber adjacent to a given point $\val(\alpha) \in \mathrm{Trop}(W)$ in such a way that it has a minimal valuation at $\alpha$ among all monomials of $W$. Proposition \ref{prop:pindownclasses} implies that any such monomial should appear in $W_{\mathfrak{min}}$ if $\alpha$ is a geometric critical point. Thus we have:

\begin{cor}\label{cor:Wmin}
For a geometric critical point $\alpha$ of $W$, $\val(\alpha)$ lies on $\mathrm{Trop}(W_{\mathfrak{min}})$. 
\end{cor} 

\begin{remark}\label{rmk:Wmin}
Indeed, the above discussion implies that $\mathrm{Trop}(W_{\mathfrak{min}})$ agrees with $\mathrm{Trop}(W)$ on a compact region containing all the geometric critical points. For this reason, we will not distinguish these two when there is no danger of confusion.
\end{remark}

According to Proposition \ref{prop:pindownclasses}, we can divide geometric critical points of $W$ into two different types depending on whether its associated energy minimizing disks contain (basic) broken disks or not. 
Let us first consider the case where energy minimizers include at least one basic broke disk. Such a geometric critical point will be called \emph{non-toric critical points}. 
We have the following lemma for  non-toric critical points, by further examining the inequalities of \emph{(Case I)} and \emph{(Case II)} in the proof of Proposition \ref{prop:pindownclasses}.

\begin{lemma}\label{lemma:interiorcritpts}
Let $\alpha$ be a geometric critical point of $W$. If one of energy minimizing disks at $\alpha$ maps to a basic broken disk of class $\beta_{\nu_{j}}+k \beta_{\nu_{j+1}}$ under $\pi_\ast$, then $\alpha$ lies arbitrarily close to a corner $D_{\Sigma,j} \cap D_{\Sigma,j+1}$ of the moment polytope $\Delta_\Sigma$.
\end{lemma}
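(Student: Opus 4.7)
The plan is to read off the desired bound directly from the sharpened version of the inequalities already derived in the proof of Proposition~\ref{prop:pindownclasses}. Suppose $\pi_\ast(\beta_i) = \beta_{\nu_j} + k\beta_{\nu_{j+1}}$ for some $k\ge 1$. By applying a suitable unimodular affine coordinate change, I would first reduce to the standardized local model used in that proof, namely assume that
\[
\nu_j=(1,0),\qquad \nu_{j+1}=(0,1),\qquad \lambda_j=\lambda_{j+1}=0,
\]
so that the corner $D_{\Sigma,j}\cap D_{\Sigma,j+1}$ is located at the origin of $\mathbb{R}^2$ and all remaining $\lambda_l$ are strictly positive.

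Next, I would compute the key quantities for the energy-minimizing broken disk. Its boundary class is $\partial\beta_i = \nu_j + k\nu_{j+1} = (1,k)$, and by the formula \eqref{eqn:areabrd} for the area of a broken disk in $X$, after passing to the standardized coordinates we have
\[
\delta(\beta_i) \;=\; \lambda_j + k\lambda_{j+1} - k\epsilon \;=\; -k\epsilon,
\]
where $\epsilon > 0$ is a linear combination of the symplectic sizes of the $k$ exceptional divisors along which the broken line bends (so $\epsilon$ is arbitrarily small in our setup).

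Writing $\val(\alpha)=(x_0,y_0)$, the energy minimization condition \eqref{condi:trop} applied with the two basic disk classes $\beta_{\nu_j}$ and $\beta_{\nu_{j+1}}$ gives
\begin{align*}
x_0 + k y_0 - k\epsilon &\;\leq\; \langle\val(\alpha),\nu_j\rangle + \lambda_j \;=\; x_0,\\
x_0 + k y_0 - k\epsilon &\;\leq\; \langle\val(\alpha),\nu_{j+1}\rangle + \lambda_{j+1} \;=\; y_0,
\end{align*}
while the geometricity of $\alpha$ forces $x_0\ge 0$ and $y_0\ge 0$. The first inequality yields $k y_0 \le k\epsilon$, i.e.\ $y_0\le\epsilon$; substituting this into the second gives $x_0\le y_0+k\epsilon - k y_0\le k\epsilon$. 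Hence $0\le x_0\le k\epsilon$ and $0\le y_0\le \epsilon$, so $\val(\alpha)$ lies within a box of size $O(\epsilon)$ around the origin, i.e.\ around the corner $D_{\Sigma,j}\cap D_{\Sigma,j+1}$.

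Since the $\epsilon$'s can be made arbitrarily small by our assumption on the exceptional divisor sizes, this proves the statement. The only delicate point is to justify the formula $\delta(\beta_i)=-k\epsilon$ in the standardized coordinates from \eqref{eqn:areabrd} and to confirm that the $-k\epsilon$ correction is the \emph{only} negative contribution (so the chain of inequalities really pins $\val(\alpha)$ down to an $\epsilon$-neighborhood of the corner rather than merely to some larger region determined by other $\lambda_l$); this follows because the basic broken disk by definition bends only at walls issuing from blowup points in $D_{\Sigma,j+1}$, and its projection to $X_\Sigma$ lies in $\mathrm{Cone}(\nu_j,\nu_{j+1})$, so no other $\lambda_l$ enters $\delta(\beta_i)$.
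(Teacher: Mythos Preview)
Your proof is correct and follows essentially the same approach as the paper: you reduce to the standard local chart $\nu_j=(1,0)$, $\nu_{j+1}=(0,1)$, $\lambda_j=\lambda_{j+1}=0$ and then apply the two inequalities from \eqref{condi:trop} with $p=1$, $q=k$ together with geometricity $x_0,y_0\ge 0$. The only cosmetic difference is that the paper first sets the exceptional sizes to zero to obtain $x_0=y_0=0$ exactly and then observes the perturbation by $-\epsilon_j$'s, whereas you work directly with the $\epsilon$-correction and extract explicit bounds $0\le y_0\le\epsilon$, $0\le x_0\le k\epsilon$; your formulation of $\delta(\beta_i)=-k\epsilon$ is a harmless abbreviation for the sum $-\sum_s\epsilon_{i_s}$ over the relevant exceptional divisors.
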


\begin{proof}
We first prove that $\val(\alpha)=(x_0,y_0)$ sits exactly at the corner when the sizes of the exceptional divisors are $0$. 
As before, we assume $\partial \beta_i \in \mathrm{Cone} (\nu_{1},\nu_{2})$ and we set $\nu_{1} = (1,0)$, $\nu_{2} = (0,1)$ and $\lambda_{1}=\lambda_{2}=0$ by a suitable affine coordinate change.
If $\beta_i$ represents a basic broken disk $\pi_\ast(\beta_i) = \beta_{\nu_{1}} + k \beta_{\nu_{2}}$, then 
%(i.e. $\xi= (p,q) = (1,k),\, k \geq 1$ and $\lambda_{(1,k)}=0$), 
he inequality \eqref{condi:standard} becomes:
\[
	\underbrace{(1-1)x_{0}}_{=0} + ky_{0} + \underbrace{\delta(\beta_i)}_{=0} \leq 0, \quad 
	x_{0} + \underbrace{(k-1)y_{0}}_{=0} + \underbrace{\delta(\beta_i)}_{=0}\leq 0.
\]
We have $y_{0}=0,\,x_{0}=0$ from the first and second inequality, respectively. When exceptional divisors have positive, but small sizes, then $x_0$ and $y_0$ become small positive numbers, since the left hand sides of the inequalities additionally have $-\epsilon_j$'s (which account for $\omega(\beta_i)- \pi^\ast \omega_\Sigma (\beta_i) < 0$).
\end{proof}

This leads to the following classification of possible energy minimizing disks for a non-toric critical point. In fact, having at least one basic broken disk in the minimizers  significantly restricts types of other energy minimizing disks.

\begin{prop}\label{prop:non-toric-crits}
Let $\alpha$ be a critical point of $W$ such that one of associated energy minimizing disks lies in $\mathcal{A}_j = \Bigl\{\, \beta_{\nu_{j}}+k \beta_{\nu_{j+1}} \, : \, 1 \leq k \leq \left|\mbox{blowup points in} \,\, D_{\Sigma,j+1}\right| \,\Bigr\}$ after the projection $\pi_\ast$. Then $\alpha$ is geometric only if all the other minimizers have classes lying in $\mathcal{A}_j \cup \{ \beta_{\nu_j},\beta_{\nu_{j+1}}\}  $. 
\end{prop}

We will perform a detailed local model calculation for such non-toric critical points later in \ref{subsec:non-toriclocal}.

\begin{proof}

Without loss of generality, let $j=1$, i.e., there exists an energy minimizer at $\alpha$ lying in $\mathcal{A}_{1}$. As in the proof of Theorem \ref{prop:pindownclasses}, we may assume, after coordinate changes, that $\nu_{1}=(1,0)$, $\nu_{2}=(0,1)$, and $\lambda_{1}=\lambda_{2}=0$. By Lemma \ref{lemma:interiorcritpts}, we have $\val(\alpha)=(0,0)$ when the sizes of exceptional divisors limit to zero. 

Now suppose that there exists a class $\beta$ that has a minimal energy at the critical point $\alpha$. Then from \ref{condi:geo} and \ref{condi:trop}, we have:
\[
	 \langle \val(\alpha), \partial \beta \rangle +\delta(\beta)=\,\, \delta(\beta) \leq 0 \,\, = \langle \val(\alpha), (1,0)\rangle + \lambda_{1}
\]
at the limit. This implies $\delta(\beta) = 0$ which is possible only when $\pi_\ast(\beta) \in \mathcal{A}_{1}\cup \bigl\{ (1,0), (0,1)\bigr\}$ precisely by the same argument in the proof of Theorem \ref{prop:pindownclasses}. Therefore, in order for $\alpha$ to be geometric, the $\pi_\ast$-image of every minimizer should lie in $\mathcal{A}_{1} \cup \bigl\{ \beta_{(1,0)}, \beta_{ (0,1)}\bigr\}$.
\end{proof}

We next look into geometric critical points all of whose associated energy minimizing disks are basic disks. They will be referred to as \emph{toric critical points} making obvious distinction from non-toric critical points discussed previously.

In valuation perspectives, these toric critical points can be completely captured from critical points of $W_\Sigma^{HV}$ (Corollary \ref{cor:Wmin}), but there is still a subtlety especially when $X_\Sigma$ is non-Fano. Namely, the number of critical points of $W_\Sigma^{HV}$ calculated by  Kushnirenko's theorem (Theorem \ref{Kushnirenko}) still exceeds the expected number of (geometric) toric critical points. 
Indeed, as long as the resulting surface is semi-Fano, the toric blowup changes the Newton polytope by attaching a unimodal 2-cell whose vertices correspond to the two divisors containing the blowup point and the exceptional divisor. The $2$-cell has volume $1$ since the exceptional divisor has self-intersection $-1$. As a result, we obtain exactly one new non-degenerate critical point of $W_{\Sigma}^{HV}$ after blowup in this case. 

However, if $X_\Sigma$ becomes non-Fano after a blowup, the convexity of the Newton polytope forces a region of bigger volume to be added to the Newton polytope, thereby increasing the number of the critical points of $W_{\Sigma}^{HV}$ (as a function on $(\Lambda^\times)^2$) beyond the rank of $QH^\ast (X_\Sigma)$ (see Figure \ref{fig:nonfano}). We therefore show that no geometric critical points arises from such extra region in the Newton polytope (of a non-Fano surface). More concretely, this region consists of triangles with vertices $\nu_i$, $\nu_j$ and $\nu_k$ satisfying 
\begin{equation}\label{eqn:nonconvexnewton}
	\mathrm{det}\Big\rvert	\begin{matrix}	\nu_{i}- \nu_{j} \\ \nu_{k}- \nu_{j}	\end{matrix}	\Big\lvert >0 ,
\end{equation}
(see Figure \ref{fig:non-geo-crits1}) where $\nu_i$, $\nu_j$ and $\nu_k$ appearing in $\Sigma$ in the counterclockwise order .

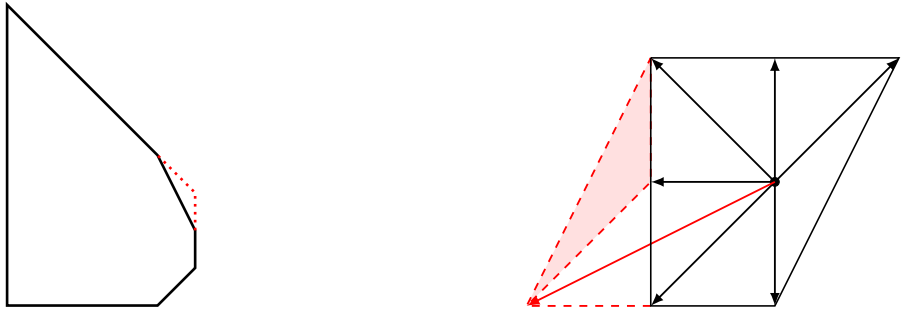
\begin{figure}[h]
\centering
\subcaptionbox{%
	The moment polytope when $X_{\Sigma}$ becomes non-Fano. The blowup is represented in red.
	\label{fig:nonfano1}%
	}
	{\makebox[0.45\linewidth][c]
		{\begin{tikzpicture}[scale=0.5]
				
			\draw[line width=1pt] (0,8)--(0,0)--(4,0)--(5,1)--(5,2)--(4,4)--cycle;
			\draw[line width=1pt, dotted, red] (5,2)--(5,3)--(4,4);
			\node[circle, opacity=0] (a) at (0,-1) {};
		
		\end{tikzpicture}%
	}
	}\hfill%
%=================================================================%
\subcaptionbox{%
	The Newton polytope when $X_{\Sigma}$ becomes non-Fano. The red lattice vector is added after blowup, and the volume of the Newton polytope is increased by 2.
	\label{fig:nonfano2}%
	}
	{\makebox[0.45\linewidth][c]	
		{\begin{tikzpicture}[scale=0.33]
		
				\filldraw (0,0) circle (5pt);
				\draw[line width =0.7pt, -latex] (0,0)--(5,5);
				\draw[line width =0.7pt, -latex] (0,0)--(0,5);
				\draw[line width =0.7pt, -latex] (0,0)--(-5,5);
				\draw[line width =0.7pt, -latex] (0,0)--(0,-5);
				\draw[line width =0.7pt, -latex] (0,0)--(-5,-5);
				\draw[line width =0.7pt, -latex] (0,0)--(-5,0);

				\draw[line width =0.7pt, -latex, red] (0,0)--(-10,-5);
				\fill[red!30, draw=none, opacity=0.4] (-5,5)--(-5,0)--(-10,-5);
				\draw[line width =0.7pt,red, dashed] (-5,5)--(-5,0)--(-10,-5)--cycle;
				\draw[line width =0.7pt,red, dashed] (-5,-5)--(-10,-5);
				\draw[line width=0.6pt] (-5,5)--(0,5)--(5,5)--(0,-5)--(-5,-5)--(-5,5);
				\node[circle, opacity=0] (a) at (0,-6.5) {};
		
		\end{tikzpicture}
		}
	}%
\caption{Excessive region added when $X_{\Sigma}$ becomes non-Fano.}
\label{fig:nonfano}
\end{figure}
%-------------------------------------------------------------------------------------------------------------------------------------------------------%

\begin{remark}
The issue on the excess number of critical points of $W_{\Sigma}^{HV}$ in non-Fano situtation has already observed in \cite{GW} (in arbitrary dimension), where they excluded non-geometric critical points of $W_\Sigma^{HV}$ implicitly through a similar consideration on energies of disks.
We still work with the full potential $W$ while we reduce the argument to $W_\Sigma^{HV}$ by a refined estimate. 
%Though formulated in a different way, our method is along the same line as \cite{GW}.
\end{remark}

\begin{prop}\label{prop:toric-crits}
Let $\alpha$ be a critical point of $W$ such that every energy minimizing disks are basic disks. Then $\alpha$ is geometric only if for any three classes $\beta_{\nu_i}$, $\beta_{\nu_j}$, and $\beta_{\nu_k}$ among minimizers, the inequality
\[
	\mathrm{det}\Big\rvert	\begin{matrix}	\partial \beta_{\nu_i}-\partial\beta_{\nu_j} \\ \partial \beta_{\nu_k}- \partial \beta_{\nu_j}	\end{matrix}	\Big\lvert  = 	\mathrm{det}\Big\rvert	\begin{matrix} {\nu_i}- {\nu_j} \\  {\nu_k}-  {\nu_j}	\end{matrix}	\Big\lvert \leq 0,
\]
holds, whenever
%after relabeling of $\partial \beta_{i}$, $\partial \beta_{j}$, and $\partial \beta_{k}$ if necessary so that 
$\nu_i$, $\nu_{j}$ and $\nu_{k}$ are arranged in $\Sigma$ in the counterclockwise order. 
%$\partial \beta_{j}$ ``lies in between" $\partial \beta_{i}$ and $\partial \beta_{k}$. 
%Such geometric critical points will called \emph{toric critical points}.
\end{prop}

We emphasize that $\nu_{i}$, $\nu_{j}$, and $\nu_{k}$ need not be adjacent.

\begin{proof}
Let $\alpha$ be a geometric critical point of $W$ with $\val (\alpha)=(x_{0},y_{0})$. Choose any energy minimizing basic disks whose classes project to $\beta_{\nu_i}$, $\beta_{\nu_j}$ and $\beta_{\nu_k}$ under $\pi_\ast$, and suppose that their boundaries $\nu_i$, $\nu_j$ and $\nu_k$ appear in counterclockwise order. %We write $\xi_{i}:=\partial \beta_i$, $\xi_{j}:=\partial \beta_j$, $\xi_{k}:=\partial \beta_k$ and $\lambda_i=\delta(\beta_i)$, $\lambda_j=\delta(\beta_j)$, $\lambda_k=\delta(\beta_k)$ for notational simplicity.
%Note that our assumption of the Newton subdivision $\mathcal{S}_{W_{HV}}$ being locally convenient ensures that there are at least three associated lattices. 
After appropriate coordinate changes, one has $\nu_i=(a,b)$, $\nu_{j}=(0,1)$ and $\nu_{k}=(c,d)$ with $a>0$ and $c<0$ (so that they are in counterclockwise order). Note that $a\neq 0$, $c \neq 0$. We may assume that the edge of the moment polytope $\Delta_\Sigma$ dual to $\nu_j$ contains the origin in its interior by a suitable translation of symplectic affine coordinates, that is, $\lambda_j=\delta(\beta_j)=0$ and $\lambda_{l} > 0$ for all $l \neq j$. In particular, $\Delta_\Sigma$ lies in the upper half plane $\{y\geq0\}$. Figure \ref{fig:nongeotoric} shows the setup.

%-------------------------------------------------------------------------------------------------------------------------------------------------------%
\begin{figure}[h]
\centering

\tikzset{cross/.style={cross out, draw=black, minimum size=2*(#1-\pgflinewidth), inner sep=0pt, outer sep=0pt},
cross/.default={3.5pt}}
\tikzset{crossb/.style={cross out, draw=blue, minimum size=2*(#1-\pgflinewidth), inner sep=0pt, outer sep=0pt},
crossb/.default={3.5pt}}

\begin{tikzpicture}[scale=0.6]

\draw[line width=0.6pt, -latex] (-5,0)--(6,0);
\draw[line width=0.6pt, -latex] (0,-2)--(0,7);

\draw[line width=1.2pt] (-2,0)--(2,0);
\draw[line width=1pt, -latex, blue] (0,0)--(0,2);
\draw (0,1) node[right] {\small{$\nu_{j}$}};
\draw (1,0) node[below] {\small{$D_{\Sigma,j}$}};

\filldraw (-3.5,2) circle (1pt);
\filldraw (-3,1.5) circle (1pt);
\filldraw (-2.5,1) circle (1pt);
\draw[line width=1.2pt] (-4,3)--(-5,4.5);
\draw (-5,3.75) node[below] {\small{$D_{\Sigma,i}$}};
\draw[line width=1pt, -latex, blue] (-4.5,3.75)--(-3,4.75);
\draw (-4.5,4.7) node[right] {\small{$\nu_{i}$}};

\filldraw (2.4,1) circle (1pt);
\filldraw (3.1,1.6) circle (1pt);
\filldraw (3.6,2.1) circle (1pt);

\draw[line width=1.2pt] (4,3)--(5,4);
\draw (4.8,3.5) node[below] {\small{$D_{\Sigma,k}$}};
\draw[line width=1pt, -latex, blue] (4.5,3.5)--(3,5);
\draw (4,4.2) node[right] {\small{$\nu_{k}$}};

\end{tikzpicture}

\caption{The moment polytope after coordinate change.} \label{fig:nongeotoric}
\end{figure}
%-------------------------------------------------------------------------------------------------------------------------------------------------------%

Now suppose that on the contrary, $\mathrm{det}\Big\rvert	\begin{matrix}	\nu_{i}-\nu_{j} \\ \nu_{k}-\nu_{j}	\end{matrix}	\Big\lvert = \mathrm{det}\Big\rvert	\begin{matrix}	a & b-1 \\ c & d-1	\end{matrix}	\Big\lvert > 0$. Since we begin with disk classes 
%$\xi_{i}$, $\xi_{j}$, and $\xi_{k}$ represents basic disks 
simultaneously attaining minimum energy, \eqref{condi:trop} reads in our case 
\[
	y_{0}=ax_{0}+by_{0}+ \lambda_i =cx_{0}+dy_{0}+ \lambda_k
\]
Solving this equation, we find
\[
	y_{0}= \frac{c \, \lambda_k - a \, \lambda_i}{a(d-1)-c(b-1)}<0
\]
since $\lambda_i, \lambda_k >0$ due to our choice of symplectic affine coordinates, and hence $(x_0,y_0)$ lies outside the moment polytope. Thus, $\alpha$ cannot be geometric. 
\end{proof}
The part that has to be ruled out by Proposition \ref{prop:toric-crits} is shown in Figure \ref{fig:non-geo-crits1}, whereas Figure \ref{fig:non-geo-crits2} depicts the region ruled out by Proposition \ref{prop:non-toric-crits}. After filtering out all non-geometric critical points, what remains is the star-shaped region obtained by joining heads of $\nu_i$'s. 

It is worthwhile to mention that if we replace the ordinary convex hull (for the Newton polytope) by this star-shaped region, we can extract only the geometric critical points, removing any non-geometric ones. 
In fact, as we will see in the subsequent subsections, the (normalized) volume of this star-shaped region coincides with the expected number of geometric critical points.
In other words, this star-shaped replacement of the Newton polytope precisely excludes the region where the classical Kushnirenko Theorem fails in our geometric context.  From this perspective, our work so far can be seen as a modified  version of the Kushnirenko theorem concerning the number of \emph{geometric} critical points.

%--------------------------------------------------------------------------------------------------------------------------------------------------------%
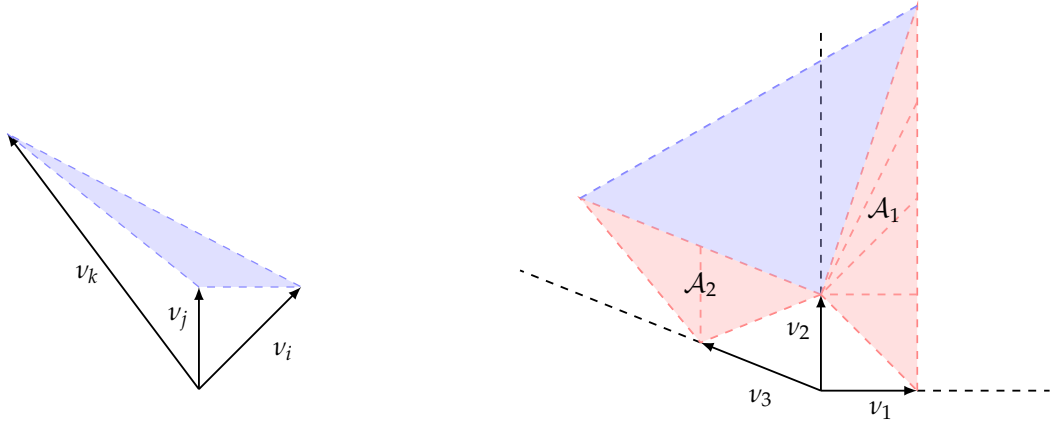
\begin{figure}[h]
\centering
%=================================%
%=================================%
\subcaptionbox{%
	Non-geometric critical point in Proposition \ref{prop:toric-crits}.
	\label{fig:non-geo-crits1}%
	}
	{\makebox[0.45\linewidth][c]
		{\begin{tikzpicture}[scale=0.17]
				
		\draw[line width =0.7pt, -latex] (0,0)--(-15,20);
		\draw (-7,9) node[left] {\small{$\nu_{k}$}};

		\draw[line width =0.7pt, -latex] (0,0)--(0,8);
		\draw (0,5.5) node[left] {\small{$\nu_{j}$}};

		\draw[line width =0.7pt, -latex] (0,0)--(8,8);
		\draw (5,3) node[right] {\small{$\nu_{i}$}};

		\draw[dashed, color=blue!50, line width = 0.5pt] (-15,20) -- (0,8) -- (8,8)--cycle;
		\fill[blue!30, draw=none, opacity =0.4] (-15,20) -- (0,8) -- (8,8);

		\node[circle, opacity=0] (a) at (-20,-2) {};
		\node[circle, opacity=0] (a) at (15,-2) {};

		\end{tikzpicture}%
	}
	}\hfill%
%=================================%
%=================================%
\subcaptionbox{%
	Non-geometric critical point in Proposition \ref{prop:non-toric-crits}.
	\label{fig:non-geo-crits2}%
	}
	{\makebox[0.45\linewidth][c]	
		{\begin{tikzpicture}[scale=0.16]
		
		\draw[line width =0.7pt, -latex] (0,0)--(8,0);
		\draw (5,0) node[below] {\small{$\nu_{1}$}};
		\draw[dashed, line width =0.7pt] (8,0)--(19,0);

		\draw[line width =0.7pt, -latex] (0,0)--(0,8);
		\draw (0,5) node[left] {\small{$\nu_{2}$}};
		\draw[dashed, line width =0.7pt] (0,8)--(0,30);

		\draw[line width =0.7pt, -latex] (0,0)--(-10,4);
		\draw (-5,1) node[below] {\small{$\nu_{3}$}};
		\draw[dashed, line width =0.7pt] (-10,4)--(-25,10);

		\draw[dashed, line width =0.7pt, color=red!50] (8,0)--(8,32)--(0,8);
		\draw[dashed, line width =0.7pt, color=red!50] (8,24)--(0,8)--(8,16);
		\draw[dashed, line width =0.7pt, color=red!50] (8,8)--(0,8)--(8,0);

		\draw[dashed, line width =0.7pt, color=red!50] (0,8)--(-20,16)--(-10,4);
		\draw[dashed, line width =0.7pt, color=red!50] (-10,12)--(-10,4);
		\draw[dashed, line width =0.7pt, color=red!50] (0,8)--(-10,4);

		\fill[red!30, draw=none, opacity =0.4] (8,0)--(8,32)--(0,8);
		\fill[red!30, draw=none, opacity =0.4] (0,8)--(-20,16)--(-10,4);

		\draw (3,15) node[right] {\small{$\mathcal{A}_{1}$}};
		\draw (-10,7) node[above] {\small{$\mathcal{A}_{2}$}};

		\draw[dashed, line width =0.7pt, color=blue!50] (8,32)--(-20,16);
		\fill[blue!30, draw=none, opacity =0.4] (8,32)--(-20,16)--(0,8);

		\node[circle, opacity=0] (a) at (0,-2) {};

		\end{tikzpicture}
		}
	}%
%=================================%
%=================================%

\caption{Classification of geometric critical points.}
\label{fig:classification}
\end{figure}

%--------------------------------------------------------------------------------------------------------------------------------------------------------%

In fact, the following Lemma \ref{lem:unimodal1} asserts that for generic parameters, the Newton subdivision of this star-shaped Newton polytope is always a locally convenient unimodal triangularization (apart from the central cell(s) containing the origin).  Lemma \ref{lem:unimodal1} plays a crucial role in the proof of our main theorem Theorem \ref{main}, where Lemma \ref{lem:unimodal1} guarantees that generically, the potential $W_{\Sigma}$ of the toric model $(X_{\Sigma}, D_{\Sigma})$ is not only Morse, but also has distinct critical values.

\begin{lemma}\label{lem:unimodal1}
For generic parameters, the tropicalization $\mathrm{Trop}(W)$ (restricted to the compact region containing only geometric critical points) is trivalent at every vertex, all of which supports distinct critical points of $W$. In fact, for generic parameters, each vertex is of weight $1$, with the possible exception of the single vertex dual to the convenient $2$-cell $\sigma_{0}$ containing the origin in its interior, or the two vertices dual to the non-convenient $2$-cells adjacent to the edge containing the origin. 
\end{lemma}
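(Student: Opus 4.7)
The plan is to exploit the duality between $\mathrm{Trop}(W)$ and the Newton subdivision $\mathcal{S}_W$ of the star-shaped Newton polytope $\Delta_{W_{\mathfrak{min}}}$ (Proposition \ref{prop:Duality} together with Remark \ref{rmk:Wmin}): trivalency and weight $1$ at a vertex $V$ translate, respectively, to the dual $2$-cell $V^{*}$ being a triangle and having lattice area $\frac{1}{2}$. So I would reformulate the statement as claims about $\mathcal{S}_W$.

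First, I would establish the triangulation property by a standard genericity argument. The subdivision $\mathcal{S}_W$ is determined as the domains of linearity of the maximal piecewise linear function $\psi_W$ with $\psi_W(v)=\val(a_v)$. Since the valuations $\lambda_i$ of basic disk coefficients and the sizes $\epsilon_j$ of the exceptional divisors enter $\val(a_v)$ as independent generic real numbers, for generic choices no four lifted points $(v_i,\psi_W(v_i))$ become coplanar. Hence every maximal cell of $\mathcal{S}_W$ is a triangle, which dually says that $\mathrm{Trop}(W)$ is trivalent at each vertex.

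Second, for unimodality I would split the triangles of $\mathcal{S}_W$ according to whether their vertices come from basic disks ($\nu_i$) or from basic broken disks ($\nu_j+k\nu_{j+1}$). The crucial input in both cases is that, because $X_\Sigma$ is smooth, two adjacent rays satisfy $|\det(\nu_j\,|\,\nu_{j+1})|=1$. In the coordinates with $\nu_j=(1,0)$ and $\nu_{j+1}=(0,1)$, the basic broken disk exponents take the form $(1,k)$, so triangles of the form $\mathrm{Conv}((1,k),(1,k+1),(0,1))$ or $\mathrm{Conv}((1,k),(1,k+1),(1,0))$ all have area $\frac{1}{2}$ by direct $2\times 2$ determinant computation. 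Triangles in $\mathcal{S}_W$ with all three vertices at rays $\nu_i$ must satisfy the geometric determinant inequality of Proposition \ref{prop:toric-crits} in order to lie inside the star-shaped region; combined with smoothness at adjacent corners this rules out non-unimodal configurations away from the origin.

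The exceptional $2$-cells arise precisely because $0\notin \supp W$ while $0$ lies in the interior of $\Delta_W$: the single $2$-cell of $\mathcal{S}_W$ containing $0$ in its interior (the convenient case $\sigma_0$), or the two $2$-cells sharing an edge through $0$ (the non-convenient case), are not subject to the above unimodality analysis and may carry several critical points; for generic $\psi_W$ exactly one of these two configurations occurs. Finally, distinctness of critical points at distinct vertices follows from Proposition \ref{prop:tropcrit}(d): each weight-$1$ vertex supports a unique non-degenerate critical point whose valuation equals the vertex itself, so critical points over different vertices are automatically distinct. The main obstacle is the second step; one has to verify that the extra lattice points coming from multiple basic broken disks in a single sector do not conspire to produce a triangle of lattice area greater than $\frac{1}{2}$, which reduces to the elementary but careful determinantal check afforded by smoothness of $X_\Sigma$.
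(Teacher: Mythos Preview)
Your triangulation step and your treatment of the ``broken-disk'' triangles are fine (apart from the slip that $(1,k),(1,k+1),(1,0)$ are collinear, so that is not a triangle; the relevant cells are $[(0,1),(1,k),(1,k+1)]$ and $[(1,0),(0,1),(1,1)]$, all of lattice area $\tfrac12$). The genuine gap is in the toric case, where all three vertices of a cell $\sigma$ are rays $\nu_a,\nu_b,\nu_c$. The determinant inequality of Proposition~\ref{prop:toric-crits} together with smoothness of \emph{adjacent} cones does not by itself force $\sigma$ to be unimodal: one can easily produce smooth fans with three rays in counterclockwise order satisfying $\det(\nu_a-\nu_b\,;\,\nu_c-\nu_b)\le 0$ whose triangle has lattice area $>\tfrac12$ (it will then contain another ray on an edge or in its interior). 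What actually prevents such a triangle from being a \emph{cell} of $\mathcal S_W$ is that every ray $\nu_i$ must occur as a $0$-cell of the subdivision. That is not a genericity statement about the $\lambda_i$ but a K\"ahler-cone constraint: because the facet $\{\langle x,\nu_i\rangle=-\lambda_i\}$ of $\Delta_\Sigma$ is non-empty, the linear form $\langle\,\cdot\,,\nu_i\rangle+\lambda_i$ is the unique minimizer in an open region near that facet, hence $\nu_i$ is a vertex of the lower hull. (For $\mathbb F_2$, for instance, the condition that $(0,-1)$ appear is $2\lambda_4<\lambda_1+\lambda_3$, which is exactly the K\"ahler inequality, not a generic-position condition.) Once this is in place, your star-shaped observation plus smoothness does finish the argument: the star $\bigcup_i[0,\nu_i,\nu_{i+1}]$ contains only $0$ and the $\nu_i$ as lattice points, so a cell $\sigma$ in the star not touching the origin can have no extra lattice points and is therefore unimodal.

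This is a genuinely different route from the paper's. The paper does not attempt a direct determinantal check; instead it argues by deformation from the blow-up construction (Proposition~\ref{prop:toricblowup}, Theorem~\ref{thm:non-toricblowup}) that any extra lattice point in a cell lies in $\operatorname{supp}W_{\mathfrak{min}}$, and then that such a support point must already be a $0$-cell of $\mathcal S_W$. Your approach is more elementary once patched with the K\"ahler input, and it makes the role of smoothness transparent; the paper's deformation argument is more conceptual but relies on the later blow-up analysis. Finally, your distinctness argument via Proposition~\ref{prop:tropcrit}(d) only handles the weight-$1$ vertices; distinctness of the several critical points sitting over the exceptional cell(s) $\sigma_0$ still requires the direct leading-term computation (cf.\ Remark~\ref{rmk:volmany} and Proposition~\ref{prop:gnc}).
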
 
\noindent Recall that a tropical curve is called \emph{smooth} if all of its vertices are of weight $1$, or equivalently, if its dual Newton subdivision consists of only unimodal cells.
\begin{proof}
Note that by Corollary \ref{cor:Wmin} and Remark \ref{rmk:Wmin}, it suffices to work with $W_\mathfrak{min}$ instead of $W$.
Let $\mathcal{S}_{W_{\mathfrak{min}}}$ be the Newton subdivision of the leading term potential $W_{\mathfrak{min}}$. We denote by $\sigma_{0} \in \mathcal{S}_{W_{\mathfrak{min}}}$ the cell which contains the origin. Note that $\sigma_{0}$ can be either a $2$-cell or a $1$-cell (i.e. an edge containing the origin).

Proposition \ref{prop:non-toric-crits} and \ref{prop:toric-crits} guarantees that a $2$-cell $\sigma \in \mathcal{S}_{W_{\mathfrak{min}}}$ is convenient if $\sigma \neq \sigma_{0}$ when $\sigma_{0}$ is a $2$-cell (or $\sigma \cap \sigma_{0} = \emptyset$ when $\sigma_{0}$ is a $1$-cell). Hence for any such $2$-cell $\sigma$, there exists \textit{at least three} disk classes, say $\beta_{\nu_{i_{1}}}, \dots, \beta_{\nu_{i_{k}}}$, which simultaneously attain minimum valuation at $V=\sigma^{\ast}$. Let $\alpha$ be a geometric critical point supported at this vertex $V$. We have our usual energy minimizing equality
\[
	\langle \val(\alpha), \beta_{\nu_{i_{1}}} \rangle + \lambda_{i_{1}} = \cdots = \langle \val(\alpha), \beta_{\nu_{i_{k}}} \rangle + \lambda_{i_{k}}
	\leq \, \langle \val(\alpha), \nu_i \rangle + \lambda_{i}   
\]
for $i=1,\dots, N$. But since the defining equalities of the moment polytope 
\[
	\langle \val(\alpha), \nu_i \rangle  + \lambda_i \geq 0
\]
are open conditions, $\alpha$ would still be a geometric critical point even after we perturb $\lambda_{i}$'s slightly. Hence for generic coefficients, there are exactly three disks which has minimum energy at any given geometric critical point. In other words, the Newton subdivision $\mathcal{S}_{W_{\mathfrak{min}}}$ is generically a triangulation of $\Delta_{W_{\mathfrak{min}}}$ which is locally convenient, except possibly at the unique edge containing the origin. In terms of the tropicalization, this is equivalent to say that all vertices are trivalent.

We further claim that if the origin is not contained in (the closure of) the cell $\sigma \in \mathcal{S}_{W_{\mathfrak{min}}}$, then $\sigma$ is unimodal. Suppose that a $2$-cell $\sigma$ has $2\mathrm{Vol}(\sigma) >1$, and denote by $\nu_{i}$, $\nu_{j}$, $\nu_{k}$ its three vertices. Note that this implies that $\sigma$ contains at least one extra lattice point, say $\nu_{i_{0}}=(p,q)$, other than $\nu_{i}$, $\nu_{j}$, and $\nu_{k}$. But since any toric surface $X_{\Sigma}$ is obtained via a sequence of blowups from $\mathbb{F}_{k}$, it is clear that $\mathcal{S}_{W_{\mathfrak{min}}}$ can be obtained by continuously deforming an existing unimodal configuration (constructed later in Proposition \ref{prop:toricblowup} and Theorem \ref{thm:non-toricblowup}), during which no lattice points are lost in order for the fan to remain simplicial. 
%(A lattice point vanishes if the moment polytope becomes singular.) 
Hence we may assume $\nu_{i_{0}} \in \support W_{\mathfrak{min}}$, which is to say that there exists a disk class $\beta_{i_{0}} \in H_{2}(X_{\Sigma}, L)$ with $\partial \beta_{i_{0}} =\nu_{i_{0}}$. Let us denote its corresponding term in the potential by $a_{i_{0}}z^{\partial \beta_{i_{0}}}$ where $a_{i_{0}} \in \Lambda$ with $\val(a_{i_{0}}) = \lambda_{i_{0}}$. Note that it suffices to show that $\nu_{i_{0}} \in \mathcal{S}_{W_{\mathfrak{min}}}$ has at least one connected edge.

Suppose that $\nu_{i_{0}}=(p,q) \in \mathcal{S}_{ W_{\mathfrak{min}} }$ has no connected component. In tropical geometry terms, this implies that $\val (a_{i_{0}}z^{\partial \beta_{i_{0}}})$ is never the minimum. Equivalently,
\[
	\langle (x,y), \,\nu_{i_{0}} \rangle + \lambda_{i_{0}} \geq \langle (x,y), \, \nu_{i} \rangle + \lambda_{i} \quad 
	\mbox{for all } (x,y) \in \mathbb{R}^{2}, \, i=1, \dots, N,
\]
where equality holds only if $i=i_{0}$. Notice that this is true only if $\lambda_{i_{0}} = \infty$, that is, $a_{i_{0}} = 0$. Hence $\nu_{i_{0}} = \partial \beta_{i_{0}} \notin \support W_{\mathfrak{min}}$ which is a contradiction.
\end{proof}

\begin{remark}\label{rmk:volmany}
Consider the case where there exists precisely three energy minimizing disks at a vertex $\alpha$ of $\mathrm{Trop}(W)$, and suppose that the corresponding triangle is convenient. It is easy to prove by direct calculation that there are as many \textit{distinct} critical points over $\alpha$ as the volume of the triangle. We will see in Proposition \ref{prop:gnc} that an analogous statement is true for non-convenient cells.
\end{remark}

\subsection{Toric blowups of $(X_{\Sigma}, D_{\Sigma})$}\label{subsec:toricblowup1}
Let us first consider the toric blowup of a given toric surface, and study how the critical behavior of its mirror (Floer) potential changes. Here, the toric blowup refers to a blowup of a toric surface $X_\Sigma$ at one of nodal points, say $D_{\Sigma,j} \cap D_{\Sigma,j+1}$, in the toric divisor $D_\Sigma$. It modifies the fan $\Sigma$ by adding additional primitive generator $\nu'$ between $\nu_j$ and $\nu_{j+1}$. Note that $\nu'$ is uniquely determined by requiring the resulting fan $\widetilde{\Sigma}$ is still simplicial: $\nu'=\nu_j + \nu_{j+1}$.
%
%One needs to restrict the size of blowup so that it does not alter too much the locations of existing critical points of the potential before blowup. See \cite[Example ?]{FOOO} for an example illustrating this phenomenon. 
%This is analogous to our assumption that the sizes of exceptional divisors are small in the previous case of non-toric blowup. In fact, we will see that two cases have many aspects in common. 

When performing (a sequence of) toric blowups, there is a caveat concerning the choice of size of the blowups. Namely, if $X_{\widetilde{\Sigma}}$ is obtained from a toric blowup on $X_{\Sigma}$ and if the blowup size is not ``small enough'',  the Newton subdivision $\mathcal{S}_{ W^{HV}_{\Sigma} }$ may not be preserved. In other words, $\mathcal{S}_{W^{HV}_{\widetilde{\Sigma}}}$ might not contain cells from $\mathcal{S}_{W_{\Sigma}^{HV}}$. This is problematic, since critical points a priori to blowups are no longer extended in a valuation-preserving manner. We therefore require the exceptional divisor to be ``small enough'', given precisely by \eqref{condi:blowup-size} during the proof of Proposition \ref{prop:toricblowup} .

\begin{remark}\label{rmk:Morse}
If $X_{\Sigma}$ is obtained from a sequence of blowups on a Hirzebruch surface $\mathbb{F}_{k}$, it would be convenient to have all blowup sizes sufficiently small. This ensures that the polyhedral decomposition $\mathcal{S}_{W_{\widetilde{\Sigma}}^{HV}}$ not only contains the cells in $\mathcal{S}_{W_{\Sigma}^{HV}}$ but also includes all cells that appear during the inductive sequence of blowups starting from $\mathcal{S}_{W_{\mathbb{F}_k}^{HV}}$. However, we will make no such assumptions, and only assume that the potential $W_{\Sigma}^{HV}$ is Morse. In fact, Lemma \ref{lem:unimodal1} and Proposition \ref{prop:gnc} tell us that $W_{\mathfrak{min}}$ (and hence $W_{\Sigma}^{HV}$) is indeed Morse for generic parameters, and that critical points of $W_{\mathfrak{min}}$ extend to those of its blowup even if they are not locally convenient.
\end{remark}

\begin{prop}\label{prop:toricblowup}
Let $(X_{\Sigma}, D_\Sigma)$ be a smooth toric surface and let  $X_{\widetilde{\Sigma}}$ be a toric surface obtained by blowing up $X_\Sigma$ at one of the nodal points $D_{\Sigma,j} \cap D_{\Sigma,j+1}$ in $D_\Sigma$. Then there exists $r>0$ depending on $\omega$ such that if the size of the exceptional divisor is less than $r$, then $ W_{\widetilde{\Sigma}} $ has precisely $1$ new geometric critical point which is non-degenerate. Moreover, if  $ W_{\Sigma} $ is Morse, then $ W_{\widetilde{\Sigma}} $ is also Morse.
%$X_{\widetilde{\Sigma}}$ also satisfies Assumption \ref{assume:locconv} in this case.
%Then $W^{HV}_{\widetilde{\Sigma}}$ has precisely $k$ many new geometric critical points.
\end{prop}

Here, `new critical point' means that apart from this, there is one-to-one correspondence between geometric critical points of $W_\Sigma$ and $W_{\widetilde{\Sigma}}$. We will see that the correspondence matches the lowest energy terms of the potential at the corresponding pair of critical points of $W_\Sigma$ and $W_{\widetilde{\Sigma}}$.
%{\color{red}
%WRITE ABOUT TORIC BLOWUP PRESERVES LOCAL CONVENIENCE:
%(Is it always the case if the blowup size is small?)
%With this, one can inductive apply the proposition to derive more general statement about toric blowup: formulate it as a corollary...
%}

\begin{proof}

%Suppose that we are given a toric surface $(X_{\Sigma}, D)$ whose Newton subdivision of the Hori-Vafa part $W^{HV}_{\Sigma}$ is locally convenient. 
As before, it suffices to work with $W_{\mathfrak{min}}$ (which in this case is just the Hori-Vafa part) instead of $W_{\widetilde{\Sigma}}$. Without loss of generality, let $X_{\widetilde{\Sigma}}$ be the blowup of $X_\Sigma$ at the corner $D_{\Sigma,1} \cap D_{\Sigma,2}$ and assume $\nu_{1}=(1,0)$, $\nu_{2}=(0,1)$ and $\lambda_{1}=\lambda_{2}=0$. Then the new primitive generator of $\widetilde{\Sigma}$ has to be $\nu'=\nu_{1}+\nu_{2}=(1,1)$. Moreover,
%We keep the choice of symplectic affine coordinates after the blowup. 
the Hori-Vafa potential for $X_{\tilde{\Sigma}}$ with this choice of cooridnates is by definition given as
\begin{align}
W^{HV}_{\widetilde{\Sigma}}  &= W^{HV}_{\Sigma} +  T^{-\eta}z_{1}z_{2}  \nonumber \\
 &= z_{1} + z_{2} + T^{-\eta}z_{1}z_{2} + \sum_{i=3}^{N}T^{\lambda_{i}}z^{\partial\beta_{\nu_{i}}} \label{eqn:z1z2}
\end{align}
for some $\eta>0$, where $z_{1}$ and $z_{2}$ are the counts of $\beta_{\nu_{1}}$ and $\beta_{\nu_{2}}$ basic disks.

%Note that the first three terms in \eqref{eqn:z1z2} agrees with the local model $\overline{W}_{1}$ when $l=1$ in the non-toric blowup case in \ref{subsec:non-toriclocal}. Hence one can apply the same argument in the proof of Theorem \ref{thm:non-toricblowup}, 
The new critical point can be obtained by solving the critical point equation for the first three terms in \eqref{eqn:z1z2} and applying Lemma \ref{Woodward}, once there is a guarantee that the last summand $\sum_{i=3}^{N}T^{\lambda_{i}}$ in \eqref{eqn:z1z2} has higher energy than these three. 
For this reason we restrict the size of the blowup so that every toric critical point $\alpha_{j}$ with $\val (\alpha_{j})=(x_{j},y_{j})$ lies in either of the shaded regions in Figure \ref{fig:blowupsize}. Specifically, we require that the size of the toric blowup $\eta$ satisfies 
\begin{equation}\label{condi:blowup-size}
0 < \eta < r:=\min_j\max\{x_{j}, y_{j}\}.
\end{equation}
This ensures that $\beta_{\nu'}$ does not become an energy minimizing disk of any  existing critical points of $W_\Sigma$. Indeed, for any toric critical point $\alpha_{j}$ with an energy minimizer $\beta_{\nu_k}$ ($1\leq k\leq N)$,
\begin{equation}\label{eqn:xjyj}
	\big\langle (x_{j},y_{j}), \nu_k \big\rangle + \lambda_k \,
	\leq \, \min\{ x_{j}, y_{j}\} < x_{j} + y_{j} - \eta, \,\,\, \text{ for all }\,\, j
\end{equation}
where the first inequality says that the area of $\beta_{\nu_k}$ does not exceed those of $\beta_{\nu_{1}}$ and $\beta_{\nu_{2}}$. \\

Now suppose the blowup size $\eta$ is less than $r$. \\

\noindent\emph{(i)} We first show that every critical points of $W_\Sigma$ extend to critical points of $W_{\widetilde{\Sigma}}$ in valuation-preserving manner.
\begin{itemize}
\item[(a)] If $W_\Sigma^{HV}$ satisfies local convenience (Definition \ref{def:locconv}), then each critical point of $W_\Sigma^{HV}$ has its corresponding critical point of $W_{\widetilde{\Sigma}}^{HV}$ which shares the same lowest energy term. This is because every vertices of $\mathrm{Trop}(W_\Sigma^{HV})$ are also vertices of $\mathrm{Trop}(W_{\widetilde{\Sigma}}^{HV})$ from the above discussion. On the other hand, higher energy terms in the full potential do not affect the valuations of geometric critical points by Proposition \ref{prop:pindownclasses}, as before.
\item[(b)]
If $W_\Sigma^{HV}$ is not locally convenient, we will prove separately in \ref{prop:gnc} that $W_\Sigma^{HV}$ fails to be locally convenient over a unique edge $e$ \eqref{eqn:locconvfail} of its tropicalization, and that the edge supports four critical points of $W_\Sigma$ each of which extends to that of $W_{\widetilde{\Sigma}}$ in a valuation-preserving manner. For the other critical points, we still have local convenience, and hence can proceed as in (a).
\end{itemize}
We conclude that evert critical points $\alpha_{j}$ of $W_{\Sigma}$ are extended to those of $W_{\widetilde{\Sigma}}$ keeping the same lowest energy terms.\\

% and $W^{HV}_{\widetilde{\Sigma}}$ has a new geometric critical point $\alpha$ with $\val (\alpha)=(\eta, \eta)$. 

\noindent\emph{(ii)} We next seek for a new critical point. Observe that $\beta_{\nu_{1}}$, $\beta_{\nu_{2}}$, and $\beta_{\nu'}=\beta_{\nu_{1}}+\beta_{\nu_{2}}$ are the only energy minimizing disk classes at the critical point $\alpha_0=(-T^\eta,-T^\eta)$ of $z_{1}+z_{2} + T^{-\eta} z_{1} z_{2}$ (with their energies being $\eta$).
This is because if the equality 
\[
	\eta=\underbrace{\big\langle \val(\alpha_0), \, \beta_{\nu_k} \big\rangle  +  \lambda_{k}}_{\omega_{\widetilde{\Sigma}}(\beta_{\nu_k}) }
\]
holds for some $k\neq1,2$, then $\beta_{\nu'}$ would be a minimizer at a critical point of $W_\Sigma^{HV}$ determined by $\beta_{\nu_{1}}$, $\beta_{\nu_{2}}$ and $\beta_{\nu_k}$, contradicting \eqref{eqn:xjyj}.

Note that $\nu_{1}$, $\nu_{2}$, and $\nu_{1}+\nu_{2}$ forms a convenient unimodal 2-cell, implying that $\alpha_0$ is indeed non-degenerate. (In particular, the Hori-Vafa potential $W^{HV}_{\widetilde{\Sigma}}$ is locally convenient at the new vertex $\val(\alpha)$.)  By Lemma \ref{Woodward}, $\alpha$ uniquely extends to a critical point $\alpha$ of $W_{\widetilde{\Sigma}}$.

%-------------------------------------------------------------------------------------------------------------------------------------------------------%
\begin{figure}[h]
\centering

\tikzset{cross/.style={cross out, draw=black, minimum size=2*(#1-\pgflinewidth), inner sep=0pt, outer sep=0pt},
cross/.default={3.5pt}}
\tikzset{crossb/.style={cross out, draw=blue, minimum size=2*(#1-\pgflinewidth), inner sep=0pt, outer sep=0pt},
crossb/.default={3.5pt}}

\begin{tikzpicture}[scale=0.4]

\draw[fill=blue!30, opacity=0.4, draw=none] (3,0)--(3,3)--(10,10)--(10,0);
\draw[fill=red!30, opacity=0.4, draw=none] (0,3)--(3,3)--(10,10)--(0,10);

\draw[line width =1pt] (0,10)--(0,3);
\draw[line width =1pt] (3,0)--(0,3);
\draw[line width =1pt] (3,0)--(10,0);

\draw[line width =0.5pt, color=blue] (-2,3)--(3,3);
\draw[line width =0.5pt, color=blue] (3,-2)--(3,3);
\draw[line width =0.5pt, color=blue] (3,3)--(10,10);

\draw (11,10) node[above] {$x=y$};
\draw (-2,3) node[left] {$y=\eta$};
\draw (3,-2) node[below] {$x=\eta$};

\draw (8,2) node[above] {$x < x + y - \eta $};
\draw (3.5,7) node[above] {$y < x + y - \eta $};

\end{tikzpicture}
\caption{Restriction of sizes of toric blowups} \label{fig:blowupsize}
\end{figure}
%-------------------------------------------------------------------------------------------------------------------------------------------------------%

%Unlike the non-toric case where Proposition \ref{prop:non-toric-crits} guarantees that there exists no other geometric critical point with associated lattice $\xi \notin \mathrm{Cone}(\nu_{1}, \nu_{2})$, for toric cases, Proposition \ref{prop:toric-crits} alone is not enough. That is, we need to further 

It remains to prove that there exist no other geometric critical points of $W^{HV}_{\widetilde{\Sigma}}$ having $\beta_{\nu'}$ as an energy minimizing disk. Suppose that such a critical point exists, say $\alpha'$ with $\val (\alpha')=(x_{0},y_{0})$, assuming $x_{0}\leq y_{0}$ without loss of generality. Let $ \beta_{\nu_k}$  be another energy minimizing disk class at $\alpha'$. We have $\partial \beta_{\nu_k} = \nu_k = (p,q) \notin \mathrm{Cone}(\nu_{1}, \nu_{2})$.
From \eqref{condi:trop}, we have
\begin{equation}\label{eqn:px0qy0}
	\underbrace{x_{0} + y_{0} - \eta}_{\omega_{\widetilde{\Sigma}}(\beta_{\nu'})}
	=\underbrace{px_{0} + qy_{0} + \lambda_k}_{\omega_{\widetilde{\Sigma}}(\beta_{\nu_k})}
	\leq \underbrace{x_{0}}_{\omega_{\widetilde{\Sigma}} (\beta_{\nu_{1}})}, \,  \underbrace{y_{0}}_{\omega_{\widetilde{\Sigma}} (\beta_{\nu_{2}})}.
\end{equation}
The first part of the inequality implies $x_{0}, \, y_{0} \leq \eta$, while the second part gives us 
\[
	0 
	\geq (p-1)x_{0}+qy_{0}+\lambda_k
	\geq  (p+q-1)x_{0} + \lambda_k.
\]
Note that $\lambda_k > 0$ since $\nu_k \notin \mathrm{Cone}(\nu_{1}, \nu_{2})$. Hence we have $p+q-1 < 0$. 

On the other hand, since $\beta_{\nu_k}$ is not energy minimizing at the critical point $\alpha$ by our hypothesis,
\begin{align*}
\langle \val(\alpha), \nu_{1}+\nu_{2} \rangle -\eta   
	< \langle \val(\alpha), (p,q) \rangle + \lambda_k.
\end{align*}
Plugging in $\val(\alpha)=(\eta,\eta)$, we get
\[
	(1-p-q)\eta <\lambda_k.
\]
Thus, 
\begin{align*}
\omega_{\widetilde{\Sigma}}(\beta_{\nu_k}) = px_{0} + qy_{0} + \lambda_k &> px_{0} + qy_{0} +(1-p-q)\eta \\
 &= p(x_{0}-\eta) + q(y_{0}-\eta) + \eta  \\
 &\geq (p+q)(x_{0}-\eta) + \eta  &\because x_{0} \leq y_{0}\\
 &=(p+q-1)(x_{0}-\eta)+x_{0}  &\because p+q-1 < 0, \, x_{0} \leq \eta \\
 &\geq x_{0},
\end{align*}
contradicting \eqref{eqn:px0qy0}. Hence no such $\alpha'$ can exist, and we conclude that $\alpha$ is the only new geometric critical point. \end{proof}

By inductively blowing up corners while imposing similar restrictions as in condition \eqref{condi:blowup-size}, we have:

\begin{cor}[\emph{Theorem II}]\label{cor:toricnum}
Let  $X_{\widetilde{\Sigma}}$ be the toric surface obtained by taking a sequence of toric blowup of another toric surface $X_\Sigma$. There exists $r>0$ depending on $\omega$ such that if the sizes of exceptional divisors are smaller than $r$, then $W_{\widetilde{\Sigma}}$ has as many new non-degenerate critical points as the number of exceptional divisors for generic parameters. Moreover, every critical point of $W_{\Sigma}$ are extended to that of $W_{\widetilde{\Sigma}}$ in a valuation-preserving manner. If $W_{\Sigma}$ is Morse, then $W_{\widetilde{\Sigma}}$ is also Morse. 
\end{cor}

%-------------------------------------------------------------------------------------------------------------------------------------------------------%
%-------------------------------------------------------------------------------------------------------------------------------------------------------%

Finally, we demonstrate how critical points of $W_{\Sigma}$ extend to those of its blowup when $W_{\Sigma}$ is not local convenient. (Note that we do not distinguish toric/non-toric blowups here.) If $W_{\Sigma}$ fails to be locally convenient, then it does so over (the closure of) the edge $e$ dual to a unique edge in the Newton subdivision that passes through the origin. Under basis change, we may assume that $e$ in the Newton subdivision joins $(0,1)$ and $(0,-1)$. Lemma \ref{lem:unimodal1} tells us that generically, $e$ is contained in two triangles in the Newton subdivision. Let us write $(i,j)$ and $(k,l)$ for the vertices of the triangles other than $(0,\pm1)$. See Figure \ref{fig:gnc}.

%--------------------------------------------------------------------------------------------------------------------------------------------------------%
\begin{figure}[h]
\centering
%=================================%
%=================================%
\subcaptionbox{%
	Generic configuration of when the Newton subdivision is not locally convenient.
	\label{fig:gnc1}%
	}
	{\makebox[0.45\linewidth][c]
		{\begin{tikzpicture}[scale=0.5]
				
		\draw[line width=0.7pt, -latex] (0,-4.5)--(0,4.5);
		\draw[line width=0.7pt, -latex] (-4.5,0)--(4.5,0);
		
		\draw[line width=1pt] (0,3)--(-2,-2)--(0,-3)--(2,1)--cycle;
		
		\draw[line width=1pt, -latex] (0,0)--(0,3);				
		\draw[line width=1pt, -latex, color=red] (0,0)--(-2,-2);
		\draw[line width=1pt, -latex] (0,0)--(0,-3);
		\draw[line width=1pt, -latex, color=red] (0,0)--(2,1);

		\draw (0,3) node[left] {\small{$(0,1)$}};
		\draw (0,-3.2) node[right] {\small{$(0,-1)$}};
		\draw (-2,-2) node[left] {\small{$(k,l)$}};
		\draw (2,1) node[right] {\small{$(i,j)$}};						
										
		\end{tikzpicture}%
		}%
	}\hfill%
%=================================%
%=================================%
\subcaptionbox{%
	The moment polytope when the Newton subdivision is not locally convenient.
	\label{fig:gnc2}%
	}
	{\makebox[0.45\linewidth][c]	
		{\begin{tikzpicture}[scale=0.55]
		
		\draw[line width=0.9pt, color=red] (-3,3)--(-2,0.7);			
		\draw[line width=1pt, dotted] (-1.6,0.4)--(-0.3,0.1);			
		\draw[line width=0.9pt] (0,0)--(6,0);
		\draw[line width=1pt, dotted] (7.2,0.3)--(7.8,1.6);
		\draw[line width=0.9pt, color=red] (8,2)--(7,4);
		\draw[line width=1pt, dotted] (6.7,4.2)--(5.4,4.9);		
		\draw[line width=0.9pt] (5,5)--(-1,5);					
		\draw[line width=1pt, dotted] (-1.9,4.6)--(-2.8,3.4);

		\draw[line width=1.3pt] (0,2.3)--(5,2.3);
		\draw (2.2,2.3) node[above] {\small{$e$}};		
		
		\filldraw (3,0) circle (2pt);
		\draw (3,0) node[below] {\small{$(0,0)$}};			
		
		\end{tikzpicture}
		}%
	}%
%=================================%
%=================================%
\caption{When the Newton subdivision is not locally convenient.}
\label{fig:gnc}
\end{figure}
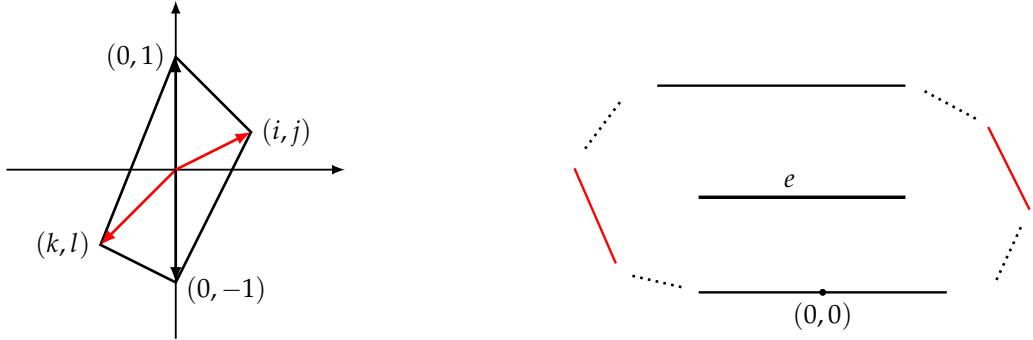%--------------------------------------------------------------------------------------------------------------------------------------------------------%

In accordance with energies of disks measured at a point in $e$, $W$ is given as
\[
	 W = z_{2} + \frac{T^b}{z_{2}} + T^{\lambda} z_{1}^i z_{2}^j + T^{\eta} z_{1}^k z_{2}^l + \text{h.o.t.},
\]
where the higher order terms contain contributions from disks with sphere bubbles, or from broken disks intersecting exceptional divisors. Without loss of generality, we may assume $i>0$, $k<0$ and $\lambda,\eta\geq0$. Let $\alpha$ be a geometric critical point of $W$ lying over $e$ with $\val (\alpha)$ on $e$. Clearly, $e$ is a part of $\{\val(z_{2})=b/2\}$, so we may set $z_{2} = T^{b/2} \underline{z_{2}}$ with $\underline{z_{2}}\in \Lambda_U$ to obtain
\begin{equation}\label{eqn:gncunderliney}
 W = T^{b/2} \left(\underline{z_{2}} + \frac{1}{\underline{z_{2}}} \right) + T^{\lambda + \frac{jb}{2}} z_{1}^i \underline{z_{2}}^j + T^{\eta + \frac{lb}{2}} z_{1}^k \underline{z_{2}}^l + \text{h.o.t.} \,.
\end{equation}
By (a) of Lemma \ref{Woodward}, we conclude that we must have $\underline{z_{2}} = \pm 1 + z_{2}^{+}$ for some nonzero $z_{2}^{+} \in \Lambda_+$ (since $\underline{z_{2}}=\pm1$ is obviously not a critical point).

Note that direct calculation shows that the $x$-coordinates of the end points of the edge $e$ are
\begin{equation}\label{eqn:gncrangee}
\frac{1}{i} \left( \frac{b}{2} - \lambda - \frac{jb}{2} \right) < \frac{1}{k} \left( \frac{b}{2} - \eta - \frac{lb}{2} \right)
\end{equation}
from left to right, and within this rage, the first two terms in \eqref{eqn:gncunderliney} are of minimal energy. 
Notice that the above inequality is a necessary condition for the aforementioned two triangles to be a part of the Newton subdivision.
Thus we have an expansion of $W$ in $z_{2}^{+}$
\begin{equation}\label{eqn:gncunderliney2}
W = T^{b/2} \left(	{(z_{2}^{+})}^{2} -   {(z_{2}^{+})}^{3} + \cdots \right) 
+ T^{\lambda + \frac{jb}{2}} z_{1}^i  (1+z_{2}^{+})^j + T^{\eta + \frac{lb}{2}} z_{1}^k (1+z_{2}^{+})^l + \text{h.o.t.}
\end{equation}
modulo constant.
Note that the higher order terms of \eqref{eqn:gncunderliney} are still of higher order in \eqref{eqn:gncunderliney2}, since the leading order term of $T^{\lambda^{'}}z_{1}^{p}(1+z_{2}^{+})^{q}$ is still $T^{\lambda^{'}}z_{1}^{p}$ due to the fact that $z_{2}^{+} \in \Lambda_+$.

In the expansion \eqref{eqn:gncunderliney2}, the only possible minimal energy (nonconstant) terms over $e$ are  $T^{b/2} {(z_{2}^{+})}^{2}$, $T^{\lambda + \frac{jb}{2}} z_{1}^i$, and $T^{\eta + \frac{lb}{2}} z_{1}^k$. Now if \eqref{eqn:gncunderliney2} admits a critical point, then again it must solve the critical equation of the lowest order part, and in this case, since $z_{2}^{+} \in \Lambda_+$, we should have the last two terms 
\begin{equation}\label{gncsolvex}
T^{\lambda + \frac{jb}{2}} z_{1}^i + T^{\eta + \frac{lb}{2}} z_{2}^k
\end{equation}
sitting in the lowest degree in order for the potential to admit critical points over $e$. We will see shortly that this is indeed the case.

Critical points of \eqref{gncsolvex} occur when $\val(z_{1}) = \frac{2(\eta-\lambda) + (l-j)b}{2(i-k)}$, so we may set
\[
	z_{1} =  T^{\frac{2(\eta-\lambda) + (l-j)b}{2(i-k)}} \underline{z_{1}}.
\]
Expanding $\underline{z_{1}}$ at the critical points of \eqref{gncsolvex} leads to
\[
	\underline{z_{1}} = \rho + z_{1}^{+}, \quad z_{1}^{+} \in \Lambda_+
\]
where $\rho$ is one of $(i-k)$-th roots of $-\frac{k}{i}$. Note that there are $(i-k)$-many choices for $\rho$. Hence we can conclude
\[
	\val(\alpha)= \left(     \frac{2(\eta-\lambda) + (l-j)b}{2(i-k)} , \frac{b}{2}  \right)
\]
if $\alpha$ does exist.\\

Substituting $\underline{z_{1}}= T^{\frac{2(\eta-\lambda) + (l-j)b}{2(i-k)}} (\rho + z_{1}^{+})$, the (leading terms of) second order expansion (over $e$) reads 
\[
	W = T^{b/2} \left( {(z_{2}^{+})}^{2} - {(z_{2}^{+})}^{3} + \cdots \right) + 
	T^\delta (\rho + z_{1}^{+})^i (1+ z_{2}^{+})^j + T^\delta (\rho + z_{1}^{+})^k (1+z_{2}^{+})^l
\]
modulo constants, where 
\[
	\delta= \lambda + \frac{jb}{2} + \frac{i(2 (\eta-\lambda) + (l-j) b)}{2(i-k)}=\eta + \frac{lb}{2} + \frac{k(2 (\eta-\lambda) + (l-j) b)}{2(i-k)}.
\]
(In a more symmetric form, this equals $ \frac{1}{2(i-k)} ( -2k \lambda + 2i \eta + (il -kj)b)$.) Notice that $\rho$ obtained from the critical point equation should make two linear terms in $z_{1}^{+}$ cancel. Hence the possible minimal energy terms are
\[
	T^{b/2} {(z_{2}^{+})}^{2}, \,\, T^\delta {(z_{1}^{+})}^{2}, \,\, T^\delta z_{2}^{+},
\]
followed by $T^\delta z_{1}^{+}z_{2}^{+}$. Taking partial derivatives, we have 
\begin{align*}
\partial_1 W 	&= iT^{\delta}(\rho + z_{1}^{+})^{i-1}(1+z_{2}^{+})^{j} + kT^{\delta}(\rho + z_{1}^{+})^{k-1}(1+z_{2}^{+})^{l}\\
				&= C_{1} + \left(a_{1}T^{\delta} +\text{h.o.t.} \right) z_{1}^{+} 
				+ \left(a_{2}T^{\delta} + \text{h.o.t.} \right)z_{2}^{+} + \sum \lambda_{pq} {(z_{1}^{+})}^{p} {(z_{2}^{+})}^{q},\\
\partial_2 W	&= \left(2T^{\frac{b}{2}}+\text{h.o.t.} \right) z_{2}^{+} + jT^{\delta}(\rho + z_{1}^{+})^{i}(1+z_{2}^{+})^{j-1} 
				+ lT^{\delta}(\rho + z_{1}^{+})^{k}(1+z_{2}^{+})^{l-1}\\
				&= C_{2} + \left(b_{1}T^{\delta} + \text{h.o.t.} \right) z_{1}^{+} + \left(b_{2}T^{\frac{b}{2}} + \text{h.o.t.} \right) z_{2}^{+}
				+ \sum \eta_{pq} {(z_{1}^{+})}^{p} {(z_{2}^{+})}^{q}
\end{align*}
with $\val (C_{1}) > \delta$ and $\val (C_{2}) \geq \delta$. Note that the two summations above are higher order terms, in the sense that $\val (\lambda_{pq}) \geq \delta$, $\val (\eta_{pq}) \geq \frac{b}{2}$, and  $\val (\eta_{pq}) \geq \delta$ for $p \geq 1$.

Crucial observation here is that $b/2 < \delta$. This is in fact equivalent to \eqref{eqn:gncrangee} that was originated from our geometric assumption (on the Newton subdivision). Therefore by applying Lemma \ref{lemma:appendix}, we see that $(\partial_1 W - C_{1}, \partial_2 W - C_{2}) = (0,0)$ obtained by substituting
\begin{equation}\label{nonconvcrit}
	(z_{1}, z_{2}) = \left( T^{\frac{2(\eta-\lambda) + (l-j)b}{2(i-k)}} (\rho + z_{1}^{+}), T^{b/2} (\pm 1 + z_{2}^{+}) \right)
\end{equation}
has a unique solution, as the left hand side is a small perturbation of the identity map. Moreover, the leading term calculation tells us that $\val(z_{2}^{+}) = \delta - \frac{b}{2}$. 
In this case, $T^{b/2} {(z_{2}^{+})}^{2}$ can be checked to have a higher valuation than the other two among
\[
	T^{b/2} {(z_{2}^{+})}^{2},\,\, T^{\lambda + \frac{jb}{2}} z_{1}^i,\,\, T^{\eta + \frac{lb}{2}} z_{1}^k
\]
where $\val(z_{1}) = \frac{2(\eta-\lambda) + (l-j)b}{2(i-k)}$, which justifies our earlier assumption. Consequently, we obtain $2(i-k)$ distinct critical points all supported at $\big( \frac{2(\eta-\lambda) + (l-j)b}{2(i-k)} , \frac{b}{2} \big)$, which in fact coincides with the normalized volume of the Newton polytope. We have thus proven:

\begin{prop}\label{prop:gnc}
Let $(X,D)$ be obtained from $(X_{\Sigma},D_{\Sigma})$ via a sequence of toric/non-toric blowups, and suppose that its Newton subdivision contains a non-convenient edge whose adjoint $2$-cells, say $\sigma_{1}$, $\sigma_{2}$, consists of Hori-Vafa terms, as in Figure \ref{fig:gnc}. Then $W_{\Sigma}$ has $2(\mathrm{V}(\sigma_{1})+ \mathrm{V}(\sigma_{2}))$-many non-degenerate geometric critical points on the dual edge $e \in \mathrm{Trop}( W_{\Sigma} )$, all of which extends to $W$.
\end{prop}

\begin{example}

Notice that the Hirzebruch surface $\mathbb{F}_{k}$ is the case when $(i,j) = (1,0)$ and $(k,l) = (-1,-k)$, and it follows that $\mathbb{F}_{k}$ has $2(1-(-1))=4$ non-degenerate geometric critical points. To give a more concrete description of the location of these critical points, let us consider the following scenario where the moment polytope of $\mathbb{F}_{k}$ is given by 
\[
	 x \geq 0, \quad y \geq 0, \quad b-y \geq 0, \quad a-x-ky \geq 0.
\]
The edge $e$ where local convenience fails is given by 
\begin{equation}\label{eqn:locconvfail}
e:=\bigl\{ (x,y) \in \mathbb{R}^2 : b/2 \leq x \leq a - (b(k+1)/2), \quad y= b/2\bigr\}.
\end{equation}
 (When $k=0$, we assume $a >b$ without loss of generality.) From Proposition \ref{prop:gnc}, it follows that $\mathbb{F}_{k}$ has 4 non-degenerate geometric critical points for $k \geq 1$, all of which are of valuation $(\frac{1}{2}a-\frac{k}{4}b, \frac{1}{2}b) \in e$.

\begin{figure}[h]
\centering
\begin{tikzpicture}[scale=0.65]

		\draw[line width =1pt] (0,0)--(0,5)--(5,5)--(20,0)--cycle;
		\draw[line width =0.7pt, dashed] (0,5)--(0,20/3)--(5,5);
		\draw[line width =0.7pt, dashed] (0,2.5)--(2.5,2.5)--(2.5,0);
		\draw[line width =0.7pt, dashed] (7.5,2.5)--(7.5,0);
		
		\draw (20,0) node[below] {$a$};
		\draw (0,4.8) node[left] {$b$};
		\draw (0,2.3) node[left] {$b/2$};
		\draw (2.3,0) node[below] {$b/2$};
		\draw (7.6,0) node[below] {$a - (b(k+1)/2)$};
		\draw (0,7.2) node[left] {$a/k$};

		\draw[line width =0.7pt, color=blue] (0,5)--(2.5,2.5)--(0,0);
		\draw[line width =0.7pt, color=blue] (5,5)--(7.5,2.5);
		\draw[line width =0.7pt, color=red] (7.5,2.5)--(2.5,2.5);
		\draw[line width =0.7pt, color=blue] (7.5,2.5)--(20,0);
		
		\draw (3.3,2.5) node[above] {$e$};
		\filldraw (5,2.5) circle (2.2pt);
		\draw (5.2,2.5) node[below] {$p$};

\end{tikzpicture}
\caption{The moment polytope $\Delta_{\mathbb{F}_k}$ and the tropicalization $\mathrm{Trop}(W_{\mathbb{F}_k}^{HV})$. The edge $e$, dual to the edge of $\mathcal{S}_{W_{\mathbb{F}_k}^{HV}}$ where local convenience fails, is represented in red. The point $p$ is where all geometric critical points lie.} \label{fig:momentpolytopeofFk}
\end{figure}
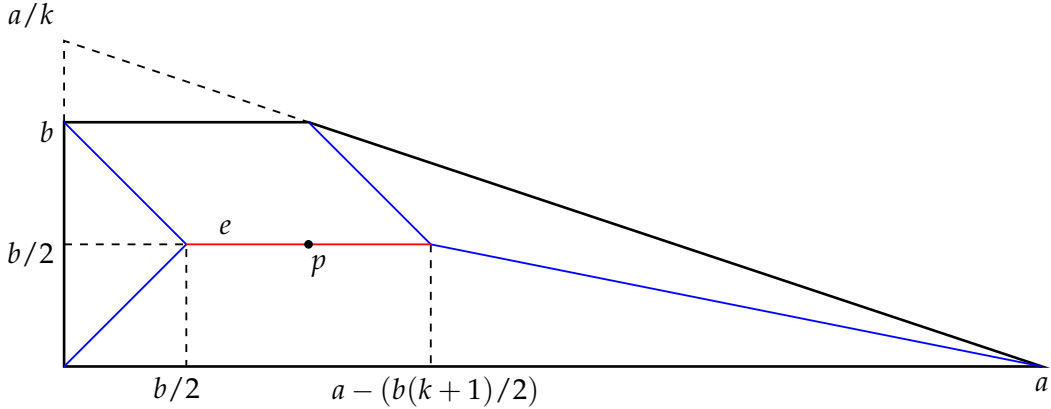
\end{example}

%-------------------------------------------------------------------------------------------------------------------------------------------------------%
%-------------------------------------------------------------------------------------------------------------------------------------------------------%

%-------------------------------------------------------------------------------------------------------------------------------%
% Subsection 4.2.
%-------------------------------------------------------------------------------------------------------------------------------%

\subsection{Non-toric blowups $(X,D)$ of a toric surface $(X_{\Sigma}, D_\Sigma)$}\label{subsec:non-toriclocal}
We now proceed to examine the changes in the number of critical points during the process of non-toric blowups on a given toric surface $(X_{\Sigma}, D_\Sigma)$. We will explicitly locate every non-toric critical points of $W$, and show that they are Morse critical points. The main result is the following.

%
%\begin{thm}\label{thm:non-toricblowup}
%Let $(X_{\Sigma}, D_\Sigma)$ be a toric surface, 
%%whose Hori-Vafa mirror potential $W^{HV}_{\Sigma}$ is {\color{cyan} locally convenient} (Assumption \ref{assume:locconv})
% and let $X$ be the surface obtained after a sequence of non-toric blowups equipped with a symplectic form described in \ref{subsec:toricmodel} with generic parameters $\epsilon_i$. Then
%\begin{enumerate}
%\item[(a)] each (non-toric) blowup at a point in the interior of a toric divisor is responsible for a unique new geometric critical point of $W$ that is non-degenerate. If the blowup point lies in $D_{\Sigma,j+1}$, the corresponding new critical point is located near the corner (the nodal point) $D_{\Sigma,j} \cap D_{\Sigma,j+1}$.
%\item[(b)] If $(X_\Sigma,D_\Sigma)$ is obtained by taking a sequence of blowup on $\mathbb{F}_k$ and the size of each exceptional divisor is small enough (see the beginning of \ref{subsec:toricblowup1}), then each critical point of $W_\Sigma$ extends to that of $W$ keeping the same valuation.
%\end{enumerate}
%\end{thm}

\begin{thm}[\emph{Theorem III}]\label{thm:non-toricblowup}
Let $(X_{\Sigma}, D_\Sigma)$ be a toric surface, 
%whose Hori-Vafa mirror potential $W^{HV}_{\Sigma}$ is {\color{cyan} locally convenient} (Assumption \ref{assume:locconv})
 and let $(X, D)$ be the surface obtained after a sequence of non-toric blowups equipped with a symplectic form described in \ref{subsec:toricmodel} with generic parameters $\epsilon_i$. Then
\begin{enumerate}
\item[(a)] each (non-toric) blowup at a point in the interior of a toric divisor is responsible for a unique new geometric critical point of $W$ which is non-degenerate. If the blowup point lies in $D_{\Sigma,j+1}$, the corresponding new critical point is located near the corner (the nodal point) $D_{\Sigma,j} \cap D_{\Sigma,j+1}$.
\item[(b)] if  $W_\Sigma$ is Morse, then $W$ is also Morse. Moreover, every critical point of $W_\Sigma$ extends to that of $W$ in a valuation-preserving manner.
\end{enumerate}
\end{thm}

%\begin{remark}
%In Proposition \ref{prop:toricblowup}, we will see that the assumption \ref{assume:locconv} implies $\mathrm{Jac}(W_{\Sigma}) = QH^{*}(X_{\Sigma})$ as semi-simple rings for generic parameters. This combined with the above leads to $\mathrm{Jac}(W) = QH^{*}(X)$, both of which are semi-simple as well.
%\end{remark}

%We explicitly locate every non-toric critical points of $W$, and show that they are Morse critical points. 
\begin{proof}
To each blowup point in the toric divisor $D_{\Sigma,j+1}$, we will associate a non-toric critical point one of whose energy minimizing disks projects to the class $\beta_{\nu_j} + k \beta_{\nu_{j+1}}$ under $\pi_\ast$ for some $k$. If such a critical point exists, then it is located near the corner $D_{\Sigma,j} \cap D_{\Sigma,j+1}$ by Proposition \ref{prop:non-toric-crits}, and all the other energy minimizers should also lie in $\mathcal{A}_j \cup \{\beta_{\nu_j},\beta_{\nu_{j+1}} \} $ after $\pi_\ast$ by Lemma \ref{lemma:interiorcritpts}. 

For this purpose, it is natural to begin with a local expansion of $W$ near the corner $D_{\Sigma,j} \cap D_{\Sigma,j+1}$. Obviously, if the point is close enough to this corner, then the images of low energy contributions to $W$ under $\pi_\ast$ belong to classes in $\mathcal{A}_j \cup \{\beta_{\nu_j},\beta_{\nu_{j+1}} \} $. Indeed, if a Lagrangian boundary is close enough to $D_{\Sigma,j} \cap D_{\Sigma,j+1}$, then the only Maslov $2$ disks in its small neighborhood are basic broken disks.

Thus $W$ can be decomposed as $W = \overline{W}_j + W_+$ with
\begin{equation}\label{eqn:wlocalzero}
\begin{array}{lcl}
\displaystyle	\overline{W}_{j} &=& T^{\lambda_{j}}z^{\partial\beta_{\nu_{j}}} + T^{\lambda_{j+1}}z^{\partial\beta_{\nu_{j+1}}} + \displaystyle\sum_{\beta \in \mathcal{A}_{j}} N_\beta T^{\delta(\beta)}z^{\partial \beta} \\
	&=& T^{\lambda_{j}}z^{\partial\beta_{\nu_{j}}} + T^{\lambda_{j+1}}z^{\partial\beta_{\nu_{j+1}}} + {\displaystyle\sum_{k=1}^{l}} { l \choose k}T^{\lambda_j + k \lambda_{j+1}}z^{ \nu_j + k \nu_{j+1}}
\end{array}
\end{equation}
when we set the sizes of exceptional divisors to be zero. Here, $l$ is the number of blowup points in the interior of $D_{\Sigma,j+1}$, so $\mathcal{A}_j = \bigl\{ \, \beta_{\nu_j} + k \beta_{\nu_{j+1}} : 1 \leq k \leq l \, \bigr\}$.
From the above discussion, the valuation of each monomial in $W_+$ is bigger than any of those in $\overline{W}_j$. 

In what follows, we will work with $j=1$ for notational simplicity, where we further assume $\nu_{1}=(1,0)$, $\nu_{2}=(0,1)$, and $\lambda_{1}=\lambda_{2}=0$ by some affine coordinate change as before. However we set the sizes of the exceptional divisors to be generic from now on. Thus, using the energy formula \eqref{eqn:areabrd}, we have
\begin{align}\label{stdlocalmodel}
\begin{split}
\overline{W}_{1} = z_{1}+z_{2}		& +(T^{-\epsilon_{1}}+T^{-\epsilon_{2}}+ \cdots + T^{-\epsilon_{l}})z_{1}z_{2} \\
 								& +(T^{-\epsilon_{1}-\epsilon_{2}}+T^{-\epsilon_{1}-\epsilon_{3}}+ \cdots + T^{-\epsilon_{l-1}-\epsilon_{l}})z_{1}z_{2}^{2} \\
								& + \cdots + T^{-\epsilon_{1}-\epsilon_{2}- \cdots -\epsilon_{l}}z_{1}z_{2}^{l}
\end{split}
\end{align}
which goes back to \eqref{eqn:wlocalzero} when $\epsilon_j=0$ for all $j$. 

Observe that
\begin{align*}
\frac{\partial}{\partial z_{1}}\overline{W}_{1} &= 1 	+ (T^{\epsilon_{1}} + \cdots + T^{\epsilon_{l}})z_{2} + (T^{-\epsilon_{1}}T^{-\epsilon_{2}}+ \cdots + T^{-\epsilon_{l-1}}T^{-\epsilon_{-l}})z_{2}^{2} +\cdots + (T^{-\epsilon_{1}}\cdots T^{\epsilon_{l}})z_{2}^{l}&\\
											&=(1+T^{-\epsilon_{1}}z_{2})(1+T^{-\epsilon_{2}}z_{2})\cdots(1+T^{-\epsilon_{l}}z_{2}).
\end{align*}
Solving $\frac{\partial}{\partial z_{1}}\overline{W}_{1}=0$, we obtain
\[
	z_{2}=-T^{\epsilon_{i}}   \quad \text{for } \,\, 1\leq i \leq l.
\]
Therefore, as long as $\epsilon_{j}$'s are generic, $\overline{W}_{1}$ has $l$-many distinct critical points. 
In this case, one can check that $z_{1} = (-1)^{l}T^{\epsilon_{i}+\sum_{j<i}\epsilon_{j}-\epsilon_{i}}$ modulo higher order terms. 
See \ref{subsec:critval} for the detailed calculation and some related discussion.
Note that these points are all geometric.
%Notice that these critical points lie near the corner of $D_{1}$ and $D_{2}$, which gives an alternate proof of lemma \ref{lemma:interiorcritpts}. 
For example, when $l=3$, we have three solutions with valuations
\[
	\val (\alpha_{1}) = (\epsilon_{1},\epsilon_{1}), \quad \val (\alpha_{2}) = (\epsilon_{1},\epsilon_{2}), \quad \val (\alpha_{3}) = (\epsilon_{1}+\epsilon_{2}-\epsilon_{3},\epsilon_{3}),
\]
where $\epsilon_{1}>\epsilon_{2}>\epsilon_{3}>0$.  In fact, the valuation of the critical point $(z_{1},z_{2})$ lies inside $R_0$ (Lemma \ref{lemma:centralchamber}) if every blowup points in $D_{\Sigma,i}$ locate close enough to the corner $D_{\Sigma,i} \cap D_{\Sigma,i-1}$ for each $i$ (observe that the valuation does not depend on the location of the blowup center).

%See {\color{red} REF(HMS)} for detailed calculation and some related discussion.
\sloppy On the other hand, applying Kushnirenko's theorem (Theorem \ref{Kushnirenko}), we find that $\rvert \mathrm{Crit}(\overline{W}_{1}) \lvert = 2 V(\Delta_{\overline{W}_{1}}) = l$. This implies that each of the above $l$ distinct points has multiplicity $1$, i.e., they are non-degenerate. Every such critical point $\alpha_{j}$ of $\overline{W}_{1}$ can be successfully extended to a geometric critical point of $W$ using Lemma \ref{Woodward} thanks to non-degeneracy of its associated minimal energy terms.

%
%%every critical point $\alpha_{j}$ has degree 1, i.e. $\overline{W}_{1}$ is Morse. 
%
%As a result, the Newton polytope $\Delta_{\overline{W}}$ is subdivided into \textit{convenient unimodal} 2-cells, so that every geometric critical point $\alpha_{j}$ of $\overline{W}_{1}$ can be successfully extended to a critical point of $W$ using Lemma \ref{Woodward}. 

Hence for each boundary divisor of $X$ containing $l$ points in the blowup center, we have successfully identified $l$ distinct Morse (non-toric) critical points for $W$, all located on the left-hand corner of the respective divisors. Since we have worked with all possible minimal energy terms near each corner of the moment polytope, Lemma \ref{Woodward} guarantees that no new geometric (non-toric) critical points arise when patching up local models. This proves (a).
%
%We can now reconstruct $W_{\mathfrak{min}} = W^{HV}_{\Sigma} + \sum_{j=1}^{N}\sum_{\xi \in \mathcal{A}_{j}} T^{\lambda_{\xi}}z^{\xi}$ from our local models $\overline{W}_{j}$. Note that Proposition \ref{prop:non-toric-crits} guarantees that no new geometric (non-toric) critical points arise when patching up local models. 

We next show that the non-toric critical points arising from newly added first order terms do not interfere with any of the previous toric critical points from the Hori-Vafa part, essential reason being that non-toric blowup are done in relatively small scale. To see this, observe that geometric critical points of $W$ other than the newly added ones after blowup are all toric critical points by Theorem \ref{prop:pindownclasses}, i.e., their energy minimizers are basic disks (we have exhausted all possible non-toric critical points of $W$ above). 

We need to show that these toric critical points are in one-to-one correspondence with the (geometric) critical points of $W_{\Sigma}$. The argument is completely parallel to the part \emph{(i)} in the proof of Proposition \ref{prop:toricblowup}. Namely, 
\begin{itemize}
\item[(a)]
when $W_{\Sigma}^{HV}$ is locally convenient, the valuation of a toric critical point of $W$ sits at a vertex of $\mathrm{Trop}(W_{\Sigma}^{HV})$ which establishes one-to-one correspondence between geometric critical points of $W_{\Sigma}$ and toric critical points of $W$ (since they are both determined by inductively solving the same leading order terms at each vertex of $\mathrm{Trop}(W_{\Sigma}^{HV})$). 
\item[(b)] If $W_{\Sigma}^{HV}$ is not locally convenient, it fails to be so on the unique edge $e$ \eqref{eqn:locconvfail} over which we pick up four nongenerate critical points of $W$ by Proposition \ref{prop:gnc}. The other critical points can be handled in the same way as in (a).
\end{itemize}

So far, we have worked with the fixed expression of $W$ that is valid on $R_0$ (given in Lemma \ref{lemma:centralchamber}). As the analytic continuation does not completely cover the complement of $R_0$ (it misses some codimension $1$ region, see Remark \ref{rmk:failac}), we should additionally prove that no critical point occur outsider $R_0$. This will be done in Lemma \ref{lem:nooutr0} below, which finishes the proof.
\end{proof}

%
%Thus we have proven:
%
%
%
%\begin{thm}\label{thm:non-toricblowup}
%Let $(X_{\Sigma}, D)$ be a toric model such that the Newton subdivision of the Hori-Vafa part $W^{HV}_{\Sigma}$ is locally convenient, and let $X$ be the log Calabi-Yau surface obtained after a sequence of non-toric blowups. For every non-toric blowup center on a divisor $D_{j}$, $W$ has precisely one new geometric critical point, located on the left-hand corner of $D_{j}$.
%\end{thm}
%

\begin{lemma}\label{lem:nooutr0}
If all the blowup points on $D_{\Sigma,i}$ are close enough to the corner $D_{\Sigma, i} \cap D_{\Sigma, i-1}$ for every $i$, then the glued LG potential (regardless of the choice of a chamber) does not have geometric critical points outside $R_0$.
\end{lemma}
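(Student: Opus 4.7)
The plan is to argue by contradiction. Suppose $\alpha$ is a geometric critical point with $u:=\val(\alpha)$ lying in a chamber $R'\neq R_0$, and work with the local expression $W_{R'}$ obtained by counting Maslov-$2$ broken lines ending in $R'$. The strategy is to show that the energy-minimization argument of Proposition \ref{prop:pindownclasses} is chamber-independent, so the same classification of energy minimizers applies, and then to use Lemma \ref{lemma:interiorcritpts} together with the tropical pinning in Corollary \ref{cor:Wmin} to localize $u$ to places that in fact belong to $R_0$.

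First I would verify that Proposition \ref{prop:pindownclasses} applies verbatim to $W_{R'}$. Its proof uses only two ingredients: the geometric inequalities $\langle u,\nu_i\rangle + \lambda_i \geq 0$ expressing $u\in \Delta_\Sigma$, and the non-negativity $\delta(\beta)\geq 0$ for any effective Maslov-$2$ class $\beta\in H_2(X,L_u)$. Both properties are intrinsic to the symplectic geometry of $X$ and independent of the chamber in which one computes the potential, as $\delta(\beta)$ is always a non-negative combination of $\lambda_j$'s, $\omega(D_{\Sigma,j})$'s, and $-\epsilon_i$ correction terms (with $\epsilon_i$ arbitrarily small). Hence every energy-minimizing disk at $\alpha$ projects under $\pi_\ast$ either to a basic disk $\beta_{\nu_i}$ or to a basic broken class $\beta_{\nu_j}+k\beta_{\nu_{j+1}}$.

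Next I would split into cases according to the type of minimizer. If some minimizer is a basic broken disk of class $\beta_{\nu_j}+k\beta_{\nu_{j+1}}$, then Lemma \ref{lemma:interiorcritpts} forces $u$ arbitrarily close to the corner $D_{\Sigma,j}\cap D_{\Sigma,j+1}$. If every minimizer is a basic disk, then Corollary \ref{cor:Wmin} combined with the tropical analysis of Section \ref{sec:tropcrit} pins $u$ at a vertex of $\mathrm{Trop}(W^{HV}_\Sigma)$; the non-locally-convenient edge of Proposition \ref{prop:gnc} is treated separately using the explicit valuation formula derived there. In either case the possible location of $u$ depends only on the toric data of $X_\Sigma$ (and, in the first case, on the corner to which it is localized). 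Since our standing assumption places blowup points on each $D_{\Sigma,i}$ arbitrarily close to the corner $D_{\Sigma,i-1}\cap D_{\Sigma,i}$, the initial rays bounding $R_0$ can be made to lie arbitrarily close to the boundary divisors of $\Delta_\Sigma$. Consequently $R_0$ exhausts a neighborhood of every vertex of $\mathrm{Trop}(W^{HV}_\Sigma)$ and of every corner of $\Delta_\Sigma$; the complement of $R_0$ inside $\Delta_\Sigma$ consists only of thin strips that meet none of the permitted locations for $u$. This contradicts $u\in R'\neq R_0$.

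The main obstacle I expect is confirming the chamber-independence claim at the first step. One has to rule out Maslov-$2$ stable disks specific to $R'$ (obtained by broken lines that wind through additional walls not bounding $R_0$) which a priori could supply new minimizers outside the basic/basic-broken classification. The intended argument is that any such disk either lifts from a basic disk in $X_\Sigma$, factors through some subset of blowup centers producing a broken-line configuration whose projection under $\pi_\ast$ is a linear combination $a_1\beta_{\nu_1}+a_2\beta_{\nu_2}$ with $a_1,a_2\geq 1$ that forces $\delta(\beta)>0$ unless it is a basic broken disk, or carries a sphere-bubble component which strictly increases $\delta$. A careful bookkeeping of energies (mirroring the proof of Proposition \ref{prop:pindownclasses}) then reduces the general case to the one treated there and completes the argument.
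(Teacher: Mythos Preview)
Your approach is essentially the same as the paper's: localize any putative geometric critical point in $R'$ via its energy minimizers (basic or basic broken), and then observe that as blowup centers approach the corners the region $R_0$ swallows every such location. The paper, however, handles your ``main obstacle'' more cheaply than you do. Rather than re-proving Proposition~\ref{prop:pindownclasses} in an arbitrary chamber, it simply notes that near a corner the wall structure consists only of parallel initial rays, so crossing a wall from $R_0$ toward the boundary \emph{removes} the corresponding $T^{-\epsilon_i}$ term from \eqref{stdlocalmodel} and adds nothing new. Thus the leading part of $W_{R'}$ is a sub-expression of \eqref{stdlocalmodel}; in the outermost strip it degenerates to $z_1+z_2$, which has no critical point. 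Your worry about exotic broken disks in $R'$ producing new minimizers is therefore moot: fewer broken disks are realized in $R'$, not more, and your proposed $\delta(\beta)>0$ bookkeeping is unnecessary. One phrasing to tighten: ``$R_0$ exhausts a neighborhood of every corner'' should be read as ``$R_0$ eventually contains any fixed interior point,'' since the non-toric critical valuations sit at $\epsilon_i$-based locations independent of the blowup positions, while the strips $R'$ shrink to the boundary.
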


\begin{proof}
From the discussion in \ref{subsec:toricblowup1}, we see that the valuations of toric critical points of $W$ only depend on $W_\Sigma^{HV}$, and in particular there are only finitely many. Since $R_0$ exhausts the whole moment polytope as points in the blowup centers approach the nearby corners, it contains all toric critical point at some point. 

On the other hand, Theorem \ref{lemma:interiorcritpts} justifies that the local calculation in the proof of Theorem \ref{thm:non-toricblowup} (see \eqref{stdlocalmodel}) is still valid on any chamber, except that we need to remove terms containing $\epsilon_i$ as we pass to the left side of the corresponding wall (speaking in the setting of the proof of Theorem \ref{thm:non-toricblowup}). In particular, if we approach close enough to the boundary of the moment polytope, we are only left with $z_{1}+z_{2}$ in the local expression of $W$ which does not admit any critical point.
\end{proof}

The following corollary is straightforward.

\begin{cor}\label{cor:critnum}
In the situation of (b) Theorem \ref{thm:non-toricblowup}, the number of critical of $W$ agrees with the rank of the cohomology of $X$.
\end{cor}

\begin{proof}
Corollary \ref{cor:toricnum} tells us that $W_\Sigma$ has as many geometric critical points as the rank of $H^\ast(X_\Sigma)$. This is based on the induction on the number of (toric) blowups, where the base case of the induction is covered by Lemma \ref{lem:unimodal1} and Proposition \ref{prop:gnc}.  
Then Theorem \ref{thm:non-toricblowup} implies that the number of geometric critical points of $W$ equals the rank of cohomology of $X$ since they are both increased by the number of (non-toric) blowups from $X_\Sigma$ to $X$. 
\end{proof}

We conclude that performing non-toric blowups (with generic $\epsilon_{i}$) results in the attachment of unimodal 2-cells to the Newton polytope of $W_{\Sigma}$, which translates to a ``splitting'' of an unbounded branch of the tropicalization $\mathrm{Trop}( W_{\Sigma} )$.

\begin{figure}[h]
\centering
\begin{tikzpicture}[scale=0.65]

	\draw[line width=0.7pt] (0,5)--(0,0)--(5,0);
	\draw[line width=0.7pt, blue] (0.5,0.5)--(5,5);
	\draw[line width=0.7pt, blue, dashed] (0.5,0.5)--(-3,-3);
	\draw[line width=0.7pt,red] (0.5,0.5)--(-3,0.5);
	\draw[line width=0.7pt,red] (0.5,0.5)--(0.5,-3);

	\draw (2.5,2) node[right] {\small{$\mathrm{Trop}( W_{\Sigma} )$}};

\end{tikzpicture}
\caption{Branching of $\mathrm{Trop}( W_{\Sigma} )$ locally.} \label{branching}
\end{figure}

 In particular, performing non-toric blowups leaves $\mathrm{Trop}( W_{\Sigma} )$ unchanged but one branch, specifically the branch representing the corner near the blowup center. The unique new geometric critical point is supported at the point where the branch splits, whereas the non-geometric critical points that has been removed precisely corresponds to the vertices of $\mathrm{Trop}( W_{\mathfrak{min}} )$ arising from the intersection between the resulting split branches and other branches.\\

%-------------------------------------------------------------------------------------------------------------------------------%
\begin{example}\label{ex:non-toricblowups}

Let $(X_{\Sigma},D_{\Sigma}=\Sigma_{i=1}^{5}D_{\Sigma,i})$ be the semi-Fano surface obtained by blowing up $\mathbb{P}^{2}$ twice as illustrated with the dotted lines in Figure \ref{fig:non-toric-example1}. The Hori-Vafa part of the potential is given by 
\[
	W^{HV}_{\Sigma} = z_{1} + z_{2} + T^{a}\frac{1}{z_{1}z_{2}} + T^{b}\frac{1}{z_{2}} + T^{c}\frac{z_{1}}{z_{2}}.
\]
The gray region in Figure \ref{fig:non-toric-example2} represents the Newton subdivision $\mathcal{S}_{W^{HV}_{\Sigma}}$ of $X_{\Sigma}$, which is locally convenient. Now consider the log Calabi-Yau surface $(X,D)$ obtained by taking three non-toric blowups on $D_{\Sigma,2}$ and one non-toric blowup on $D_{\Sigma,4}$ (indicated by crosses). Then four unimodal 2-cells, namely $\sigma_{1}$, $\sigma_{2}$, $\sigma_{3} \in \mathrm{Cone}(\nu_{1},\nu_{2})$ and $\sigma_{4}\in \mathrm{Cone}(\nu_{3},\nu_{4})$ are attached to $\mathcal{S}_{W^{HV}_{\Sigma}}$. Note that the resulting surface $X$ is no longer Fano. The blue line in Figure \ref{fig:non-toric-example1} depicts the tropicalization $\mathrm{Trop}(W_{\mathfrak{min}})$ of the minimal energy terms, and the blue dotted lines indicate branches of $\mathrm{Trop}( W^{HV}_{\Sigma} )$ before splitting. The critical point information of $W$ can be read off of the Newton subdivision $\mathcal{S}_{ W_{\mathfrak{min}} }$ and the tropicalization $\mathrm{Trop}( W_{\mathfrak{min}} )$ as follows:

\noindent\emph{1.} The three vertices of $\mathrm{Trop}( W_{\mathfrak{min}} )$ lying away from the boundary divisors corresponds to the three 2-cells of $\mathcal{S}_{W^{HV}_{\Sigma}}$, each of which supports $1$, $1$, and $3$ toric critical points.

\noindent\emph{2.} The three vertices of $\mathrm{Trop}( W_{\mathfrak{min}} )$ near the origin, as well as the vertex near the corner $D_{\Sigma,3} \cap D_{\Sigma,4}$, are dual to the unimodal 2-cells $\sigma_{1}$, $\sigma_{2}$, $\sigma_{3}$, and $\sigma_{4}$, respectively. Each of these vertices supports a single non-toric geometric critical point.

\noindent\emph{3.} On the other hand, the red vertex above $D_{\Sigma,4}$ lying outside the moment polytope is dual to the 2-cell excluded by Proposition \ref{prop:non-toric-crits}, running across different cones. The excluded 2-cell is  represented with dashed line in Figure \ref{fig:non-toric-example2}.

%--------------------------------------------------------------------------------------------------------------------------------------------------------%
\begin{figure}[h]
\centering
%=================================%
%=================================%
\subcaptionbox{%
	The moment polytope of $X_{\Sigma}$ and the tropicalization of $W_{\mathfrak{min}}$.
	\label{fig:non-toric-example1}%
	}
	{\makebox[0.45\linewidth][c]
		{\begin{tikzpicture}[scale=0.11]
				
		\coordinate (A) at (0,0);
		\coordinate (B) at (0,20);
		\coordinate (C) at (10,30);
		\coordinate (D) at (20,30);
		\coordinate (E) at (50,0);

		\draw[line width = 1pt] (A) -- (B) -- (C) -- (D) -- (E) -- cycle;
		\draw[dotted, line width = 1pt] (D) -- (0,50) -- (0,20);
		\draw[dotted, line width = 1pt] (0,30) -- (10,30);

		\draw [line width=0.7pt, color=blue!70] (-10,3)--(3,3)--(3,2)--(-10,2);
		\draw [line width=0.7pt, color=blue!70] (3,2)--(4,1)--(-10,1);
		\draw [line width=0.7pt, color=blue!70] (4,1)--(26,-9);
		\draw [line width=0.7pt, color=blue!70] (3,3)--(15,15)--(20,15)--(20,29)--(10,39)--(5,49);
		\draw [line width=0.7pt, color=blue!70] (20,29)--(50,29);
		\draw [line width=0.7pt, color=blue!70] (20,15)--(60,-5);
	
		\draw [line width=0.7pt, color=blue!70] (-10,20)--(10,20)--(10,39);
		\draw [line width=0.7pt, color=blue!70] (10,20)--(15,15);
	
		\draw [dashed, line width=0.7pt, color=blue!70] (3,3)--(-10,-10);
		\draw [dashed, line width=0.7pt, color=blue!70] (20,29)--(20,49);	
		\draw[line width =0.7pt] (1,0) node[cross] {};
		\draw[line width =0.7pt] (2.3,0) node[cross] {};
		\draw[line width =0.7pt] (3.7,0) node[cross] {};
  		\draw[line width =0.7pt] (19,30) node[cross] {};
		\filldraw[color=red] (10,39) circle (15pt);
	  	  
		\draw (0,30) node[left] {$b$};
		\draw (0,17) node[left] {$c$};
		\draw (0,48) node[left] {$a$};

		\draw (27,0) node[below] {\small{$D_{\Sigma,2}$}};
		\draw (35,17) node[above] {\small{$D_{\Sigma,3}$}};
		\draw (14,30) node[below] {\small{$D_{\Sigma,4}$}};
		
		\end{tikzpicture}%
	}
	}\hfill%
%=================================%
%=================================%
\subcaptionbox{%
	The Newton subdivision $\mathcal{S}_{ W_{\mathfrak{min}}}$ of the minimal terms.
	\label{fig:non-toric-example2}%
	}
	{\makebox[0.45\linewidth][c]	
		{\begin{tikzpicture}[scale=0.13]
		
		\draw[line width = 1pt] (10,30)--(10,-10)--(0,-10)--(-10,-20)--(-10,-10)--cycle;

		\draw[line width = 1pt] (0,10)--(10,20);
		\draw[line width = 1pt] (0,10)--(10,10);
		\draw[line width = 1pt] (0,10)--(10,0);
		\draw[line width = 1pt] (10,0)--(-10,-10);
		\draw[line width = 1pt] (-10,-10)--(0,-10);
		\draw[line width = 1pt] (0,-10)--(10,0);

		\fill[green!30, draw=none, opacity =0.4] (0,-10)--(-10,-20)--(-10,-10);

		\fill[green!30, draw=none, opacity =0.4] (10,10)--(10,0)--(0,10);
		\fill[red!30, draw=none, opacity =0.4] (10,10)--(10,20)--(0,10);
		\fill[blue!30, draw=none, opacity =0.4] (10,30)--(10,20)--(0,10);
		\fill[gray, draw=none, opacity =0.4] (0,10)--(10,0)--(10,-10)--(-10,-10);

		\draw[line width =0.5pt, -latex] (0,0)--(-10,-10);
		\draw[line width =0.2pt] (0,0)--(-20,-20);
		\draw[line width =0.5pt, -latex] (0,0)--(0,-10);
		\draw[line width =0.2pt] (0,0)--(0,-20);
		\draw[line width =0.5pt, -latex] (0,0)--(10,-10);
		\draw[line width =0.2pt] (0,0)--(15,-15);
		\draw[line width =0.5pt, -latex] (0,0)--(10,0);
		\draw[line width =0.2pt] (0,0)--(15,0);
		\draw[line width =0.5pt, -latex] (0,0)--(0,10);
		\draw[line width =0.2pt] (0,0)--(0,20);

		\draw (9.5,7) node[left] {\small{$\sigma_{1}$}};
		\draw (9.5,13) node[left] {\small{$\sigma_{2}$}};
		\draw (9.5,19.8) node[left] {\small{$\sigma_{3}$}};
		\draw (-9.5,-14) node[right] {\small{$\sigma_{4}$}};

		\draw (12,0) node[below] {\small{$\nu_{1}$}};
		\draw (0,10) node[left] {\small{$\nu_{2}$}};
		\draw (-12,-10) node[above] {\small{$\nu_{3}$}};
		\draw (0,-12) node[right] {\small{$\nu_{4}$}};
		\draw (10,-10) node[right] {\small{$\nu_{5}$}};

		\node[circle,fill=black,inner sep=0pt,minimum size=3pt] (a) at (10,30) {};
		\node[circle,fill=black,inner sep=0pt,minimum size=3pt] (a) at (10,20) {};
		\node[circle,fill=black,inner sep=0pt,minimum size=3pt] (a) at (10,10) {};
		\node[circle,fill=black,inner sep=0pt,minimum size=3pt] (a) at (10,0) {};
		\node[circle,fill=black,inner sep=0pt,minimum size=3pt] (a) at (10,-10) {};
		\node[circle,fill=black,inner sep=0pt,minimum size=3pt] (a) at (0,-10) {};
		\node[circle,fill=black,inner sep=0pt,minimum size=3pt] (a) at (-10,-20) {};
		\node[circle,fill=black,inner sep=0pt,minimum size=3pt] (a) at (-10,-10) {};
		\node[circle,fill=black,inner sep=0pt,minimum size=3pt] (a) at (0,10) {};

		\draw[dashed] (-10,-20) -- (10,-10);
		
		\end{tikzpicture}
		}
	}%
%=================================%
%=================================%
\caption{Example \ref{ex:non-toricblowups}.}
\label{fig:non-toric-example0}
\end{figure}%--------------------------------------------------------------------------------------------------------------------------------------------------------%
\end{example}

%--------------------------------------------------------------------------------------------------------------------------------------------------------%

\subsection{Bulk-deformed potential (with toric bulk insertions only in non-Fano)}

We finally return to potential functions with bulk, in particular the bulk-deformation $W^{\mathfrak{b}}$ of $W$ associated with the bulk parameter $\mathfrak{b} \in H^{even} (X;\Lambda_+)$ which are pulled back from some torus invariant cycles in $X_\Sigma$. This restriction is to guarantee the weakly-unobstructedness (Lemma \ref{lem:torusweakunobs}), and we can consider more general bulk-insertions in the semi-Fano case where the weakly-unobstructedness is automatic by the obvious degree reason.

%
%when the point constraint $\mathfrak{b} = T^{\lambda_{0}} \, [pt] \in H^{4}(X;\Lambda_{+})$ is a torus invariant cycle. 
Following \cite{FOOO, FOOO-T2}, $W^\mathfrak{b}$ is defined as 
\begin{align*}
W^{\mathfrak{b}}_u (\underline{z}) = \sum_{k=0}^{\infty} \,  \sum_{\beta} \,  \frac{1}{k!}  \, \mathfrak{q}_{k}(\beta; \mathfrak{b}, \dots, \mathfrak{b}) \, T^{\omega (\beta)} \, \underline{z}^{\partial \beta}   
%\\ &=\sum_{k=0}^{\infty} \,  \sum_{\beta} \,  \frac{1}{k!}  \, \mathfrak{q}_{k}(\beta; [pt], \dots, [pt]) \,T^{k\lambda_{0}+ \lambda_{\beta}} \, z^{\xi_{\beta}},
\end{align*}
where the sum is taken over $\beta \in \pi_{2}(X, L_{u})$. Here, $\mathfrak{q}_{k}(\beta; \mathfrak{b}, \dots, \mathfrak{b})$ is roughly the count of holomorphic disks with $k$ interior marked points incident to the ambient cycle $\mathfrak{b}$ and passing through a generic point of the fiber $L_{u}$. We refer readers to \cite{FOOO} or \cite{FOOO-T2} for more details. 
%
%{\color{red}REWRITE; introduce $\mathfrak{q}$; Here, we denote $\xi_{\beta} = (\langle \partial \beta, e_{1} \rangle, \langle \partial \beta, e_{2} \rangle)$ and $\lambda_{\beta} = T^{\int_{A_{\partial{\beta}}} \omega}$ for simplicity. Recall that the open Gromov-Witten invariant $\nu_{k,\beta}$ = ${ev_{0}}_{*} ([\mathcal{M}_{1,k}(X,L,\beta ; \mathfrak{b}, \dots \mathfrak{b})]^{vir}) \in \mathbb{Q}$ is the (virtual?) count of holomorphic disks (passing through a generic point of the fiber $L_{u}$) with $k$ interior marked points, constricted to $\mathfrak{b}$. (Note that $ T^{\lambda_{0}} \in \Lambda_{+}$ ensures that this sum converges.)}

Note that the Maslov index of the class $\beta$ can be strictly bigger than $2$ when it admits the insertion from the point class ($\in H^4(X)$), and hence its contribution to $W^{\mathfrak{b}}$ has relatively high energy in general. This is also the case for degree-2 bulk insertion $\mathfrak{b}$, as $\mathfrak{b}$ itself carries a positive energy in its $\Lambda_+$-coefficient. In fact, one can prove that such extra terms of $W^\mathfrak{b}$ arising from nontrivial $\mathfrak{b}$ do not affect the leading term potential at any geometric critical points. To see this, let us denote by $W^{\mathfrak{b}}_{\mathfrak{min}}$ the collection of terms in $W^{\mathfrak{b}}$ induced from the disks that can possibly have minimal energy at some geometric critical point of $W^{\mathfrak{b}}$.

\begin{lemma}
For generic parameters, $W^{\mathfrak{b}}_{\mathfrak{min}} = W_{\mathfrak{min}}$.
\end{lemma}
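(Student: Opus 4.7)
The plan is to adapt the energy estimate of Proposition \ref{prop:pindownclasses} to the bulk-deformed setting. Concretely, I would show two inclusions: first, $W_{\mathfrak{min}} \subseteq W^{\mathfrak{b}}_{\mathfrak{min}}$ because the bulk does nothing to the $k=0$ term $\mathfrak{q}_0(\beta) T^{\omega(\beta)} \underline{z}^{\partial\beta}$, so every basic disk and basic broken disk still appears in $W^{\mathfrak{b}}$ with the same coefficient up to (generically nonzero) constants; second, and this is the content of the lemma, $W^{\mathfrak{b}}_{\mathfrak{min}} \subseteq W_{\mathfrak{min}}$, meaning that no new term introduced by $\mathfrak{b}$ can realize the minimum energy at a geometric critical point.

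For the second inclusion, I would run the same inequality argument as in Proposition \ref{prop:pindownclasses} on a hypothetical energy minimizing term arising from $\mathfrak{q}_k(\beta;\mathfrak{b},\ldots,\mathfrak{b})$ with $k\geq 1$. Write $\mathfrak{b} = \sum_l T^{\rho_l} \mathfrak{b}_l$ with $\rho_l>0$ and $\mathfrak{b}_l$ a torus-invariant cycle in $X_\Sigma$. The coefficient of the monomial $\underline{z}^{\partial\beta}$ contributed by such a $\beta$-disk then carries an extra factor $T^{\sum \rho_{l_i}}$ on top of $T^{\omega(\beta)}$, so the effective ``$\delta$-shift'' becomes $\delta(\beta) + \sum \rho_{l_i}$. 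Repeating the case analysis of Proposition \ref{prop:pindownclasses} with this augmented shift, the inequalities \eqref{condi:standard} force $\delta(\beta) + \sum \rho_{l_i}=0$, which is impossible whenever some $\rho_{l_i}>0$. This rules out the contribution of disks that carry any degree-$2$ insertion.

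Next, I handle insertions in $H^4$ (the point class). Since $\mathfrak{b}_l$ is torus-invariant and pulled back from $X_\Sigma$, a disk $\beta$ with $k$ interior markings incident to point insertions satisfies $\mu(\beta) \geq 2 + 2k$ in $X_\Sigma$ (and, after proper transform in $X$, $\mu(\beta)\geq 2$ still holds by our Maslov index $2$ requirement but with extra intersection with $D_\Sigma$). This translates to strictly increasing $\delta(\beta)$ by a positive combination of the $\lambda_i$ and $\omega(D_{\Sigma,i})$. Plugging this augmented $\delta(\beta)$ into the same inequalities \eqref{condi:standard} again yields $\delta(\beta)=0$, which is inconsistent with the strict positivity coming from the extra intersections with toric divisors. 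Hence no such bulk-corrected term can be an energy minimizer at a geometric critical point, and it is excluded from $W^{\mathfrak{b}}_{\mathfrak{min}}$.

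The main obstacle I anticipate is making the ``extra energy'' bookkeeping completely precise when $\beta$ has both sphere-bubble components and insertions, since one must verify that the proper-transform correction by $-\sum a_i \epsilon_i$ in \eqref{eqn:areabrd} cannot conspire with the bubble contribution to make $\delta(\beta)+\sum\rho_{l_i}$ vanish. This is where genericity is used: for generic small $\epsilon_i$ and generic $\rho_l$, no nontrivial integer linear combination of $\{\lambda_i,\omega(D_{\Sigma,i}),\epsilon_i,\rho_l\}$ vanishes apart from the ones already identified as basic (broken) disk energies, so the strict inequality is preserved. Collecting these observations yields $W^{\mathfrak{b}}_{\mathfrak{min}} = W_{\mathfrak{min}}$.
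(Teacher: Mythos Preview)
Your proposal is correct and follows essentially the same approach as the paper: both rerun the energy estimate from Proposition~\ref{prop:pindownclasses} for the bulk-deformed terms and conclude that the augmented $\delta$-shift cannot vanish at a geometric critical point. The paper is terser---it simply observes that the argument of Proposition~\ref{prop:pindownclasses} still yields $\delta(\beta)=0$, then identifies the single new possibility (higher Maslov disks hitting the corner $D_{\Sigma,1}\cap D_{\Sigma,2}$ as point-class insertions) and dismisses it by direct energy comparison with the basic disks $\beta_{\nu_1},\beta_{\nu_2}$---whereas you separate degree-$2$ and degree-$4$ insertions explicitly and invoke genericity to rule out accidental cancellations. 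Your final paragraph on genericity is slightly over-cautious: since the $\epsilon_i$ are taken arbitrarily small relative to the $\lambda_i$ and $\rho_l$, no delicate linear-independence argument is needed; the strict positivity of $\sum\rho_{l_i}$ (or of the extra $\lambda_i$'s for higher Maslov disks) already dominates.
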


\begin{proof}
Let $\beta$ be any holomorphic disk with $ \mathfrak{q}_{k}(\beta; \mathfrak{b}, \dots, \mathfrak{b}) \neq 0$, and suppose that $\partial \beta$ is contained in $\mathrm{Cone}(\nu_{1},\nu_{2})$, where we assume $\nu_{1}=(1,0)$, $\nu_{2}=(0,1)$ and $\,\lambda_{1}=\lambda_{2}=0$ without loss of generality. Our argument in the proof of Theorem \ref{prop:pindownclasses} remains valid, showing that disks in the class $\beta$ can be energy minimizing at a geometric critical point only if $\delta(\beta) = 0$ (see \eqref{eqn:deltabetai}). As before, this forces $\pi_\ast(\beta)$ to be a linear combination of $\beta_{\nu_{1}}$ and $\beta_{\nu_{2}}$. Previously, we concluded at this point that $\beta$ are either s basic or a basic broken disk due to Maslov 2 constraint. Since we now allow bulk-deformations via the point cycle $D_{\Sigma,1} \cap D_{\Sigma,2}$, there is the third possibility of $\beta$ being the proper transform of a higher Maslov disk in $X_\Sigma$, hitting the corner $D_{\Sigma,1} \cap D_{\Sigma, 2}$ (included in $\mathfrak{b}$) multiple times. However these disks should have strictly bigger energies than the proper transforms of $\beta_{\nu_{1}}$ and $\beta_{\nu_{2}}$ at any $u \in B$,
%(we do not have the effect of $-\epsilon$ due to $-\epsilon [E]$ added to $\omega_\Sigma), 
and hence cannot serve as a energy minimizing disk anywhere.
%
%, \emph{(i)} if $\nu_{0,\beta} = \nu_{\beta} \neq 0$, i.e., $\beta \in \mathrm{supp} W_{\mathfrak{min}}$, then the leading term remains the same since $\lambda_{\beta} < \lambda_{\beta} + k\lambda_{0}$, and \emph{(ii)} if $\nu_{0,\beta} = 0$, then $k\geq1$ and $k\lambda_{0}+ \lambda_{\beta} > 0$, thus cannot contribute to a geometric critical point. In particular, we have $W^{\mathfrak{b}}_{\mathfrak{min}} = W_{\mathfrak{min}}$. 
\end{proof}

Thus we have an analogue of Corollary \ref{cor:critnum} for bulk-deformed potentials.

 \begin{cor}\label{cor:bulkcritnum}
Let $(X_\Sigma,D_\Sigma)$ be a toric surface,  and let $(X, D)$ be the surface obtained after a sequence of non-toric blowups.
Generically, the number of critical of $W^\mathfrak{b}$ agrees with the rank of the cohomology of $X$. Here, $\mathfrak{b} \in H^{even} (X;\Lambda)_+$ is a bulk parameter (which requires to be the lift of a torus-invariant cycle in $X_\Sigma$ when $D$ contains a sphere with a negative Chern-number).
\end{cor}

We summarize our discussion so far in the frame of closed-string mirror symmetry for algebraic surfaces $(X,D)$ obtained by taking (a sequence of) non-toric blowup on a toric surface $(X_\Sigma,D_\Sigma)$. We consider the quantum cohomology ring $QH^\ast_\mathfrak{b} (X) =H^\ast (X;\Lambda)$ equipped with deformed cup product $\star_\mathfrak{b}$. The coefficient of $z$ in $x \star_\mathfrak{b} y$ is given by the Gromov-Witten invariants $\sum_{n} \sum_{A \in H_{2}(X)} GW_{0,n+3}^A (x,y,z, \mathfrak{b},\mathfrak{b},\cdots,\mathfrak{b})$ which roughly counts the number of holomorphic spheres passing through $x$, $y$, $z$ and arbitrary many $\mathfrak{b}$'s. For more details, see, for e.g., \cite[Chapter 11]{McS}. 
%$\mathfrak{b}$ is required to be a pull-back of a torus-invariant cycle in $X_\Sigma$ when $D$ is negative in order to guarantee the weakly-unobstructedness.

On the mirror side, we take the Jacobian ideal ring of the potential function $W$. Lemma \ref{lem:nooutr0} allows us to regard $W^{\mathfrak{b}}$ as a function defined on $\val^{-1} (R_0)$, where $R_0$ is sufficiently close to the entire base $B$ (which approximates the moment polytope of $X_\Sigma$). Then the Jacobian ideal ring $\mathrm{Jac} (W^{\mathfrak{b}})$ of $W^{\mathfrak{b}}$ can be defined as in \cite[Definition 1.3.10]{FOOO10b}, but with the moment polytope replaced by $R_0$. Roughly speaking, the Jacobian ideal ring is the quotient of the convergent power series ring $\Lambda \ll z_{1}^{\pm 1}, z_{2}^{\pm 1} \gg$, consisting of power series that converge with respect to the valuation $z^{\partial \beta_{\nu_j}} \mapsto \lambda_j + \langle u, \partial \beta_{\nu_j} \rangle$ for all $j$ and $u \in R_0$ or its associated norm.\footnote{Our $\lambda_j$ corresponds to $-\lambda_j$ in \cite{FOOO10b}.} (Recall $z_{1} = z^{\partial \beta_{\nu_{1}}}$ and $z_{2}=z^{\partial \beta_{\nu_{2}}}$.) Notice that the valuation is nothing but the areas of basic disks. 
The ideal in the quotient process is generated by partial derivatives of $W^{\mathfrak{b}}$, but we may need to take its closure with respect to $T$-adic topology. We refer readers to \cite{FOOO10b} for more details.\\

We are now ready to state our main theorem:

\begin{thm}[\emph{Theorem I}]\label{main}
In the situation of Corollary \ref{cor:bulkcritnum}, we have
%Let $(X_\Sigma,D_\Sigma)$ be a toric surface obtained by taking a sequence of blowups on $\mathbb{F}_k$ such that the symplectic areas of exceptional divisors are small enough.
%For any non-toric blowup $(X,D)$ of a toric surface $(X_\Sigma,D_\Sigma)$,
\[
	\mathrm{Jac} (W^{\mathfrak{b}}) = QH^{*}_{\mathfrak{b}}(X).
\]
%where $\mathfrak{b} \in H^{even} (X;\Lambda)_+$ is a bulk parameter. It require to be the lift of a torus-invariant cycle in $X_\Sigma$ when $D$ contains a sphere with a negative Chern-number.
\end{thm}

\begin{proof}
Recall from Lemma \ref{lem:unimodal1} that for generic parameters, the Newton subdivision is a locally convenient triangulation, apart from the cell(s) containing the origin (in their closure). For each convenient triangle $\sigma$, direct computation on the leading terms yields the desired number ($= \mathrm{Vol}(\sigma)$) of non-degenerate critical points (Remark \ref{rmk:volmany}), whereas Proposition \ref{prop:gnc} covers the case when there exists a non-convenient edge. That is, for generic parameters, the bulk-deformed potential $W^{\mathfrak{b}}$ is Morse.
%We have shown that all geometric critical points of $W^{\mathfrak{b}}$ are non-degenerate for generic parameter.
Hence, by \cite[Proposition 1.3.16]{FOOO10b}, the left hand side is isomorphic to the semi-simple ring $\prod_{\alpha \in crit(W_{R_0})} \Lambda$ where $\alpha$ runs over the set of critical points of $W$ over $R_0$ which equals the set of all geometric critical points of $W$ due to Lemma \ref{lem:nooutr0}. On the other hand, the right hand side is also known to be semi-simple by \cite{Bayer}. We see that both sides are semi-simple and have the same rank by Corollary \ref{cor:bulkcritnum}, which completes the proof.
 \end{proof}

\subsection{Continuum of critical points}\label{subse:continuum}
%-------------------------------------------------------------------------------------------------------------------------------------------------------%

Even though we can pin-down the precise location of all (geometric) critical points of $W^{\mathfrak{b}}$ for generic $\omega$, for some special $\omega$, the bulk deformed potential might have a continuum of critical points, as observed in \cite{FOOO-T2}. Interestingly, this phenomena occurs when $X$ with a non-convenient Newton polytope is equipped with a ``locally monotone'' symplectic form. The following example analyzes the critical behavior of $\mathbb{F}_{k}$ blown up once at the origin.

%-------------------------------------------------------------------------------------------------------------------------------------------------------%
\begin{figure}[h]
\centering
%=================================%
%=================================%
\subcaptionbox{%
	Blowup of $\mathbb{F}_{k}$ when $c < \frac{b}{2}$. The red point $(\frac{1}{2}a-\frac{k}{4}b,\frac{b}{2})$ supports four toric critical points, and the green point $(c,c)$ supports a single toric critical point.
	\label{fig:continuum1}%
	}
%	{\makebox[0.32\linewidth][c]
		{\begin{tikzpicture}[scale=0.54]
				
		\draw[line width =1pt] (0,1.7)--(0,5)--(5,5)--(20,0)--(1.7,0)--cycle;
		\draw[line width =0.7pt, dashed] (0,2.5)--(2.5,2.5)--(2.5,0);
		
		\draw (20,0) node[below] {$a$};
		\draw (0,5.4) node[left] {$b$};
		\draw (0,1.7) node[left] {$c$};		
		\draw (3,0) node[below] {$\frac{b}{2}$};
		\draw (1.5,0) node[below] {$c$};				
		
		\draw[line width =0.7pt, color=blue] (0,5)--(2.5,2.5)--(1.7,1.7);
		\draw[line width =0.7pt, color=blue] (0,1.7)--(1.7,1.7)--(1.7,0);		
		\draw[line width =0.7pt, color=blue] (5,5)--(7.5,2.5);
		\draw[line width =0.7pt, color=blue] (7.5,2.5)--(2.5,2.5);
		\draw[line width =0.7pt, color=blue] (7.5,2.5)--(20,0);		
		\filldraw[color=green] (1.7,1.7) circle (2pt);
		
		\draw[line width =0.7pt, dashed] (2.5+5/2,2.5)--(2.5+5/2,0);
		\draw (3,0) node[below] {$\frac{b}{2}$};
		\draw (2.5+5/2+0.08,2.59) node[above] {$P$};						
		\draw (6,0) node[below] {$\frac{1}{2}a-\frac{k}{4}b$};
		\filldraw[color=red] (2.5+5/2,2.5) circle (2pt);
		
		\end{tikzpicture}%
%		}
	}\hfill%
%=================================%
%=================================%
\subcaptionbox{%
	Blowup of $\mathbb{F}_{k}$ when $c = \frac{b}{2}$. The green point at $(\frac{1}{3}(a-\frac{k}{2}b + \frac{b}{2}),\frac{b}{2})$ supports three critical points, and the red point at $(\frac{1}{2}a-\frac{k}{4}b, \frac{b}{2})$ supports two.
	\label{fig:continuum2}%
	}
%	{\makebox[0.32\linewidth][c]	
		{\begin{tikzpicture}[scale=0.54]
				
		\draw[line width =1pt] (0,2.5)--(0,5)--(5,5)--(20,0)--(2.5,0)--cycle;
		\draw[line width =0.7pt, color=blue] (0,2.5)--(2.5,2.5)--(2.5,0);
		
		\draw (20,0) node[below] {$a$};
		\draw (0,5.4) node[left] {$b$};
		\draw (0,2.3) node[left] {$c=\frac{b}{2}$};		
		\draw (1.5,0) node[below] {$c=\frac{b}{2}$};
				
		\draw[line width =0.7pt, color=blue] (0,5)--(2.5,2.5);	
		\draw[line width =0.7pt, color=blue] (5,5)--(7.5,2.5);
		\draw[line width =0.7pt, color=blue] (7.5,2.5)--(2.5,2.5);
		\draw[line width =0.7pt, color=blue] (7.5,2.5)--(20,0);
		
		\draw[line width =0.7pt, dashed] (2.5+5/3,2.5)--(2.5+5/3,0);
		\filldraw[color=green] (2.5+5/3,2.5) circle (2pt);
		\filldraw[color=red] (2.5+5/2+0.05,2.5) circle (2pt);
		
		\draw (5.5,0) node[below] {$\frac{1}{3}(a-\frac{k}{2}b + \frac{b}{2})$};
		\draw (2.5+5/2+0.08,2.59) node[above] {$P$};	
		\draw (2.5+5/3,2.5) node[above] {$Q$};			
		
		\end{tikzpicture}
%		}
	}\hfill%
%=================================%
%=================================%
\subcaptionbox{%
	The critical behavior of the bulk-deformed potential $W^{\mathfrak{b}}$ when $c=\frac{b}{2}$. 
	\label{fig:continuum3}%
	}
%	{\makebox[0.32\linewidth][c]	
		{\begin{tikzpicture}[scale=0.54]
				
		\draw[line width =1pt] (0,2.5)--(0,5)--(5,5)--(20,0)--(2.5,0)--cycle;
		\draw[line width =0.7pt, dashed] (0,2.5)--(2.5,2.5)--(2.5,0);
		
		\draw (20,0) node[below] {$a$};
		\draw (0,5.4) node[left] {$b$};
		\draw (0,2.3) node[left] {$c=\frac{b}{2}$};		
				
		\draw[line width =0.7pt,dashed] (0,5)--(2.5,2.5);	
		\draw[line width =0.7pt, dashed] (5,5)--(7.5,2.5);
		\draw[line width =0.7pt, dashed] (7.5,2.5)--(2.5,2.5);
		\draw[line width =0.7pt, dashed] (7.5,2.5)--(20,0);
		
		\draw[line width =1.5pt,-latex, color=red] (2.5,2.5)--(2.5+5/3,2.5);
		\draw[line width =1.5pt,-latex, color=blue] (2.5+5/2,2.5)--(2.5+5/3,2.5);		
				
		\draw[line width =0.7pt, dotted] (2.5+5/3,2.5)--(2.5+5/3,0);
		\draw[line width =0.7pt, dotted] (2.5+5/2,2.5)--(2.5+5/2,0);
		\filldraw (2.5+5/3,2.5) circle (2pt);		
		
		\filldraw (2.5+5/2,2.5) circle (2pt);
		\draw (2.5+5/2+0.08,2.59) node[above] {$P$};
		\draw (2.5+5/3,2.5) node[above] {$Q$};
		
		\end{tikzpicture}
%		}
	}%
%=================================%
%=================================%
\caption{}
\label{fig:continuum0}
\end{figure}
%-------------------------------------------------------------------------------------------------------------------------------------------------------%

%-------------------------------------------------------------------------------------------------------------------------------------------------------%
 
Let $X$ be the blowup of $\mathbb{F}_{k}$ at the origin (Figure \ref{fig:continuum1}). The Hori-Vafa part of the potential is given by 
\[
	W^{HV}_{\Sigma} = z_{1} + z_{2} + \frac{T^{b}}{z_{2}} + \frac{T^{a}}{z_{1}z_{2}^{k}} + T^{-c}z_{1}z_{2}.
\]
First note that if we keep our assumption $c < \frac{b}{2}$ from \eqref{condi:blowup-size}, then the three terms $z_{1}$, $z_{2}$, and $T^{-c}z_{1}z_{2}$ gives rise to a new toric critical point $\alpha$ with $\val (\alpha) = (c,c)$. We observe the change in the critical behavior of $W_{\Sigma}$  and that of the bulk deformed potential $W^{\mathfrak{b}}$ when $c = \frac{b}{2}$.

Let $c = \frac{b}{2}$. We follow the steps of the proof of Proposition \ref{prop:gnc} analogously. Normalizing with respect to $z_{2}=T^{b/2}\underline{z_{2}}$, we have 
\[
	W^{HV}_{\Sigma} = z_{1} + T^{b/2}\Big( \underline{z_{2}} + \frac{1}{\underline{z_{2}}} \Big) + T^{a-\frac{k}{2}b}\frac{1}{z_{1}\underline{z_{2}}^{k}} + T^{-c+\frac{b}{2}}z_{1}\underline{z_{2}},
\]
and substituting $\underline{z_{2}}=-1+z_{2}^{+}$ gives
\begin{align*}
W^{HV}_{\Sigma} &= z_{1} + T^{\frac{b}{2}}\Big( -({z_{2}}^{+})^{2} - {(z_{2}^{+})}^{3} -  \cdots \Big) \pm  
				\frac{T^{a-\frac{k}{2}b}}{z_{1}}\Big(1+z_{2}^{+}+{(z_{2}^{+})}^{2}+\cdots\Big)^{k} + z_{1}(-1+z_{2}^{+})  \\
 &= - T^{\frac{b}{2}}{(z_{2}^{+})}^{2}  \pm  \frac{T^{a-\frac{k}{2}b}}{z_{1}} + z_{1}z_{2}^{+}  + \cdots.
\end{align*}
where the two terms $z_{1}$ and $-T^{-c+\frac{b}{2}}z_{1}$ have canceled out one another. The leading terms become convenient, hence by Proposition \ref{prop:tropcrit} (after normalizing), we conclude that there are three  critical points located at $Q = (\frac{1}{3}(a-\frac{k}{2}b + \frac{b}{2}), \frac{b}{2}) \in e$, and two critical points at $ P =(\frac{1}{2}a-\frac{k}{4}b, \frac{1}{2}b)$. See Figure \ref{fig:continuum2}.\\

%-------------------------------------------------------------------------------------------------------------------------------------------------------%
Now consider (the $z_{2}^{+}$ expansion of) the bulk-deformed potential:
\[
	 W^{\mathfrak{b}} = - T^{\frac{b}{2}}{(z_{2}^{+})}^{2}  \pm  \frac{T^{a-\frac{k}{2}b}}{z_{1}} - z_{1}z_{2}^{+} + T^{\epsilon}z_{1} + \cdots
\]
Notice that $T^{ \epsilon}z_{1}$ can now become an energy minimizer. 
%We subdivide the current situation into two cases.
If $0<\epsilon<\frac{1}{3}(a-\frac{k}{2}b -b )$, one can show by following the steps of the proof of Proposition \ref{prop:gnc}, that $T^{ \epsilon}z_{1}$ indeed serves as an energy minimizer\footnote{To be precise, one needs to expand the potential to the third order so that the coefficients satisfies the conditions in Lemma \ref{lemma:appendix}.}. We obtain two critical points that are located at
\[
	\Big(	\frac{1}{2}(a-\frac{k}{2}b -\epsilon), \, \frac{1}{4}(a-\frac{k}{2}b - b + \epsilon)	\Big),
\]
and a single critical point at
\[
	(\epsilon + \frac{b}{2} , \epsilon)
\]
as shown in Figure \ref{fig:continuum3}. The former two critical points' change in position (as $\epsilon$ varies) is depicted in a blue arrow, whereas the latter is depicted in a red arrow. 
%Notice that the location of these three critical points depend on the bulk parameter $\epsilon$. 
We omit the details.

\section{Homological Mirror Symmetry}\label{sec:HMS}

We now discuss open-string (homological) mirror symmetry within our geometric context. For a surface $X$ obtained by taking a non-toric blowup of a toric surface $X_\Sigma$, let us consider the Fukaya category $\mathcal{F}_{0} (X)$ of $X$ consisting the fibers of the SYZ fibration on $X$. More precisely, these Lagrangian torus fibers are equipped with $\Lambda_U$ flat connections, and their Floer cohomologies are well-defined as long as their curvature $m_0(1)$'s coincide (recall they are all unobstructed by Lemma \ref{lem:torusweakunobs}).
We allow $\mathcal{F}_{0}$ to contain nodal fibers with weak-bounding cochains from immersed generators if $X$ is semi-Fano, in which case we can guarantee their weakly-unobstructedness (see Remark \ref{rmk:nodalweakunobs}).

Nontrivial objects in $\mathcal{F}_0$ can be detected from singularity information of $W$. Recall from \ref{subsec:LFT} that each point in the domain $\check{Y}$ of the mirror LG model $W:\check{Y} \to \Lambda$ represents a Lagrangian brane supported over a SYZ fiber. An object of $\mathcal{F}_0 (X)$ corresponding to a point $p \in \check{Y}$ is nontrivial if and only if $p$ is a critical point of $W$ (see \ref{subsec:critofpot} below). 
On the other hand, we consider the category of singularities for $W$ on the mirror side. Assuming generic parameters, all of its (geometric) critical points are non-degenerate, and hence the category decomposes into skyscraper sheaves supported at critical points of $W$, where the morphism space between any two different skyscraper sheaves is trivial.

\begin{remark}
Although we focus on $W$ to keep the exposition simple, the entire argument in this section has a straightforward generalization to the bulk-deformed potential $W^\mathfrak{b}$. 
%In this case, the Fukaya category $\mathcal{F}_0^\mathfrak{b} (X)$ has the same objects as before, but its $A_\infty$-structure is bulk-deformed by $\mathfrak{b}$, and the derived category of singularities for $W^{\mathfrak{b}}$ should be considered on the mirror side.
\end{remark}

We begin by finding nontrivial objects (generators) of $\mathcal{F}_0 (X)$ associated with critical points of $W$.

\subsection{Critical points of $W$ and the Floer cohomology of the associated Lagrangian}\label{subsec:critofpot}

Suppose $p=(z_{1},z_{2})$ is a critical point of $W$. By taking the (unique) factorization $p=(T^{\val(z_{1})} \underline{z}_{1},T^{\val(z_{2})} \underline{z}_{2})$ for $(\underline{z}_{1},\underline{z}_{2}) \in \Lambda_U^2$, we can assign a Lagrangian torus $L_u$ fiber sitting over $u:=(\val(z_{1}),\val(z_{2}))$ equipped with the bounding cochain $b = x_{1} d \theta_{1} + x_{2} d \theta_{2} \in H^1 (L_u;\Lambda_+) \cong (\Lambda_+)^2$ such that $e^{b} =( \underline{z}_{1},\underline{z}_{2})$. In view of \eqref{eqn:resttofiberwiseW}, $p$ being a critical point is equivalent to $b$ being a critical point of $W_u (b)$, where
\begin{equation}\label{eqn:weakmcb1}
W_u (b) \cdot [L_u] = m_0 (1) + m_{1}(b) + m_{2}(b,b) + \cdots .
\end{equation}

Differentiating \eqref{eqn:weakmcb1}, we see that the differential $m_{1}^{b,b}$ on $\hom ((L_u,b),(L_u,b)) \cong H^\ast (L_u,\Lambda)$ vanishes on $H^1 (L_u,\Lambda)$ as it acts trivially on the generators $d\theta_{1},d\theta_{2}$. Moreover, differentiating \eqref{eqn:weakmcb1} once more, we see that $m_{2}^{b,b,b}$ on these generators satisfies
\[
	m_{2}^{b,b,b} (d \theta_{1}, d \theta_{2}) + m_{2}^{b,b,b} (d \theta_{2}, d \theta_{1}) = \partial_{12} W \cdot [L_u].
\]
Notice that $d \theta_{1} \wedge d \theta_{2}$ cannot be an image of $m_{1}^{b,b}$ which vanishes on $d \theta_{1}$ and $d \theta_{2}$, and $m_{2}^{b,b,b} (d \theta_{1}, d \theta_{2}) = d \theta_{1} \wedge d \theta_{2} + c \cdot [L_u]$ for some $c \in \Lambda$ where $[L_u]$ serves as the unit. Consequently, the Lagrangian brane $(L_u, b)$, considered as an object of $\mathcal{F}_0 (X)$, has its endomorphism algebra quasi-isomorphic to the Clifford algebra with respect to the quadratic form $Hess_b (W_u)$ (the Hessian of $W$ at $b$), which equals $Hess_p (W)$. Observe that this calculation is still valid even if there exist nontrivial contributions of stable disks with negative Maslov indices, and we use crucially the fact that $\dim L_u$ is $2$. Thus we have the following:

\begin{prop}\label{prop:hffibers} For an object $L(p):=(L_{(\underline{z}_{1},\underline{z}_{2})}, b=(x_{1},x_{2}))$ corresponding to the critical point $p = (\underline{z}_{1} e^{x_{1}}, \underline{z}_{2} e^{x_{2}}) $ in $X$, its endomorphism space in $\mathcal{F}_0 (X)$ is quasi-isomorphic to the Clifford algebra with respect to $Hess_p (W)$.

Moreover, for two distinct critical points $p_{1}$ and $p_{2}$ of $W$, $\hom_{\mathcal{F}_0(X)} (L(p_{1}),L(p_{2}))$ is trivial.
\end{prop}

\begin{proof}
It only remains to prove the last assertion, so consider $p_{1}\neq p_{2}$ with $\val(p_{1}) = \val(p_{2})$ and $W(p_{1}) = W(p_{2})$ (as otherwise, $\hom_{\mathcal{F}_0(X)} (L(p_{1}),L(p_{2}))$ would be automatically trivial, or not even defined). The situation boils down to computing the cohomology of 
\[
	m_{1}^{b_{1},b_{2}}:CF( (L_u,b_{1}), (L_u,b_{2})) \to CF( (L_u,b_{1}), (L_u,b_{2}))
\]
for two different weak bounding cochains $b_{1}$ and $b_{2}$ in $H^1 (L_u;\Lambda_+)$ such that $W_u (b_{1}) = W_u (b_{2})$. We use the Morse-Bott model for $CF( (L_u,b_{1}), (L_u,b_{2})$, in that it is still generated by $[L_u]$ and $d \theta_{1} \wedge d \theta_{2}$ in even degree, and $d\theta_{1}, d\theta_{2}$ in odd degree.

Observe first that $m_{1}^{b_{1},b_{2}} ([L_u])= b_{1} - b_{2} \neq 0$. Therefore, modulo the image, the odd degree part of $HF( (L_u,b_{1}), (L_u,b_{2})$ is generated by one of $d \theta_i$, say $d \theta_{1}$, or it vanishes. However, due to the classical part (the de Rham differential) of $m_{1}^{b_{1},b_{2}}$, 
\begin{equation}\label{eqn:m1d1}
 m_{1}^{b_{1},b_{2}} (d \theta_{1}) = C d \theta_{1} \wedge d \theta_{2} + E [L_u] \neq 0
\end{equation}
for some constants $C$ and $E$ with $C \neq 0$ ($C$ would cancel out if $b_{1}= b_{2}$), which gives a contradiction. 

On the other hand, \eqref{eqn:m1d1} implies that the even degree part of $HF( (L_u,b_{1}), (L_u,b_{2})$ should be generated by $[L_u]$ unless it is trivial. Since $[L_u]$ is not a cycle, the even degree component should also vanish.
\end{proof}

%equipped with the flat line bundle with holonomy given by $\underline{x}$ and $\underline{y}$ along two independent directions in $L$

\subsection{Nontrivial objects in $\mathcal{F}_0(X)$ from non-toric critical points}\label{subsec:critval}

We look into the nontrivial object of $\mathcal{F}_0 (X)$ arising from the blowup process to see how the category changes under the blowup. Recall that each blowup (at a non-torus-fixed point) introduces a critical point to $W$ which we call a non-toric critical point. We are interested in the associated Lagrangian torus fiber (coupled with a suitable bounding cochain), especially in its behavior when the position of a blowup point changes.

We first find the precise location of such a critical point on the SYZ base (i.e., the valuation of a non-toric critical point). We also show that critical values for non-toric critical points of $W$ are mutually distinct. Clearly, the leading order terms $\overline{W}$ of $W$ only matter for our purpose.

As before, we look at the local expression of $W$ around exceptional divisors arising from blowing up at points in $D_{\Sigma,2}$ (and hence, near $D_{\Sigma,1} \cap D_{\Sigma,2}$ by our earlier choice) in $X_\Sigma$. Without loss of generality, we assume $\nu_{1}=(1,0)$, $\nu_{2}=(0,1)$, and $\lambda_{1}=\lambda_{2}=0$. In  the moment polytope $\Delta_\Sigma$, critical points of our interest lie on the $x$-axis, close to the origin. 
We stick to this setting throughout \ref{subsec:critval} and \ref{subsec:geomnon-toric}.

Suppose that there are $k$ distinct such blowup points in $D_{\Sigma,2}$, and that the sizes of exceptional divisors are $\epsilon_{1},\cdots,\epsilon_k$ in increasing order. The leading terms $\overline{W}$ of $W$ locally near the origin of $\Delta_\Sigma$ are given as in \eqref{stdlocalmodel}. We have seen in \ref{subsec:non-toriclocal} that there are $k$-many critical points of $\overline{W}$ all of which extend to critical points of $W$ via energy induction. Let  $\alpha_{1},\cdots, \alpha_k$ denote these critical points (of $W$) where $\alpha_i$ corresponds to the exceptional divisor of size $\epsilon_i$.

Alternatively, we can express $\overline{W}$ as
\[
	\overline{W} = z_{2} + z_{1} \prod_{i=1}^{k}{\left(1+T^{-\epsilon_{i}}z_{2}\right)}. 
\]
Taking the partial derivative with respect to $z_{2}$, we obtain
\[
	\frac{\partial}{\partial z_{2}}\overline{W} = 1 + z_{1} \sum_{i=1}^{k} T^{-\epsilon_{i}}\prod_{j\neq i}{\left(1+ T^{-\epsilon_{j}}z_{2}\right)}.
\]
Substituting $z_{2}=-T^{\epsilon_{i}}$ and solving for $z_{1}$, we have 
\[
z_{1} = - T^{\epsilon_{i}} \prod_{j\neq i} \frac{1}{1-T^{\epsilon_{i}-\epsilon_{j}}}.
\]
We can expand each term using Taylor series. If $j < i$, then $\epsilon_{i}-\epsilon_{j} <0$, and we have 
\[
	\frac{1}{1-T^{\epsilon_{i}-\epsilon_{j}}} = -\frac{T^{\epsilon_{j}-\epsilon_{i}}}{1-T^{\epsilon_{j}-\epsilon_{i}}} = -T^{\epsilon_{j}-\epsilon_{i}}\left(1 + T^{\epsilon_{j}-\epsilon_{i}} + T^{2(\epsilon_{j}-\epsilon_{i})}+\cdots \right).
\]
%If $j>i$, 
%\[
%	\frac{1}{1-T^{\epsilon_{i}-\epsilon_{j}}} = 1 + T^{\epsilon_{i}-\epsilon_{j}} + T^{2(\epsilon_{i}-\epsilon_{j})} + \cdots
%\]
Otherwise, $\frac{1}{1-T^{\epsilon_i - \epsilon_j}}$ is of valuation $0$.
Therefore,
\[
	z_{1} = (-1)^{k}T^{\epsilon_{i}}T^{\sum_{j<i}\epsilon_{j}-\epsilon_{i}} + T^{\text{h.o.t.}}
\]
and we thus have
\[
	\val (\alpha_{i}) = \left(\epsilon_{i}+\sum_{j=1}^{i-1}(\epsilon_{j}-\epsilon_{i}), \,\, \epsilon_{i}\right).
\]
Due to our choice of $\lambda_{1}=\lambda_{2}=0$, the moment polytope $\Delta_\Sigma$ is located in the 1st quadrant of $\mathbb{R}^2$, and our local calculation is performed near the origin. In particular, $\alpha_i$ sits in the interior of $\Delta_\Sigma$. Moreover, by choosing the locations of the blowup points sufficiently close to $D_{\Sigma,1} \cap D_{\Sigma,2}$ (which are irrelevant to $\epsilon_i$), we can ensure that all $\alpha_{1},\cdots, \alpha_k$ lie within $R_0$ where the local expansion $W = \overline{W} + T^{h.o.t.}$ is valid (see Figure \ref{fig:relposofblowup}). Therefore we obtain nontrivial objects $L(\alpha_{1}),\cdots, L(\alpha_k)$ of $\mathcal{F}_0 (X)$ supported over SYZ fibers at $\val(\alpha_{1}),\cdots,\val(\alpha_k)$.

Now, let's proceed to calculate the valuation of the critical value of $W$ at $\alpha_{i}$:
\begin{align*}
\val ( W\rvert_{\alpha_{i}}) &= \val \left( T^{\epsilon_{i}} + \left(T^{\epsilon_{i}+\sum_{j=1}^{i-1}(\epsilon_{j}-\epsilon_{i})}\right)\prod_{i=1}^{k}{\left(1 + \frac{1}{T^{\epsilon_{i}}}T^{\epsilon_{i}}\right)}\right) \\
 &= \mathrm{min}\{ \epsilon_{i}, \, \epsilon_{i}+\sum_{j=1}^{i-1}(\epsilon_{j}-\epsilon_{i})\} = \epsilon_{i}.
\end{align*}
Therefore, as long as $\epsilon_{i}$'s are generic, the critical values (of non-toric critical points) are all distinct. 

Moreover, Lemma \ref{lem:unimodal1} tells us that generically, toric critical points also have distinct eigenvalues. For the case where the central cell is non-convenient, one can check by direct computation that the critical values are indeed distinct for each critical point \eqref{nonconvcrit} found in the proof of \ref{prop:gnc}. The following is a direct consequence of the discussion so far combined with Proposition \ref{prop:hffibers}.

\begin{prop}\label{prop:nontrivfibers}
Let $X$ be obtained after a sequence of toric/non-toric blowups on $X_{\Sigma}$, where we assume that the non-toric blowup centers are close enough to the corner of the moment polytope.\footnote{i.e. blowup points in the interior of the toric divisor $D_{j}$ are close enough to $D_{\Sigma,j} \cap D_{\Sigma,j-1}$ for each $j$ according to our earlier convention.} Then each toric/non-toric blowup gives rise to a Floer-nontrivial SYZ fiber in $X$, and hence a nontrivial object in $\mathcal{F}_0 (X)$. Its endomorphism algebra is isomorphic to the Clifford algebra with respect to the Hessian of $W$ at its corresponding critical point.

For generic parameter, all such Floer-nontrivial SYZ fibers live in mutually distinct component of $\mathcal{F}_0 (X)$ when decomposed into potential values, and hence they do not interact with each other.
\end{prop}

When $k=1$, then the unique critical point created by this blowup is located at $(\epsilon,\epsilon)$ (for $\epsilon=\epsilon_{1}$ in the notation above). This precisely coincides with the effect of the toric blowup at $D_{\Sigma,1} \cap D_{\Sigma,2}$ with the size of the exceptional divisor $\epsilon$. Indeed, the two (toric and non-toric ones) are equivalent in view of symplectomorphism constructed in the proof of Lemma \ref{lem:torusweakunobs}. Apart from technical details (in achieving transversality for Floer theory), the associated Lagrangian should coincide with what is called the exceptional brane in \cite{VXW} (which was constructed in the local model of the toric blowup).

%--------------------------------------------------------------------------------------------------------------------------------------------------------%
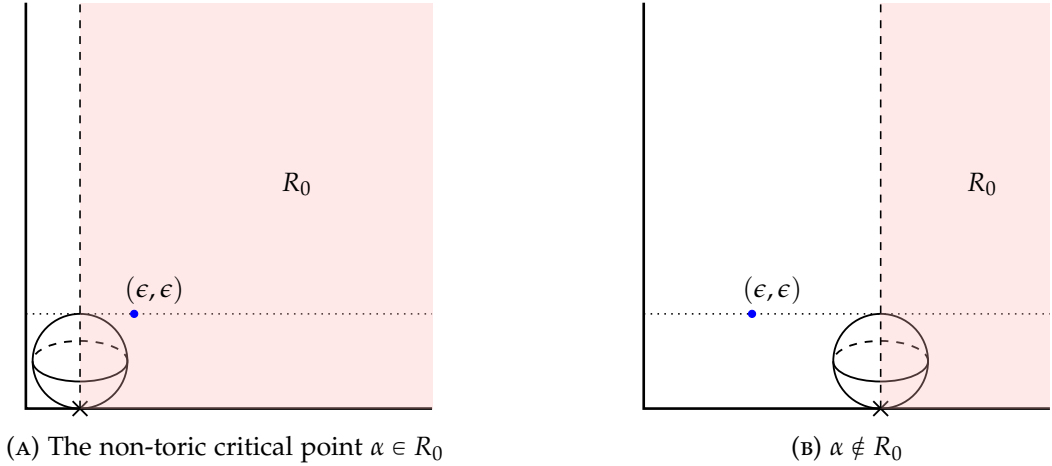
\begin{figure}[h]
\centering
%=================================%
%=================================%
\subcaptionbox{%
	The non-toric critical point $\alpha \in R_0$
	\label{fig:R_0in}%
	}
	{\makebox[0.45\linewidth][c]
		{\begin{tikzpicture}[scale=0.9]

\draw[line width=1pt] (0,6)--(0,0)--(6,0);
\draw[line width=0.6pt, dotted] (0,1.4)--(6,1.4);

% Sphere
\draw[line width=0.7pt] (0.8,0.7) circle (0.7);
\draw[line width=0.7pt] (0.8,0.7) +(0:0.7 and 0.3) arc (0:-180:0.7 and 0.3);
\draw[line width=0.7pt, dashed] (0.8,0.7) +(0:0.7 and 0.3) arc (0:180:0.7 and 0.3);
\draw[line width =0.7pt] (0.8,0) node[cross] {};

\fill[red!30, draw=none, opacity=0.3] (0.8,6)--(0.8,0)--(6,0)--(6,6);
\draw (4,3) node[above] {\small{$R_{0}$}};
\filldraw[blue] (1.6,1.4) circle (1.5pt);
\draw (1.9,1.4) node[above] {\small{$(\epsilon, \epsilon)$}};
\draw[line width=0.6pt, dashed] (0.8,0)--(0.8,6);

		\end{tikzpicture}%
	}
	}\hfill%
%=================================%
%=================================%
\subcaptionbox{%
	$\alpha \notin R_0$
	\label{fig:R_0out}%
	}
	{\makebox[0.45\linewidth][c]	
		{\begin{tikzpicture}[scale=0.9]

\draw[line width=1pt] (0,6)--(0,0)--(6,0);
\draw[line width=0.6pt, dotted] (0,1.4)--(6,1.4);

\draw[line width=0.7pt] (3.5,0.7) circle (0.7);
\draw[line width=0.7pt] (3.5,0.7) +(0:0.7 and 0.3) arc (0:-180:0.7 and 0.3);
\draw[line width=0.7pt, dashed] (3.5,0.7) +(0:0.7 and 0.3) arc (0:180:0.7 and 0.3);
\draw[line width =0.7pt] (3.5,0) node[cross] {};

\fill[red!30, draw=none, opacity=0.3] (3.5,6)--(3.5,0)--(6,0)--(6,6);
\draw (5,3) node[above] {\small{$R_{0}$}};
\filldraw[blue] (1.6,1.4) circle (1.5pt);
\draw (1.9,1.4) node[above] {\small{$(\epsilon, \epsilon)$}};
\draw[line width=0.6pt, dashed] (3.5,0)--(3.5,6);

		\end{tikzpicture}
		}
	}%
%=================================%
%=================================%
\caption{Local picture of the relative position of the blowup center with respect to $\epsilon$ and corresponding locations of the non-toric critical point.}
\label{fig:relposofblowup}
\end{figure}%--------------------------------------------------------------------------------------------------------------------------------------------------------%

\subsection{Geometric meaning of critical points lying outside $R_0$}\label{subsec:geomnon-toric}
Finally, we can remove the assumption on the location of the blowup center in Proposition \ref{prop:nontrivfibers} by including nodal fibers under the semi-Fano assumption on $X$. 
It is natural to expect that the singularity information of its LG mirror is independent of positions of the blowup center in any case, but the only reason we impose this assumption is to ensure the weakly unobstructedness of nodal fibers (the argument in the proof of Lemma \ref{lem:torusweakunobs} is obviously not valid for these fibers). We speculate that nodal fibers are weakly unobstrcuted for all bounding cochains formed by immersed generators beyond the semi-Fano case.

For the sake of simplicity, we assume $k=1$, and the argument easily generalizes for $k>1$. We work with the same setting as above, but now we allow the blowup points to be projected onto $(\delta,0)$ in the moment polytope, where $\delta$ is greater than the blowup size $\epsilon$. The situation is described in Figure \ref{fig:R_0out}, whereas previously we worked with the setting in Figure \ref{fig:R_0in} (cf. Proposition \ref{prop:nontrivfibers}).

Notice that for this choice of the blowup point, the critical point $(\epsilon,\epsilon)$ falls in the chamber  $R_{1}$ on the left side of $R_0$ sharing a single vertical wall (with respect to our choice of coordinates). On the other hand, the leading terms for $W_{R_{1}}$ near the origin is given as 
\[
	\overline{W} = z_1+ z_2,
\]
which does not admit any critical points (while being convenient). This seemingly contradictory phenomenon occurs because analytic-continuation of $W_{R_0}$ over $R_{1}$ cannot cover a certain region in $R_{1}$ due to singularity of the coordinate change (a cluster transformation) between $R_0$ and $R_{1}$ (see Remark \ref{rmk:failac}). 

More concretely, let $(z_{1}',z_{2}')$ and $(z_{1},z_{2})$ denote mirror affine coordinates over $R_{1}$ and $R_0$, respectively, induced by the disk classes $\nu_{1}$ and $\nu_{2}$. 
Recall from \eqref{eqn:wcformualepsilon} that they are related by
\begin{equation*}
\begin{array}{l}
 z_{1} = z_{1}' (1+ T^{-\epsilon} z_{2}') \\
 z_{2} = z_{2}'
 \end{array}
 \end{equation*}
in an infinitesimal neighborhood of the wall between $R_0$ and $R_{1}$. The map gives a diffeomorphism away from $\{z_{2}' = -T^{\epsilon}\}$. Besides, if $z_{2}' = -T^{\epsilon} + T^{h.o.t.}$, then the transition can change the valuations of points dramatically, and hence the continuation of $W_{R_0}$ in \ref{subsubsec:globalizecoord} cannot cover the region over $R_{1}$ given by $\{z_{2}' \in T^{\epsilon} ( -1 + \Lambda_+)\}$.
Notice that the critical point $\alpha_0=(-T^{\epsilon}, -T^{\epsilon})$ of $\overline{W}$ and hence $\alpha = (-T^{\epsilon} + T^{h.o.t.} , -T^{\epsilon} + T^{h.o.t.})$ of $W$ precisely lie on this region. For this reason, it is impossible to directly identify the geometric object associated with $\alpha$.

To remedy, we bring in an additional mirror chart induced by the (Floer) deformation space of the nodal fiber in $X$ responsible for the wall between $R_0$ and $R_1$. Such a technique of extending of the mirror space has first appeared in \cite{HKL}. See \cite[7.1]{BCHL} for its usage in a similar context.
Recall that near the exceptional divisor, the SYZ fibration on $X$ is locally modeled on the one given in \cite{auroux09}. 
The nodal fiber $\mathbb{L}$ in our situation is an immersion of a sphere that has a single transversal self-intersection. It produces two degree 1 immersed generators, say  $U$ and $V$. One can deform the Fukaya algebra $CF(\mathbb{L},\mathbb{L})$ by the weak bounding cochain $b= u U + v V$ for $(u,v) \in (\Lambda_+)^2$. 

The coordinate transitions between this new $(u,v)$-chart and $R_i$ are given as follows.
Consider regular fibers $L_{0}$ and $L_{1}$ over $R_0$ and $R_1$, respectively and assume that they are close enough to $\mathbb{L}$. Their projections to the divisor $D_2$ is depicted in Figure \ref{fig:coordchloc}, where the origin represents $D_1 \cap D_2$.
%For simplicity, choose the mirror coordinates $(z_{1},z_{2})$ away from a neighborhood of a branch-cut so that $z_{1} = z^{\partial \beta_0}$ where $\beta_0 \in H_{2} (X,L_u)$ is the class of Maslov 0 disks emanating from $\mathbb{L}$.
It will be convenient to introduce a local model for a neighborhood of the fiber $\mathbb{L}$, which is $\mathbb{C}^2 \setminus \{xy = 1\}$ equipped with a Lagrangian torus fibration given by $(|x|^2 - |y|^2, |xy - 1|)$. It models a neighborhood of $D_2 \setminus D_1 \cap D_2$ where (a part of) $D_1$ is given as $\{xy = 1\}$. Notice that the conic fibration $(x,y) \mapsto xy-1=z\in \mathbb{C}^\times$ can be interpreted as the reduction of $\mathbb{C}^2 \setminus \{xy = 1\}$ by the circle action $\rho \mapsto (\rho x, \rho^{-1} y)$. It is nothing but the pull-back of the toric action generated by $\nu_2 \in \Sigma$.

The projections of $L_i$ and $\mathbb{L}$ to the divisor $D_2$ are depicted in Figure \ref{fig:coordchloc}. See \cite[4.2]{BCHL} for more details (Figure 7 therein will be helpful to understand the picture).
In fact, all fibers are sitting over the concentric circles in the $z:=xy-1$-plane centered at $0$, and $\mathbb{L}$ corresponds to the fiber $\{|x|^2 - |y|^2 = 0, |xy - 1| = 1\}$ projecting to $|z|=1$. We may choose $L_i$ to be $L_{0}:=\{|x|^2 - |y|^2 = 0, |z|=1+\epsilon \}$ and $L_{1}:=\{|x|^2 - |y|^2 = 0, |z|=1-\epsilon \}$ for some small $\epsilon$.

%-------------------------------------------------------------------------------------------------------------------------------------------------------%
\begin{figure}[h]
	\begin{center}
		\includegraphics[scale=0.42]{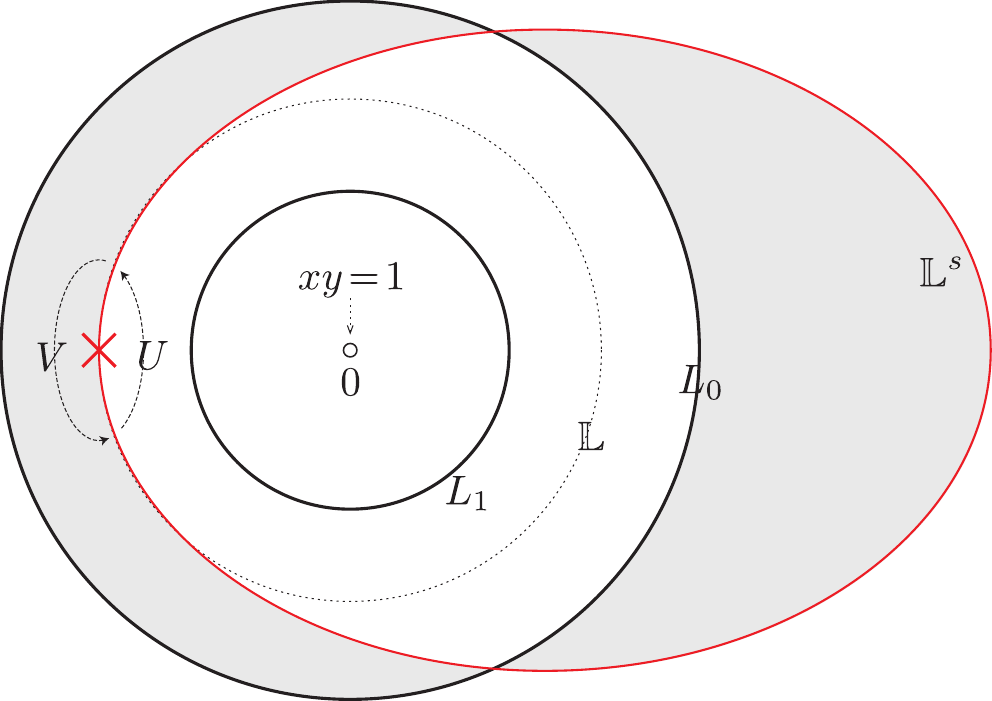}
		\caption{The projections of SYZ fibers near a nodal fiber}
		\label{fig:coordchloc}
	\end{center}
\end{figure}
%-------------------------------------------------------------------------------------------------------------------------------------------------------%

Let $\mathbb{L}^{s}$ be a perturbation of $\mathbb{L}$ obtained by slightly modifying the base circle $|z| = 1$ 
%to some $\{z=\gamma(t)\}$ 
as in Figure \ref{fig:coordchloc}. We want to compare two Lagrangians $(\mathbb{L}, uU + vV)$ and $(L_0, \nabla^{\uz_{1},\uz_{2}})$ each coupled with a suitable bounding cochain. More precisely, the latter is equipped with the flat connection $\nabla^{(\uz_{1},\uz_{2})}$ for $(\uz_{1},\uz_{2}) \in \Lambda_U^2$. In our picture, the holonomy along the direction of the circle action is set to be $\uz_{2}$ and that around a fixed choice of a horizontal lift of $\{z=1-\epsilon \}$ is $\uz_{1}$.
 %For the latter, one can trivialize the fibration over $|z|=1-\epsilon$ by $(x,y) \mapsto (x,xy-1)$, and the associated horizontal lift is the class represented by the circle $(1,(1-\epsilon) e^{i\theta} +1 $. For $L_{1}$, there is not much ambiguity for this choice since the monodromy of the conic fibration along $|z|=1-\epsilon$ is trivial.
%One can also consider the analogously defined object $(L_{1},\nabla^{(\uz_{1}',\uz_{2}')})$, though there is some delicacy of choosing the class in $H_{1}(L_{2})$ for $\uz_{2}'$ due to the nontrivial monodromy of the conic fibration. See \cite{HKL} for more details.
It was shown in \cite{HKL}  that $(\mathbb{L}, uU + vV)$ and $(L_{1},\nabla^{(\uz_{1},\uz_{2})})$ are isomorphic to each other (as objects of the Fukaya category) if and only if
\begin{equation}\label{eqn:blcoord}
T^{\delta(s)} \uz_{1} = v, \quad \uz_{2} = uv-1
\end{equation}
where $\delta(s) >0$ is the area difference between two shaded strips in Figure \ref{fig:coordchloc}. It is positive due to our particular choice of perturbation $\mathbb{L}^s$. We have some flexibility on $\delta$ by adjusting the size of the base circle for $\mathbb{L}^s $.

%
%
%We have seen in \ref{} that there exists an immersed Lagrangian $\mathbb{L}$ (a small deformation of the nodal fiber) in this local model whose Maurer-Cartan deformation space $\Lambda^2_{(u,v)}$ is overlapped with $\check{\varphi}^{-1}(R_0) (\subset \check{Y})$ via the coordinate change
%\begin{equation}\label{eqn:blcoord}
%\uz_{2} = uv-1,\quad T^{\delta} \uz_{1} = v
%\end{equation}
%where we write $(z_{1},z_{2}) = (\val(z_{1}) \uz_{1}, \val(z_{2}) \uz_{2})$ with $(\uz_{1},\uz_{2}) \in \Lambda_U^2)$ as before. Here, $\delta$ is the area difference between two shaded strips in Figure \ref{fig:coordchloc}, and we have some flexibility on this $\delta$ by adjusting the size of the base circle for $\mathbb{L}$.
 
For the critical point $\alpha = (z_{1},z_{2})$, we have $\uz_{2} = -1+ T^{h.o.t.}$, and \eqref{eqn:blcoord} can be solved to get $(u,v) \in \Lambda_+^2$. Since $\alpha_0$ is certainly not a critical point of the global potential of $W$, $\alpha$ carries nontrivial higher order terms and, $u,v \neq 0$ for such $\alpha$. Thus $(u,v)$ lies in the region where the coordinate change \eqref{eqn:blcoord} is valid. 
The Floer potential for $\mathbb{L}$ can be obtained by applying this coordinate change to $W$ over $R_0$, and from the discussion so far, it admits a critical point $(u,v)$ which transfers to $\alpha$ under \eqref{eqn:blcoord}. This gives us a nontrivial object in $\mathcal{F}_0 (X)$ whose underlying Lagrangian in $\mathbb{L}$, although $\alpha$ itself does not directly represent a geometric object. 

Finally we remark that in the earlier situation where $R_0$ contains all the critical points (Figure \ref{fig:R_0in}), the above process does not create any new critical points. In fact, one can check by direct order-by-order calculation similar to the proof of Proposition \ref{prop:gnc} that the Floer potential (for $\mathbb{L}$) does not admit a critical point over its associated chart $(\Lambda_+)^2$ if the location of $\mathbb{L}$ is too close to the corner.

\subsection{Equivalence of categories}

From the discussion so far, $\mathcal{F}_0 (X)$ is generated by SYZ fibers $L(p)$ equipped with suitable bounding cochains corresponding to the critical points $p$ of $W$. For a regular point $p$ of $W$, $L(p)$ gives a trivial object since the unit class in $\hom(L(p),L(p))$ is an image of the differential, and hence is zero in the cohomology.
Thus the triangulated category $D^b \mathcal{F}_0 (X)$ is essentially the derived category of modules over the direct sum of as many Clifford algebras as the number of critical points of $W$ (associated with the Hessian of $W$ at those critical points).
%
%Morphism spaces are given as:
%\begin{itemize}
%\item
%The endomorphism algebra of $L(p)$ is the Clifford algebra with respect to $Hess_p (W)$. 
%\item 
%For two different critical points $p_{1}$ and $p_{2}$, the morphism space between $L(p_{1})$ and $L(p_{2})$ vanishes as shown in Lemma \ref{} below. 
%\end{itemize}

\begin{remark}
By \cite{auroux07}, the computation of critical values of $W$ (for non-toric critical points) tells us that the $\epsilon_i$-eigenspace $QH(X)_{\epsilon_i}$ of $QH(X)$ with respect $c_{1}(X) \star -$ is $1$-dimensional. Assuming the extension of \cite[Corollary 1.12]{She} to non-monotone setting, this would imply that the the nontrivial object $L (\alpha_i)$ generates the corresponding component of the genuine Fukaya category. 
\end{remark}

On the mirror side, we have a decomposition of the singularity category of $W$ into critical values of $W$, that is $\oplus_\lambda D^{b}_{sing} (W^{-1} (\lambda))$. Equivalently, one may consider the matrix factorizations of $W-\lambda$. Clearly each factor $D^{b}_{sing} (W^{-1} (\lambda))$ is generated by skyscraper sheaves at critical points of $W$ whose corresponding critical values are $\lambda$.
Let us consider one of such skyscraper sheaves $\Bbbk(p)$ in $\mathcal{D}^{b}_{sing} (W^{-1} (\lambda))$. (More precisely, it is the image of $\mathcal{O}_p$ in the quotient category $\mathcal{D}^{b}_{sing} (W^{-1}(\lambda_p))$ by perfect complexes.) It is well-known that the endomorphism of $\Bbbk(p)$ is quasi-isomorphic to the Clifford algebra associated with the Hessian of $W$ at the corresponding point. See for e.g., \cite{Dy} or \cite{Tel}. Thus we proved:

\begin{thm}\label{thm:HMS}
There is an equivalence between $D^b \mathcal{F}_0 (X)$ and $\oplus_\lambda D^{b}_{sing} (W^{-1} (\lambda)) (\cong MF(W))$. Both categories admit orthogonal decomposition with respect to critical (potential) values $\lambda$, and each summand in the decomposition is generated by the skyscraper sheaf at the unique critical point whose value is $\lambda$ or its corresponding SYZ fiber.
\end{thm}

%
%
%
%\section{Further directions}
% 
%-Almost toric fibration 
%
%-blow-down
%
%-Higher dimension
%
%-Orbifold
%
%==================
%
%-Convenience issue - \textit{intersection of tropical curves?}
%
%-The potential has finitely many terms?
%
%-Lagrangian spheres (this happens when $\epsilon_{1}= \epsilon_{2}$?\\
%
%
%

\appendix

\section{Estimates for the second-order expansion of W}\label{APPENDIX}

We prove that $(\partial_{x_+} W, \partial_{y_+} W) =0$ has a unique solution by considering in a more abstract setup as follows. $F$ below generalizes $(\partial_{x_+} W, \partial_{y_+} W) $ minus the constant terms. Readers are warned that $a,b,c,d$ in the statement represent some general coefficients, and are irrelevant to the symplectic sizes of toric divisors in $\mathbb{F}_k$ in Proposition \ref{prop:gnc}.
%({\color{red} Change notation of coefficients or mention that they are different.})\\

\begin{lemma}\label{lemma:appendix}
Let $F: (\Lambda_+)^2 \to (\Lambda_+)^2$ is given in the form of
\[
	F(z_1,z_2) = \left(a z_{1} + b z_{2} +  \sum_{v \neq e_{1},e_{2}} \lambda_v z^v, c z_{1} + d z_{2} +\sum_{v \neq e_{1},e_{2}} \eta_v z^v \right)
\]
for generic coefficients $(a,b,c,d) \in \Lambda_+^4$, where the terms $\lambda_v z^v$, $\eta_v z^v$ are higher order terms. We assume the following :
\begin{enumerate}
 \item $ad - bc \neq 0$, 
  \item $\val(a) \leq \val(b), \val(c)$ and $\val(d) \leq \val(b), \val(c)$ ($a$ and $d$ are  not assumed to be comparable), 
  \item $\val(a) + \val(d) < \val(b) + \val(c)$, 
  \item \begin{equation}\label{eqn:lambdaetamixed}
\val(\lambda_{(i,j)}) \geq \val(d) \,\,\mbox{for}\,\, j \geq1,\quad \mbox{and}\,\, \val(\eta_{(i,j)}) \geq \val(a)\,\, \mbox{for}\,\, i \geq 1.
\end{equation}
%{\color{red}(we may need an additional assumption $\val(ad-bc)=\min \{\val(a) + \val(d), \val(b) + \val(c)\}$ to simplify the estimate, which holds for generic coefficients),}
\end{enumerate}
Then $F = (C_{1},C_{2})$ has a unique solution for any $C_{1}$ and $C_{2}$ satisfying $\val(C_{1}) > \val(a) + \epsilon$ and $\val(C_{2}) > \val(d) +\epsilon$.
\end{lemma}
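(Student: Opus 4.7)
The plan is to recast $F(z) = (C_1,C_2)$ as a contraction-style fixed point problem and solve it by Novikov-adic iteration. First I would isolate the linear part $L = \begin{pmatrix} a & b \\ c & d\end{pmatrix}$ and write $F(z) = Lz + N(z)$, where $N(z)$ collects all the higher-order terms $\sum_{v\neq e_1,e_2}\lambda_v z^v$ and $\sum_{v\neq e_1,e_2}\eta_v z^v$. Hypothesis $(1)$ gives invertibility of $L$, and hypothesis $(3)$ shows that $\val(ad-bc)=\val(a)+\val(d)$, so the entries of
\[
L^{-1} = \frac{1}{ad-bc}\begin{pmatrix} d & -b \\ -c & a \end{pmatrix}
\]
satisfy the sharp valuation bounds $\val((L^{-1})_{11}) = -\val(a)$, $\val((L^{-1})_{22}) = -\val(d)$, and $\val((L^{-1})_{12}), \val((L^{-1})_{21})\geq -\min(\val(a),\val(d))$ thanks to $(2)$. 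Solving $F(z)=(C_1,C_2)$ is equivalent to finding a fixed point of $T(z):=L^{-1}((C_1,C_2)-N(z))$.

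Next I would quantify the invariant set. Put $\delta_1:=\val(C_1)-\val(a)>\epsilon$ and $\delta_2:=\val(C_2)-\val(d)>\epsilon$, and define
\[
\mathcal{B} := \bigl\{(z_1,z_2)\in(\Lambda_+)^2 : \val(z_1)\geq \delta_1,\; \val(z_2)\geq \delta_2\bigr\}.
\]
A direct check using the valuation bounds on $L^{-1}$ and the hypothesis $\val(C_1)>\val(a)+\epsilon$, $\val(C_2)>\val(d)+\epsilon$ shows that the constant term $L^{-1}(C_1,C_2)$ already lies in $\mathcal{B}$. The crucial estimate is that $T$ maps $\mathcal{B}$ into itself: for $(z_1,z_2)\in \mathcal B$ and a monomial $\lambda_{(i,j)}z_1^iz_2^j$ in $N_1$, when $j\geq 1$ condition $(4)$ gives $\val(\lambda_{(i,j)}z_1^iz_2^j)\geq \val(d)+i\delta_1+j\delta_2 \geq \val(d)+\delta_2 > \val(d)$; when $j=0$ necessarily $i\geq 2$ and the bound $\val(\lambda_{(i,0)}z_1^i)\geq i\delta_1 \geq 2\delta_1$ beats $\val(a)+\delta_1$. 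A symmetric analysis applies to $N_2$. Multiplying by $L^{-1}$ and using the matrix valuations above, one obtains $\val(T(z)_1 - L^{-1}(C)_1) > \delta_1$ and $\val(T(z)_2 - L^{-1}(C)_2) > \delta_2$.

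For the contraction property I would compare $T(z)-T(z')$ for $z,z'\in\mathcal{B}$: the difference $N(z)-N(z')$ is a $\Lambda$-linear combination of $z_1^iz_2^j - (z_1')^i(z_2')^j$, which factors through $(z_1-z_1')$ or $(z_2-z_2')$ with one extra factor of something of positive valuation. The same bookkeeping as in the previous step, combined with the strict inequality $\val(a)+\val(d)<\val(b)+\val(c)$ that controls the off-diagonal entries of $L^{-1}$, yields $\val((T(z)-T(z'))_i) > \val((z-z')_i)$ coordinate-wise up to a uniform positive shift. Hence $T$ is a strict contraction in the Novikov valuation topology on $\mathcal{B}$, and since $\mathcal{B}$ is complete, a unique fixed point exists.

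The main obstacle will be the careful inequality chasing in the contraction step: the off-diagonal entries of $L^{-1}$ can be large when $\val(b)$ or $\val(c)$ is only slightly above $\val(a),\val(d)$, and one must verify that the extra factor from differentiating $z_1^iz_2^j$ always absorbs this, which is exactly what the hypotheses $(2)$--$(4)$ are tailored to guarantee. Uniqueness beyond $\mathcal{B}$ is automatic: any solution of $F(z)=(C_1,C_2)$ with $z\in (\Lambda_+)^2$ must satisfy $\val(z_i)\geq \delta_i$, since otherwise the linear term $Lz$ would already force $F(z)$ to have smaller valuation than $(C_1,C_2)$ in one of the two coordinates, contradicting the equation.
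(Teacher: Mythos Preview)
Your strategy is the same as the paper's: recast $F=(C_1,C_2)$ as a fixed-point problem and prove a Novikov-adic contraction. The paper first passes to coordinates $(w_1,w_2)=(az_1+bz_2,\,cz_1+dz_2)$ and shows that $G(w):=F(-w)-(C_1,C_2)+w$ contracts on $T^{\val(a)+\epsilon}\Lambda_0\times T^{\val(d)+\epsilon}\Lambda_0$, which is exactly your map $T(z)=L^{-1}((C_1,C_2)-N(z))$ conjugated by $L$. So there is no substantive difference in approach.

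There is, however, a real gap in your choice of invariant region. Your set $\mathcal{B}=\{\val(z_1)\geq\delta_1,\ \val(z_2)\geq\delta_2\}$ with $\delta_i=\val(C_i)-\val(a\text{ or }d)$ need \emph{not} contain $L^{-1}(C_1,C_2)$: the off-diagonal entry $\tilde b=b/(ad-bc)$ mixes the two components, and $\val(\tilde b\,C_2)=\val(b)-\val(a)+\delta_2$ can be strictly smaller than $\delta_1$ whenever $\delta_1\gg\delta_2$ (e.g.\ $\val(a)=\val(d)=1$, $\val(b)=\val(c)=2$, $\val(C_1)=10$, $\val(C_2)=2$ gives $(L^{-1}C)_1$ of valuation $2<9=\delta_1$). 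The fix is to use a \emph{single} threshold for both coordinates, namely $\mathcal{B}'=\{\val(z_1),\val(z_2)\geq\epsilon\}$; this is precisely the domain $(T^\epsilon\Lambda_0)^2$ that the paper uses before its change of variables, and with it all of your estimates go through. A smaller point: the contraction you actually obtain is $\val\bigl((T(z)-T(z'))_i\bigr)\geq\min_j\val((z-z')_j)+\epsilon$, not the coordinate-wise inequality you state, since the off-diagonal entries of $L^{-1}$ couple the two components; this is still a contraction in the $\min$-valuation norm and suffices.
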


\begin{proof}

Since $\lambda_v z^v$, $\eta_v z^v$ are higher order terms, we have 
\[
	\val(\lambda_v z^v) > \min \{\val(az_{1}),\val(b z_{2})\}, \quad  \val(\eta_v z^v) > \min \{\val(cz_{1}),\val(d z_{2})\},
\]
which implies
\begin{equation}\label{eqn:lambdaetageneral}
\val(\lambda_v)   \geq \min \{ \val(a), \val(b) \} = \val(a) >0, \quad \val(\eta_v) \geq \min \{  \val(c), \val(d) \} =\val(d) >0.
\end{equation}
If we consider a restriction of $F$ to $(T^\epsilon \Lambda_0)^2$, then under the given conditions, this restriction maps to $T^{a+\epsilon} \Lambda_0 \times T^{d+\epsilon} \Lambda_0$.

We claim that $F(z_{1},z_{2}) = (C_{1},C_{2})$ has a unique solution for any $C_{1}$ and $C_{2}$ satisfying $\val(C_{1}) > \val(a) + \epsilon$ and $\val(C_{2}) > \val(d) +\epsilon$. 
%{\color{red}(IN PRACTICE: $\val(C_{1})>\val(C_{2}) = \val(a) = \val(b)=\val(c) > \val(d)$)}
To see this, let us first make the following linear coordinate change
\[
	w_{1} := az_{1} + bz_{2},  \quad w_{2} := cz_{1} + dz_{2}
\]
so that
\[
	\val(w_{1}) \geq \min \{\val(z_{1}) + \val(a), \val(z_{2}) + \val( b)\}  \geq \val(a) + \epsilon
\]
and likewise, 
\[
	\val(w_{2}) \geq  \val(d) + \epsilon.
\]
Therefore $(w_{1} (z ), w_{2}(z ))$ defines an isomorphism between 
%$(\Lambda_+)^2$ and $m_{ab} \Lambda_+ \times m_{cd} \Lambda_+$ 
$(T^\epsilon \Lambda_0)^2$ and $T^{a+\epsilon} \Lambda_0 \times T^{d+\epsilon} \Lambda_0$.
In fact, the inverse coordinate change is obviously given as 
\[
	z_{1} = \frac{d w_{1} -b w_{2}}{ ad-bc},\quad z_{2} = \frac{-c w_{1} + a w_{2}}{ ad-bc}
\]
and one can easily check that 
\begin{equation}\label{eqn:invertw}
\val\left( \frac{dw_{1} }{ ad-bc} \right), \,\, \val\left( \frac{bw_{2} }{ad-bc}\right), \,\, \val\left( \frac{cw_{1}}{ ad -bc}\right), \,\, \val\left( \frac{aw_{2}}{ ad-bc}\right) \geq \epsilon,
\end{equation}
and hence $F(w_{1},w_{2})$ (in new coordinates) gives a map from $T^{a+\epsilon} \Lambda_0 \times T^{d+\epsilon} \Lambda_0$ to itself. 
Notice that non-degeneracy of the leading order of $F$ is crucially used here in this coordinate change.

%
%Observe that for $v \neq (1,0), (0,1)$,
%$$ \val(\lambda_v z^v) > \val (\lambda_v) \geq m_{ab}, \quad \val(\eta_v z^v) > \val (\eta_v) \geq m_{cd}$$
%from the hypothesis,
%and hence $F(w_{1},w_{2})$ (in new coordinates) gives a map from $m_{ab} \Lambda_+ \times m_{cd} \Lambda_+$ to itself. 

We want to find a solution for $F(w_{1},w_{2}) = (C_{1},C_{2}) \in T^{a+\epsilon} \Lambda_0 \times T^{d+\epsilon} \Lambda_0$. To this end, we introduce another function $G : T^{a+\epsilon} \Lambda_0 \times T^{d+\epsilon} \Lambda_0 \to T^{a+\epsilon} \Lambda_0 \times T^{d+\epsilon} \Lambda_0$ defined by
\begin{align*}
G(w_{1},w_{2})&:= F(-w_{1},-w_{2}) - (C_{1},C_{2}) + (w_{1},w_{2})\\
&= -(C_{1},C_{2}) + \left(\sum_{v \neq (1,0),(0,1)} \tilde{\lambda}_v w^v, \sum_{v \neq (1,0),(0,1)} \tilde{\eta}_v w^v \right).
\end{align*}
We show that $G$ is a contraction. Since the constant terms play no role, we will assume $(C_{1},C_{2})=(0,0)$ from now on. We will only deal with the first component; the second component can be dealt with similarly.
A typical term appearing in the expansion $\lambda_v z^v$ in $(w_{1},w_{2})$ is of the form
\[
	m(w_{1},w_{2}):=
	\lambda_v 
	\left( \frac{dw_{1}}{ad-bc}\right)^i 
	\left( \frac{bw_{2}}{ad-bc}\right)^j 
	\left( \frac{cw_{1}}{ad-bc}\right)^k 
	\left( \frac{aw_{2}}{ad-bc}\right)^l  
\]
with $i+j+k+l \geq 2$, omitting the binomial coefficients in $\mathbb{Z}$. 
%Notice first that at least one of $i$, $j$, $k$ must be nonzero for any $\lambda_v z^v$ if $\val(\lambda_v) <d$, since otherwise, we would have $\lambda_v z^v=\lambda_{(0,l)} z_{2}^l$ with $\val(\lambda_{(0,l)}) <d$ contradicting the hypothesis.

For simplicity, we put $\tilde{a}:=\frac{a}{ad-bc}$, $\tilde{b}:=\frac{b}{ad-bc}$, $\tilde{c}:=\frac{c}{ad-bc}$ and $\tilde{d}:=\frac{d}{ad-bc}$. Then \eqref{eqn:invertw} translates to
\begin{equation}\label{eqn:winvertnew}
\val(\tilde{c} w_{1}), \,\, \val(\tilde{d} w_{1}), \,\, \val(\tilde{a} w_{2}),  \,\, \val(\tilde{b} w_{2}) \geq \epsilon >0.
\end{equation}

Now, for two elements $(w_{1},w_{2})$ and $(w_{1}',w_{2}')$ in $T^{a+\epsilon} \Lambda_0 \times T^{d+\epsilon} \Lambda_0$, we have
\begin{align*}
 m(w_{1},w_{2}) - m(w_{1}',w_{2}') =& \lambda_v (\tilde{d} w_{1})^i (\tilde{c}^k w_{1})^k (\tilde{b} w_{2})^j (\tilde{a} w_{2})^l - \lambda_v (\tilde{d} w_{1}')^i (\tilde{c}^k w_{1}')^k (\tilde{b} w_{2}')^j (\tilde{a} w_{2}')^l \\
 =& (w_{2}-w_{2}') \sum_\beta \lambda_v (\tilde{d} w_{1})^i (\tilde{c} w_{1})^{k} \cdot  \tilde{b}^j \tilde{a}^l w_{2}^\beta (w_{2}')^{j+l-1-\beta} \\
 & + (w_{1}-w_{1}') \sum_\beta \lambda_v \tilde{d}^i \tilde{c}^k w_{1}^\alpha  (w_{1}')^{i+k-1-\alpha} \cdot  (\tilde{b} w_{2}')^j (\tilde{a} w_{2}')^l
 \end{align*}
Let us estimate the valuation of each of summands in the last equation. We first find a lower bound for the valuation $\nu_{1}$ of the first summand.  
If $j=l=0$, then it is obvious from \eqref{eqn:winvertnew} that 
\begin{equation}\label{eqn:nu1est}
\nu_{1} \geq \val(w_{2} - w_{2}') + \epsilon.
\end{equation}
Suppose now that $j \geq 1$, while $l$ can be possibly zero.
Observe that $\val(\tilde{b}^{i_{1}} \tilde{a}^{i_{2}} w_{2}^{j_{1}} (w_{2}')^{j_{2}}) \geq \epsilon$ as long as $i_{1} + i_{2} \leq j_{1} + j_{2}$ due to \eqref{eqn:winvertnew}. On the other hand, using the given condition on $b$ an \eqref{eqn:lambdaetageneral}, we have
\[
	\val (\lambda_v  \tilde{b} )  \geq \val (a) + \val (\tilde{b}) \geq \val(a) + \val(b) - \val(a) -\val(d) \geq 0.
\]
Hence rewriting the first summand as
\[
	(w_{2} - w_{2}') \left(\lambda_v \tilde{b}\right) 
	(\tilde{d} w_{1})^i (\tilde{c} w_{1})^{k} \cdot  \tilde{b}^{j-1} \tilde{a}^l w_{2}^\beta (w_{2}')^{j+l-1-\beta},
\]
we see that the estimate \eqref{eqn:nu1est} holds for $j \geq 1$.

Finally, if $j =0$ but $l \geq 1$, then $m (w_{1},w_{2})$ should come from $\lambda_v z^v$ with $z^v$ divisible by $z_{2}$. Thus \eqref{eqn:lambdaetamixed} tells us that $\val(\lambda_v) \geq \val(d)$ in this case. A similar argument as in the previous paragraph using $\val(\lambda_v \tilde{a}) \geq 0$ leads to the same inequality \eqref{eqn:nu1est} for $\nu_{2}$. 
The valuation $\nu_{2}$ of the second summand can be estimated in the same way (using $\val(\lambda_v \tilde{d}) \geq 0$ in this case), resulting in:
\[
	\nu_{2} \geq \val(w_{1} - w_{1}') + \epsilon.
\]

In summary, we have
\[
	\val (m(w_{1},w_{2}) - m(w_{1}',w_{2}') ) \geq \min(\nu_{1},\nu_{2}) \geq \min(\val(w_{1}-w_{1}'),\val(w_{2}-w_{2}')) + \epsilon,
\]
and we conclude that
\begin{align*}
||G(w_{1},w_{2}) - G(w_{1}',w_{2}')|| &= e^{- \val( G(w_{1},w_{2}) - G(w_{1}',w_{2}') )} \\
&\leq e^{- \min_m \{\val (m(w_{1},w_{2}) - m(w_{1}',w_{2}') )\}} \\
&\leq e^{-\min(\val(w_{1}-w_{1}'),\val(w_{2}-w_{2}')) - \epsilon} \\
&= e^{-\epsilon} || (w_{1},w_{2}) - (w_{1}',w_{2}')||
 \end{align*}
which is as desired.

\end{proof}

\bibliographystyle{amsalpha}
\bibliography{geometry}
\end{document}